\newcommand{\NOR}{\mathcal{N}}
\newcommand{\ccoef}{{\mathsf r}}
\newcommand{\EE}[1]{\mathbb{E}\!\left[#1\right]}
\newcommand{\COV}[2]{\textnormal{Cov}\left[#1,#2\right]}
\newcommand{\VAR}[1]{\text{Var}\left[#1\right]}
\newcommand{\Conv}{\mathop{\scalebox{1.2}{\raisebox{-0.02ex}{$\ast$}}}}
\newcommand{\CONV}{\mathop{\scalebox{2}{\raisebox{-0.3ex}{$\ast\;$}}}}
\newcommand{\Cov}{\textnormal{Cov}}
\newcommand{\vecoper}{\textnormal{vec}}
\newcommand{\bbZ}{{\Bbb Z}}
\newcommand{\bbR}{{\Bbb R}}
\newcommand{\real}{{\Bbb R}}
\newcommand{\bbN}{{\Bbb N}}
\newcommand{\bbC}{{\Bbb C}}
\newcommand{\signif}{{\mathsf{s}}}
\newcommand{\e}{{\mathrm e}}
\def\va{\makebox{Var}}
\def\cov{\makebox{Cov}}
\newcommand{\diag}{\textnormal{diag}}
\DeclareMathOperator{\var}{{\mathrm{Var}}}
\title{Multivariate Hadamard self-similarity: \\
 testing fractal connectivity}
\begin{document}

\author{Herwig~Wendt$^1$, Gustavo~Didier$^2$, S\'{e}bastien~Combrexelle$^1$, Patrice~Abry$^3$%
}
%
\thanks{P.A.\ and H.W. are supported in part by the French ANR grant MultiFracs. G.D.\ was supported in part by the ARO grant W911NF-14-1-0475. G.D.\ gratefully acknowledges the support of ENS de Lyon for his visits, and thanks Alexandre Belloni for the insightful mathematical discussions.}

\address{$^1$ IRIT-ENSEEIHT, CNRS (UMR 5505), Universit\'{e} de Toulouse, France, {\tt herwig.wendt@irit.fr}}

\address{$^2$ Mathematics Department, Tulane University, New Orleans, USA, {\tt gdidier@tulane.edu}}

\address{$^3$ Universit\'{e} de Lyon, ENS de Lyon, Universit\'{e} Claude Bernard, CNRS,  Laboratoire de Physique, F-69342 Lyon, France, {\tt patrice.abry@ens-lyon.fr}}

\date{}

\maketitle

\begin{abstract}
While scale invariance is commonly observed in each component of real world multivariate signals, it is also often the case that the inter-component correlation structure is not fractally connected, i.e., its scaling behavior is not determined by that of the individual components. To model this situation in a versatile manner, we introduce a class of multivariate Gaussian stochastic processes called Hadamard fractional Brownian motion (HfBm). Its theoretical study sheds light on the issues raised by the joint requirement of entry-wise scaling and departures from fractal connectivity. An asymptotically normal wavelet-based estimator for its scaling parameter, called the Hurst matrix, is proposed, as well as asymptotically valid confidence intervals. The latter are accompanied by original finite sample procedures for computing confidence intervals and testing fractal connectivity from one single and finite size observation. Monte Carlo simulation studies are used to assess the estimation performance as a function of the (finite) sample size, and to quantify the impact of omitting wavelet cross-correlation terms. The simulation studies are shown to validate the use of approximate confidence intervals, together with the significance level and power of the fractal connectivity test. The test performance and properties are further studied as functions of the HfBm parameters.
\vskip2mm
\noindent{\sc Keywords.} Multivariate self-similarity, Hadamard self-similarity, fractal connectivity, wavelet analysis, confidence intervals, hypothesis testing
\end{abstract}


%


\section{Introduction}
\label{sec:intro}

\subsection{Scale Invariance}

The relevance of the paradigm of scale invariance is evidenced by its successful use, over the last few decades, in the analysis of the dynamics in data obtained from a rather diverse spectrum of real world applications. The latter range from natural phenomena -- physics (hydrodynamic turbulence \cite{Mandelbrot1974}, out-of-equilibrium physics), geophysics (rainfalls), biology (body rhythms \cite{Lopes2009}, heart rate \cite{Akselrod1987,Kiyono:2006}, neurosciences and genomics \cite{CIUCIU:2012:A,CIUCIU:2014:A,He2010,He2014}) -- to human activity -- Internet traffic \cite{Abry1998,Abry2002}, finance \cite{mandelbrot99}, urban growth and art investigation \cite{ABRY:2013:B,spmag2008,AbrySPM2015}.

In essence, scale invariance -- also called scaling, or scale-free dynamics -- implies that the phenomenical or phenomenological dynamics are driven by a large continuum of equally important time scales, rather than by a small number of characteristic scales. 
Thus, the investigation's focus is on identifying a relation amongst relevant scales rather than picking out characteristic scales.

Historically, self-similarity was one of the first proposed mathematical frameworks for the modeling of scale invariance (e.g., \cite{Samorodnitsky1994}). A random system is called self-similar when dilated copies of a single signal $X$ are statistically indistinguishable, namely,
\begin{equation}
\label{eq:oss}
 \{X(t)\}_{ t \in \real}  \stackrel{\textnormal{fdd}}{=} \{a^H X(t/a)\}_{t \in \real}, \quad \forall a > 0,
 \end{equation}
 where $\stackrel{\textnormal{fdd}}{=} $ stands for the equality of finite-dimensional distributions. An example of a stochastic process that satisfies the property \eqref{eq:oss} is fractional Brownian motion (fBm). Indeed, the latter is the only self-similar, Gaussian, stationary increment process, and it is the most widely used scaling model for real-world signals \cite{taqqu:2003}.

Starting from \eqref{eq:oss}, the key parameter for quantifying scale-free dynamics is the scaling, or Hurst, exponent $0 <H <1$. The estimation of $H$ is the central task in scaling analysis, and it has received considerable effort and attention in the last three decades (see \cite{Bardet2003a} for a review). The present contribution is about wavelet-based estimation \cite{flandrin:1992,veitch:abry:1999}. It relies on the key scaling property
 \begin{equation}
 \label{equ:wavcoef}
\frac{1}{T} \sum_t T_X^2(a,t) \simeq C a^{\alpha}, \quad \alpha := 2H,
 \end{equation}
where $T_X(a,t)$ is the wavelet coefficient of an underlying self-similar stochastic process and $T$ is the number of available coefficients. In other words, the sample wavelet variance of the stochastic process behaves like a power law with respect to the scale $a$.

 \subsection{Multivariate scaling}

In many modern fields of application such as Internet traffic and neurology, data is collected in the form of multivariate time series. Univariate-like analysis in the spirit of \eqref{equ:wavcoef} -- i.e., independently on each component -- does not account for the information stemming from correlations across components. The classical fBm parametric family, for example, provides at best a model for component-wise scaling, and thus cannot be used as the foundation for a multivariate modeling paradigm.

To model self-similarity in a multivariate setting, a natural extension of fBm, called Operator fractional Brownian motion (OfBm), was recently defined and studied (see \cite{amblard:coeurjolly:2011,didier:pipiras:2011,Coeurjolly_J-F_2013_ESAIM_wamfbm}). An OfBm ${\underline X}$ satisfies the $m$-variate self-similarity relation
\begin{equation}
\label{equ:ssmulti}
 \{\underline{X}(t)\}_{ t \in \real}  \stackrel{\textnormal{fdd}}{=} \{a^{\underline{\underline{H}}} \underline{X}(t/a)\}_{t \in \real}, \quad \forall a > 0,
 \end{equation}
 where the scaling exponent is a $m \times m$ matrix $\underline{\underline{H}} $, and $a^{\underline{\underline{H}}}$ stands for the matrix exponential $\sum^{\infty}_{k=0}(H\log a)^k/k!$. Likewise, the wavelet spectrum of each individual component is not a single power law as in \eqref{equ:wavcoef}; instead, it behaves like a mixture of distinct univariate power laws. In its most general form, OfBm remains scarcely used in applications; recent efforts have tackled many difficulties that arise in the identification of its parameters \cite{didier2015demixing,abry:didier:2016}.

\subsection{Entry-wise multivariate scaling}

We call an OfBm entry-wise scaling when the Hurst parameter is simply a diagonal matrix $\underline{\underline{H}} = \textnormal{diag}(H_1, \ldots, H_m)$. This instance of OfBm has been used in many applications (e.g., \cite{achard2008fractal,CIUCIU:2014:A}) and its estimation is thoroughly studied in \cite{amblard:coeurjolly:2011}. Since $\underline{\underline{H}}$ is diagonal, the relation \eqref{equ:ssmulti} takes the form
 \begin{equation}
\label{equ:ssmultis}
 \{X_1(t), \ldots, X_m(t) \}_{ t \in \real}  \stackrel{\textnormal{fdd}}{=} \{a^{H_1}X_1(t/a), \ldots, a^{H_m}X_m(t/a)\}_{t \in \real}, \quad \forall a > 0,
 \end{equation}
 which is reminiscent of the univariate case. This implies that the extension of \eqref{equ:wavcoef} to all auto- and cross-components of $m$-variate data can be written as
 \begin{equation}
 \label{equ:wavcoefm}
\frac{1}{T}\sum_t T_{X_{q_1}}(a,t)T_{X_{q_2}}(a,t)  \simeq C a^{\alpha_{q_1 q_2}},  \quad \alpha_{q_1 q_2} := H_{q_1}+H_{q_2}, \quad q_1,q_2 = 1,\hdots,m,
 \end{equation}
 where $T$ is as in \eqref{equ:wavcoef}.

\subsection{Fractal connectivity}

Yet, entry-wise scaling OfBm is a restrictive model since the cross-scaling exponents $\alpha_{q_1 q_2}$, $q_1 \neq q_2$, are determined by the auto-scaling exponents $\alpha_{q_1 q_1}$ and $\alpha_{q_2 q_2}$, i.e.,
\begin{equation}\label{e:FC}
\alpha_{q_1 q_2} = H_{q_1}+H_{q_2} = (\alpha_{q_1 q_1} + \alpha_{q_2 q_2})/2.
\end{equation}
In this situation, called \emph{fractal connectivity} \cite{achard2008fractal,Wendt2009icassp,CIUCIU:2014:A}, no additional scaling information can be extracted from the analysis of \textit{cross-components}. However, in real world applications, cross-components are expected to contain information on the dynamics underlying the data, e.g., cross-correlation functions \cite{kristoufek2013mixed,kristoufek2015can}. As an example, recent investigation of multivariate brain dynamics in \cite{CIUCIU:2014:A} produced evidence of departures from fractal connectivity, notably for subjects carrying out prescribed tasks.
This means that there is a clear need for more versatile models than entry-wise scaling OfBm (see also Remark \ref{r:increments} below). The covariance structure of the new model should satisfy the following two requirements:
\begin{enumerate}
\item all auto- and cross-components are (approximately) self-similar;
\item departures from fractal connectivity are allowed, i.e., the exponents of the cross-components are not necessarily determined by the exponents of the corresponding auto-components.
\end{enumerate}
\noindent Hereinafter, a departure from fractal connectivity \eqref{e:FC} on a given covariance structure entry $(q_1,q_2)$ will be quantified by means of the parameter
\begin{equation}\label{e:delta_kn}
\delta_{q_1 q_2} = \frac{\alpha_{q_1 q_1} + \alpha_{q_2 q_2}}{2} - \alpha_{q_1 q_2} \geq 0, \quad q_1, q_2 = 1,\hdots,m,
\end{equation}
where nonnegativeness is a consequence of the Cauchy-Schwarz inequality (see \eqref{e:hq1q2} below). It is clear that $\delta_{q_1 q_2} = 0$ when $q_1 = q_2$.

\subsection{Goals, contributions and outline}

Our contribution comprises four main components. First, we propose a new class of multivariate Gaussian stochastic processes, called \emph{Hadamard fractional Brownian motion} (HfBm), that combines scale-free dynamics and potential departures from fractal connectivity. Moreover, we provide a precise discussion of the issues entailed by the presence of these two properties (Section~\ref{sec:model}). Second, we study the multivariate discrete wavelet transform (DWT) of HfBm, define wavelet-based estimators for the scaling exponents $\alpha_{q_1 q_2} $ and the fractal connectivity parameter $ \delta_{q_1 q_2}$, mathematically establish their asymptotic performance (i.e., asymptotic normality and covariance structure), and computationally quantify finite sample size effects (Section~\ref{sec:wavestim}). Third, starting from a single sample path, we construct approximate confidence intervals for the proposed estimators. The procedure is backed up by the mathematical identification of the approximation orders as a function of the sample path size and the limiting coarse scale. This is further investigated by means of Monte Carlo simulations, as well as by means of a study of the ubiquitous issue of the impact of (partially) neglecting the correlation amongst wavelet coefficients (Section~\ref{sec:MC}). Beyond being of interest in multivariate modeling, the study sheds more light on the same issue for the univariate case. Fourth, we devise an efficient test for the presence of fractal connectivity from a single sample path. In addition, we assess the finite sample performance of the test in terms of targeted risk by means of Monte Carlo simulations (Section~\ref{sec:test}). Finally, routines for the synthesis of HfBm, as well as for estimation, computation of confidence intervals and testing will be made publicly available at time of publication. All proofs can be found in the Appendix.

\section{Hadamard fractional Brownian motion}
\label{sec:model}

For Hadamard fractional Brownian motion, defined next, the fractal connectivity relation \eqref{e:FC} does not necessarily hold.
\begin{definition}\label{def:HfBm}
A Hadamard fractional Brownian motion $B_H = \{B_H(t)\}_{t \in \bbR}$ (HfBm) is a proper, Gaussian (stationary increment) process whose second moments can be written as
\begin{equation}\label{e:Hadamard_harmonizable}
\EE{B_{H}(s)B_{H}(t)^*}
= \int_{\bbR} \Big( \frac{e^{isx}-1}{ix}\Big)\Big( \frac{e^{-itx}-1}{-ix}\Big) f_{H}(x) dx , \quad s,t \in \bbR,
\end{equation}
For $0 < h_{\min}\leq h_{\max} < 1$, the matrix exponent $H = \Big(h_{q_1 q_2}\Big)_{q_1,q_2 = 1,\hdots,m}$ satisfies the conditions
\begin{equation}\label{e:HfBm_exponent_H}
h_{q_1q_2} \in [h_{\min},h_{\max}], \quad q_1, q_2 = 1,\hdots,m.
\end{equation}
The matrix-valued function $f$ is a spectral density of the form
\begin{equation}\label{e:HfBm_specdens}
f_H(x)_{q_1 q_2} = \Big(\rho_{q_1 q_2}\sigma_{q_1}\sigma_{q_2}|x|^{- 2(h_{q_1 q_2} - 1/2)}\Big)g_{q_1 q_2}(x), \quad q_1, q_2=1,\hdots,m,
\end{equation}
i.e., the Hadamard scaling parameters are given by
\begin{equation}\label{e:alphail}
\alpha_{q_1q_2} = 2 h_{q_1 q_2}, \quad q_1,q_2=1,\hdots,m,
\end{equation}%
where $\rho_{q_1 q_2} \in [-1,1]$, $\sigma_{q_1}, \sigma_{q_2} \in\bbR^+$. The real-valued functions $g_{q_1 q_2} \in C^2(\bbR)$ satisfy 
\begin{equation}\label{e:g_is_bounded}
\max_{l=0,1,2}\sup_{x \in \bbR} \Big|\frac{d^l}{dx^l}g_{q_1 q_2}(x)\Big| \leq C,
\end{equation}
\begin{equation}\label{e:g_is_Schwartz}
\Big|\frac{d^l}{dx^l}(g_{q_1 q_2}(x) - 1) \Big| \leq C' |x|^{\varpi_0-l}, \quad x \in (-\varepsilon_0, \varepsilon_0), \quad l = 0,1,2,
\end{equation}
for constants $C, C', \varepsilon_0 > 0$, where
\begin{equation}\label{e:hmax<delta0=<2(1+hmin)}
2 h_{\max} < \varpi_0 \leq  2(1 + h_{\min}).
\end{equation}
\end{definition}

\begin{example}
An HfBm with parameters $h_{q_1 q_2} = (h_{q_1 q_1} + h_{q_2 q_2})/2$ (i.e., fractally connected), $g_{q_1 q_2}(x)\equiv 1$, $q_1, q_2=1,\hdots,m$, is an entry-wise scaling OfBm with diagonal Hurst matrix $H =\textnormal{diag}(h_1,\ldots,h_m)$ (see \cite{amblard:coeurjolly:2011,didier:pipiras:2011,Coeurjolly_J-F_2013_ESAIM_wamfbm}).
\end{example}

By the known properties of spectral densities, $f_H(x) = (f_H(x)_{q_1 q_2})_{q_1,q_2=1,\hdots,m} \in {\mathcal S}_{>0}(m,\bbR)$ (symmetric positive semidefinite) a.e.\ and satisfies
\begin{equation}\label{e:CS}
|f_H(x)_{q_1 q_2}| \leq \sqrt{f_H(x)_{q_1 q_1}}\sqrt{f_H(x)_{q_2 q_2}} \quad dx\textnormal{-a.e.}
\end{equation}
(\cite{brockwell:davis:1991}, p.436). The relation \eqref{e:CS} further implies that
\begin{equation}\label{e:hq1q2}
h_{q_1 q_2} \leq \frac{h_{q_1 q_1} + h_{q_2 q_2}}{2}, \quad q_1, q_2= 1,\hdots,m.
\end{equation}
Whenever convenient we also write
$$
h_{q q} = h_q, \quad \alpha_{qq} = \alpha_q, \quad q = 1,\hdots,m.
$$

The name ``Hadamard" comes from Hadamard (entry-wise) matrix products. If one rewrites HfBm componentwise as $B_H(t) = (B_{H,1}(t),\hdots,B_{H,m}(t))^*$, then the conditions \eqref{e:Hadamard_harmonizable}, \eqref{e:HfBm_exponent_H} and \eqref{e:g_is_Schwartz} yield the asymptotic equivalence
$$
\EE{B_{H,q_1}(cs)B_{H,q_2}(ct)} \sim \frac{\varsigma}{2} \{|cs|^{2h_{q_1 q_2}}+|ct|^{2h_{q_1 q_2}} - |c(s-t)|^{2h_{q_1 q_2}}\}, \quad c \rightarrow \infty,
$$
$q_1, q_2 = 1,\hdots,m$, for some $\varsigma \in \bbR$.
In other words, over large scales, the covariance between each pair of entries of an HfBm approaches that of a univariate fBm, up to a change of sign (see also Proposition \ref{p:4th_moments_wavecoef}, ($iii$)). In this sense, for large $c$, an HfBm behaves like its ideal counterpart $B_{H,\textnormal{ideal}} = \{B_{H,\textnormal{ideal}}(t)\}_{t \in \bbR}$, defined as a generally non-existent stochastic process satisfying the also ideal Hadamard (entry-wise) self-similarity relation
\begin{equation}\label{e:Hadamard_ss}
\EE{B_{H,\textnormal{ideal}}(cs)B_{H,\textnormal{ideal}}(ct)^*} = c^{\circ H} \circ \EE{B_{H,\textnormal{ideal}}(s)B_{H,\textnormal{ideal}}(t)^*}, \quad c > 0,
\end{equation}
where $\circ$ denotes the Hadamard (entry-wise) matrix product and $c^{\circ H} := \Big( c^{h_{q_1 q_2}} \Big)_{q_1, q_2=1,\hdots,m}$. The process $B_{H,\textnormal{ideal}}$ can be viewed as a heuristic tool for developing intuition on multivariate self-similarity. Mathematically, though, it can only exist in fractally connected instances, the reason being that distinct (spectral) power laws cross over. Indeed, we must have $\alpha_{q_1 q_2} \leq \frac{\alpha_{q_1 q_1} + \alpha_{q_2 q_2}}{2}$ for $x$ close to 0 and $\alpha_{q_1 q_2} \geq \frac{\alpha_{q_1 q_1} + \alpha_{q_2 q_2}}{2}$ for large $|x|$, whence $\alpha_{q_1 q_2} \equiv \frac{\alpha_{q_1 q_1} + \alpha_{q_2 q_2}}{2}$.
This shows that HfBm is a perturbation of its virtual counterpart, where the regularization functions $g_{q_1 q_2}(x) $ in \eqref{e:HfBm_specdens} introduce high-frequency corrections.

\begin{example}
\label{ex:A}
An illustrative subclass of HfBm is obtained by setting $g_{qq}(x) \equiv 1 = \sigma_{q}$, $q=1,\hdots,m$, and $g_{q_1 q_2}(x) = e^{-x^2}$, $q_1 \neq q_2$. Note that $g_{q_1 q_2}(\cdot)$ satisfies \eqref{e:hmax<delta0=<2(1+hmin)} with $\varpi_0 = 2$. In this case, the expression for the main diagonal spectral entry of an HfBm is identical to that of an ideal-HfBM, and the difference lies on the off-diagonal entries:
\begin{equation}\label{e:regHfBm_condition_on_the_det}
\rho_{q_1q_2}^{2} |x|^{-2\alpha_{q_1q_2}} e^{-x^2} \leq \rho_{q_1q_1}\rho_{q_2q_2}|x|^{-(\alpha_{q_1q_1}+\alpha_{q_2q_2})} \quad dx\textnormal{-a.e.}
\end{equation}
In this case, each individual entry $\{B_H(t)_{q}\}_{t \in \bbR}$, $q=1,\hdots,m$, of HfBm $B_H$ is by itself a fBm with Hurst parameter $0 < h_q < 1$. In particular,
$$
\{B_H(ct)_{q}\}_{t \in \bbR} \stackrel{\textnormal{fdd}}= \{c^{h_q}B_H(t)_{q}\}_{t \in \bbR}, \quad c > 0, \quad q = 1,\ldots,m.
$$
However, it is generally \textit{not} true that
$$
\{B_H(ct)\}_{t \in \bbR} \stackrel{\textnormal{fdd}}= \{\textnormal{diag}(c^{h_1},\ldots,c^{h_q},\ldots,c^{h_m})B_H(t) \}_{t \in \bbR}, \quad c > 0.
$$
Otherwise, $B_H$ would necessarily be fractally connected.
\end{example}

\begin{remark}\label{r:CME_ideal_HfBm}
When simulating HfBm via Circulant Matrix Embedding, one verifies that regularization is rarely necessary for ensuring the positive definiteness of the covariance matrix for finite sample sizes. In other words, ideal-HfBM is also a useful approximation in practice.
\end{remark}

\begin{remark}\label{r:increments}
Let $Y_H(t) = B_{H}(t) - B_H(t-1)$, where $\{B_{H}(t)\}_{t \in \bbR}$ is an HfBm. Then, $\{Y_H(t)\}_{t \in \bbZ}$ is a (discrete time) stationary process with spectral density
$$
f_{Y_H}(x) = 2(1 - \cos(x))\sum^{\infty}_{k = -\infty}\frac{f_H(x + 2 \pi k)}{|x + 2 \pi k|^2}, \quad x \in (-\pi,\pi],
$$
where $f_H$ is the HfBm spectral density \eqref{e:Hadamard_harmonizable}. Under \eqref{e:g_is_bounded}, \eqref{e:g_is_Schwartz} and \eqref{e:hmax<delta0=<2(1+hmin)}, we obtain the entry-wise limiting behavior
$$
f_{Y_H}(x)_{q_1 q_2}(x) \sim \rho_{q_1 q_2 } \sigma_{q_1}\sigma_{q_2}|x|^{-2(h_{q_1 q_2}-1/2)}, \quad x \rightarrow 0^{+}, \quad q_1,q_2 = 1,\hdots,m.
$$
In particular, $\{Y_H(t)\}_{t \in \bbZ}$ does not fall within the scope of the usual definitions of multivariate scaling behavior or long range dependence \cite{lobato:1999,robinson:2008,nielsen:2011,kechagias:pipiras:2015}, which are restricted to fractally connected processes.
\end{remark}

\section{Wavelet-based analysis of HfBm}
\label{sec:wavestim}

In this section, we construct the wavelet analysis and estimation for HfBm. Due to the mathematical convenience of the notion of Hadamard (approximate) self-similarity, most of the properties of wavelet-based constructs resemble their univariate analogues.

\subsection{Multivariate discrete wavelet transform}
\label{sec:wavcoef}

Throughout the rest of the paper, we will make the following assumptions on the underlying wavelet basis. Such assumptions will be omitted in the statements.

\medskip

\noindent {\sc Assumption $(W1)$}: $\psi \in L^1(\bbR)$ is a wavelet function, namely,
\begin{equation}\label{e:N_psi}
\int_{\bbR} \psi^2(t)dt = 1 , \quad \int_{\bbR} t^{q}\psi(t)dt = 0, \quad q = 0,1,\hdots, N_{\psi}-1, \quad N_{\psi} \geq 2.
\end{equation}
\noindent {\sc Assumption ($W2$)}: the functions $\varphi$ (a bounded scaling function) and $\psi$ correspond to
\begin{equation}\label{e:W2}
\textnormal{a MRA of $L^2(\bbR)$, and $\textnormal{supp}(\varphi)$ and $\textnormal{supp}(\psi)$ are compact intervals.}
\end{equation}
\noindent {\sc Assumption $(W3)$}: for some $\beta > 1$,
$$
\widehat{\psi} \in C^2(\bbR)
$$
and
\begin{equation}\label{e:psihat_is_slower_than_a_power_function}
\sup_{x \in \bbR} |\widehat{\psi}(x)| (1 + |x|)^{\beta} < \infty.
\end{equation}

\medskip

\noindent Under \eqref{e:N_psi}--\eqref{e:psihat_is_slower_than_a_power_function}, $\psi$ is continuous, $\widehat{\psi}(x)$ is everywhere differentiable and
\begin{equation}\label{e:psihat_deriv=0}
\widehat{\psi}^{(l)}(0) = 0, \quad l = 0,\hdots,N_{\psi}-1
\end{equation}
(see \cite{Mallat1998}, Theorem 6.1 and the proof of Theorem 7.4).

\begin{definition}
Let $B_H = \{B_H(t)\}_{t \in \bbR} \in \bbR^m$ be an HfBm. For a scale parameter $j \in \bbN$ and a shift parameter $k \in \bbZ$, its ($L^1$-normalized) wavelet transform is defined by
\begin{equation}\label{e:D(j,k)}
D(2^j,k) = 2^{-j/2}\int_{\bbR}2^{-j/2}\psi(2^{-j}t - k) B_H(t) dt
=: \Big(d_q(j,k) \Big)_{q=1,\hdots,m}.
\end{equation}
\end{definition}
Under \eqref{e:N_psi}-\eqref{e:psihat_is_slower_than_a_power_function} and the continuity of the covariance function \eqref{e:Hadamard_harmonizable}, the wavelet transform \eqref{e:D(j,k)} is well-defined in the mean-square sense and ${\Bbb E}D(2^j,k) = 0$, $k \in \bbZ$, $j \in \bbN$ (see \cite{cramer:leadbetter:1967}, p.\ 86).

\subsection{Multivariate wavelet spectrum}
\label{sec:wavsp}
Fix $j_1 \leq j_2$, $j_1,j_2 \in \bbN$. Because of the approximate nature of Hadamard self-similarity, analysis and estimation must be considered in the coarse scale limit, by means of a sequence of dyadic numbers $\{a(n)\}_{n \in \bbN}$ satisfying
\begin{equation}\label{e:assumption_a(n)_n}
1 \leq \frac{n}{a(n)2^{j_2}} \leq \frac{n}{a(n)2^{j_1}} \leq n, \quad \frac{a(n)^{4 (h_{\max} - h_{\min})+1}}{n} + \frac{n}{a(n)^{1 + 2 \varpi_0}}\rightarrow 0, \quad n \rightarrow \infty.
\end{equation}

\begin{example}
An example of a scaling sequence satisfying \eqref{e:assumption_a(n)_n} for large enough $n$ is
$$
a(n) := 2 \lfloor n^{\frac{\eta}{4 (h_{\max} -  h_{\min})+1}} \rfloor, \quad \frac{4(h_{\max} - h_{\min})+1}{1 + 2 \varpi_0} < \eta < 1.
$$
In other words, a wide parameter range $[h_{\min},h_{\max}]$ or a low regularity parameter value $\varpi_0$ implies that $a(n)$ must grow slowly by comparison to $n$.
\end{example}
\begin{remark}
For a fixed octave range $\{j_1,\hdots,j_2\}$ associated with an initial scaling factor value $a(n_0) =1$ and sample size $n_0$, define the scale range $\{2^{j_1(n)},\hdots,2^{j_2(n)}\} = \{a(n)2^{j_1}, \hdots,a(n)2^{j_2}\}$ for a general sample size $n$ (where $a(n)$ is assumed dyadic). Then, under \eqref{e:assumption_a(n)_n}, for every $n$ the range of useful octaves is constant and given by $j_2(n) - j_1(n) = j_2 - j_1$, where the new octaves are $j_l(n) = \log_2 a(n) + j_l \in \bbN$, $l = 1,2$.
\end{remark}

\begin{definition}
Let $B_H = \{B_H(t)\}_{t \in \bbR}$ be an HfBm. Let $\{D(a(n)2^j,k)\}_{k = 1,\hdots,n_j,\hspace{0.5mm}j = j_1 , \hdots, j_2}$ be its wavelet coefficients, where
$$
n_{a,j}= \frac{n}{a(n)2^j}
$$
is the number of wavelet coefficients $D(a(n)2^j,\cdot)$ at scale $a(n)2^j$, of a total of $n$ (wavelet) data points. The wavelet variance and the sample wavelet variance, respectively, at scale $a(n)2^j$ are defined by
\begin{equation}\label{e:Sn(j)}
\EE{S_{n}(a(n)2^j)}, \quad S_{n}(a(n)2^j) = \sum^{n_{a,j}}_{k=1}\frac{D(a(n)2^j,k)D(a(n)2^j,k)^*}{n_{a,j}}, \quad j = j_1, \hdots, j_2.
\end{equation}
Let
\begin{equation}\label{e:rho^(q1q2)}
\varrho^{(q_1q_2)}(a(n)2^j) = \EE{d_{q_1}(a(n)2^j,0)d_{q_2}(a(n)2^j,0)}.
\end{equation}
The standardized counterparts of \eqref{e:Sn(j)} are
\begin{equation}\label{e:Wn(j)}
\EE{W_{n}(a(n)2^j)}, \quad W_{n}(a(n)2^j) =  \varrho(a(n)2^j)^{\circ -1} \circ \sum^{n_{a,j}}_{k=1} \frac{D(a(n)2^j,k)D(a(n)2^j,k)^*}{ n_{a,j} },
\end{equation}
$j = j_1, \hdots, j_2$, where $\varrho(a(n)2^j) = \Big(\varrho^{(q_1q_2)}(a(n)2^j)\Big)_{q_1,q_2=1,\hdots,m}$. Entry-wise, for $1 \leq q_1, q_2 \leq m$,
\begin{equation}\label{e:Sq1q2}
S^{(q_1q_2)}_{n}(a(n)2^j) =\sum_{k=1}^{n_{a,j}}\frac{d_{q_1}(a(n)2^j,k)d_{q_2}(a(n)2^j,k)}{n_{a,j}}, \quad W^{(q_1 q_2)}_{n}(a(n)2^j) = \frac{S^{(q_1q_2)}_{n}(a(n)2^j)}{\varrho^{(q_1q_2)}(a(n)2^j)}.
\end{equation}
\end{definition}

Proposition~\ref{p:4th_moments_wavecoef}, stated next, provides basic results on the moments and asymptotic distribution, in the coarse scale limit $a(n)2^j \rightarrow \infty$, of the wavelet transform \eqref{e:D(j,k)} and variance \eqref{e:Wn(j)}. In particular, in regard to the limits in distribution, the vector of random matrices $\{S_{n}(a(n)2^j)\}_{j=j_1,\hdots,j_2}$ can be intuitively interpreted as an asymptotically unbiased and Gaussian estimator of its population counterpart $\{\EE{S_{n}(a(n)2^j,k)}\}_{j=j_1,\hdots,j_2}$. The celebrated decorrelation property of the wavelet transform (e.g., \cite{bardet:2002}, Proposition II.2) lies behind the Gaussian limits in the proposition, as well as of the fact that the random matrices $S_{n}(a(n)2^j)$ have weak inter-component, intra-scale and inter-scale correlations. Also, each entry $S^{(q_1 q_2)}_{n}(a(n)2^j)$, $q_1,q_2 = 1,\hdots, m$, displays asymptotic power law scaling. In Section~\ref{sec:aci} below, these properties are used to define estimators of the scaling exponents and to analytically establish their asymptotic performance.

In the statement of Proposition~\ref{p:4th_moments_wavecoef}, we make use of the operator
\begin{equation}\label{e:vec_S}
\textnormal{vec}_{{\mathcal S}}S = (s_{11}, s_{12},\hdots, s_{1m}; s_{22},\hdots, s_{2m}; \hdots;s_{m-1,m-1}, s_{m-1,m};s_{m,m})^*.
\end{equation}
In other words, $\textnormal{vec}_{{\mathcal S}}\,\cdot$ vectorizes the upper triangular entries of the symmetric matrix $S$.
\begin{proposition}\label{p:4th_moments_wavecoef}
Let $B_{H} = \{B_H(t)\}_{t \in \bbR}$ be an HfBm. Consider $j, j' \in \bbN$ and $k,k' \in \bbZ$. Then,
\begin{itemize}
\item [($i$)] for fixed $j$, $j'$, $k$, $k'$, we can write
\begin{equation}\label{e:Phij,j'}
\EE{D(a(n)2^j,k)D(a(n)2^{j'},k')^*} = \Big( \Xi^{jj',a}_{q_1 q_2}(a(n)(2^jk-2^{j'}k'))\Big)_{q_1,q_2=1,\hdots,m}
\end{equation}
for some matrix-valued function $\Xi^{jj',a}(\cdot)$ that depends on $a(n)$. In particular, for fixed scales $j$, $j'$, $\{D(a(n)2^j,k)\}_{k \in \bbZ}$ is a stationary stochastic process;
\item [($ii$)] for $q_1, q_2 = 1,\hdots,m$,
\begin{equation}\label{e:varrhoa,j_neq_0}
\varrho^{(q_1 q_2)}(a(n)2^j) = \Xi^{jj,a}_{q_1 q_2}(0) \neq 0, \quad j \in \bbN,
\end{equation}
i.e., \eqref{e:Wn(j)} is well-defined;
\item [($iii$)] for $\Xi^{jj',a}(\cdot)$ as in \eqref{e:Phij,j'}, $q_1, q_2 = 1,\hdots,m$, and $z \in \bbZ$, as $n \rightarrow \infty$,
\begin{equation}\label{e:Phi(j,j')q1q2(z)}
\frac{\Xi^{jj',a}_{q_1 q_2}(a(n)z)}{a(n)^{2h_{q_1 q_2}}} \rightarrow \Phi^{jj'}_{q_1 q_2}(z)= \rho_{q_1q_2} \sigma_{q_1}\sigma_{q_2}\hspace{0.5mm}\int_{\bbR}e^{i z x}|x|^{-(2h_{q_1q_2}+1)} \widehat{\psi}(2^jx) \overline{\widehat{\psi}(2^{j'}x)}dx.
\end{equation}
In particular, the wavelet spectrum ${\Bbb E}S_n(a(n)2^j)$ (see \eqref{e:Sn(j)}) can be approximated by that of an ideal-HfBM (see \eqref{e:Hadamard_ss}) in the sense that
\begin{equation}\label{e:ED(j,k)D(j,k)*=scaling}
a(n)^{\circ \hspace{0.5mm}-2H} \circ \EE{S_n(a(n)2^j)} \rightarrow \EE{ S_{\textnormal{ideal}}(2^j)} = \Big(\Phi^{jj}_{q_1 q_2}(0)\Big)_{q_1,q_2 = 1,\hdots,m};
\end{equation}
\item[($iv$)] 
for $\Xi^{jj'}$ as in \eqref{e:Phij,j'} and $q_1,q_2 = 1,\hdots,m$, as $n \rightarrow \infty$,
$$
\frac{1}{\sqrt{n_{a,j}}}\frac{1}{\sqrt{n_{a,j'}}}\sum^{n_{a,j}}_{k=1}\sum^{n_{a,j'}}_{k'=1}
\frac{\Xi^{jj',a}_{q_1 q_2}(a(n)(2^jk-2^{j'}k')) }{a(n)^{2 h_{q_1 q_2}} }
$$
\begin{equation}\label{e:limiting_kron}
\rightarrow 2^{-\frac{(j+j')}{2}}\textnormal{gcd}(2^j,2^{j'})\sum^{\infty}_{z= - \infty} \Phi^{jj'}_{q_1 q_2}(z \textnormal{gcd}(2^j,2^{j'})) ;
\end{equation}
where $\Phi^{jj'}_{q_1 q_2}(\cdot)$ is given by \eqref{e:Phi(j,j')q1q2(z)};
\item[(v)] 
for $1 \leq q_1 \leq q_2 \leq m$, $1 \leq q_3 \leq q_4 \leq m$, as $n \rightarrow \infty$,
\begin{equation}\label{e:Gjj'}
\frac{\sqrt{n_{a,j}}}{a(n)^{\delta_{q_1 q_2}}}\frac{\sqrt{n_{a,j'}}}{a(n)^{\delta_{q_3 q_4}}}\hspace{1mm} \textnormal{Cov}\Big[W^{(q_1 q_2)}_{n}(a(n)2^j),W^{(q_3 q_4)}_{n}(a(n)2^{j'})\Big] \rightarrow G^{jj'}(q_1, q_2, q_3, q_4),
\end{equation}
for $\delta_{\cdot \cdot}$ as in \eqref{e:delta_kn}, where
$$
\frac{\Phi^{jj}_{q_1 q_2}(0)\Phi^{j'j'}_{q_3 q_4}(0)}{2^{-\frac{(j+j')}{2}}}G^{jj'}(q_1, q_2, q_3, q_4)
$$
$$
= \left\{\begin{array}{cc}
\phi^{jj'}(q_1, q_3, q_2, q_4), & \textnormal{if }\delta_{q_1 q_3} = 0 = \delta_{q_2 q_4} \\
                                & \textnormal{and } (\delta_{q_1 q_4} > 0 \textnormal{ or } \delta_{q_2 q_3} >0 );\\
\phi^{jj'}(q_1, q_4, q_2, q_3), & \textnormal{if }(\delta_{q_1 q_3} > 0 \textnormal{ or } \delta_{q_2 q_4} > 0 ) \\
                                & \textnormal{ and } \delta_{q_1 q_4} = 0 = \delta_{q_2 q_3} ;\\
\phi^{jj'}(q_1, q_3, q_2, q_4)+ \phi^{jj'}(q_1, q_4, q_2, q_3), & \textnormal{if }\delta_{q_1 q_3} = \delta_{q_2 q_4} = \delta_{q_1 q_4} = \delta_{q_2 q_3} = 0;\\
0, & \textnormal{otherwise},\\
\end{array}\right.
$$
and
\begin{equation}\label{e:phi^jj'(q1,q2,q3,q4)}
\phi^{jj'}(q_1, q_2, q_3, q_4) := \textnormal{gcd}(2^j,2^{j'}) \sum^{\infty}_{z = -\infty}\Phi^{jj'}_{q_1 q_2}(z \textnormal{gcd}(2^{j},2^{j'}))
\Phi^{jj'}_{q_3 q_4}(z \textnormal{gcd}(2^{j},2^{j'}));
\end{equation}
\item [(vi)] as $n \rightarrow \infty$,
\begin{equation}\label{e:clt_wavelet_coef}
\Big(\textnormal{vec}_{{\mathcal S}}\Big[\Big(\frac{\sqrt{n_{a,j}}}{a(n)^{\delta_{q_1 q_2}}} \Big)_{q_1,q_2 = 1,\hdots,m} \circ  (W_{n}(a(n)2^j) - {\mathbf 1})\Big]\Big)_{j=j_1,\hdots,j_2}
\stackrel{d}\rightarrow \NOR_{\frac{m(m+1)}{2} \times J}(0,G),
\end{equation}
where
\begin{equation}\label{e:J=j2-j1+1}
J = j_2 - j_1 + 1,
\end{equation}
and ${\mathbf 1}$ is a vector of ones. Each entry of the asymptotic covariance matrix $G$ in \eqref{e:clt_wavelet_coef} is given by the terms $G^{jj'}(q_1,q_2,q_3,q_4)$ in \eqref{e:Gjj'} for appropriate values of $j$, $j'$, $q_1$, $q_2$, $q_3$ and $q_4$.
\end{itemize}
\end{proposition}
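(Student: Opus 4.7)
The proof proceeds by treating the six claims in order, with the spectral representation \eqref{e:Hadamard_harmonizable} as the workhorse throughout. For $(i)$, plugging \eqref{e:Hadamard_harmonizable} into the definition \eqref{e:D(j,k)} of the wavelet coefficient and interchanging expectation with the wavelet integral (justified by Fubini and \eqref{e:psihat_is_slower_than_a_power_function}) yields, after straightforward computation using the vanishing moments \eqref{e:psihat_deriv=0} to cancel the $1/(ix)$ singularity at the origin,
\begin{equation*}
\EE{D(a(n)2^j,k)D(a(n)2^{j'},k')^*} = a(n)\int_{\bbR} e^{i x a(n)(2^j k - 2^{j'} k')}\, 2^{(j+j')/2}\,\widehat{\psi}(a(n)2^j x)\overline{\widehat{\psi}(a(n)2^{j'} x)}\, f_H(x)\, dx,
\end{equation*}
which depends on $k,k'$ only through $2^j k - 2^{j'} k'$, proving $(i)$. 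Claim $(ii)$ is immediate from the fact that the spectral density of the individual component $B_{H,q}$ is nonzero a.e.\ and $\widehat{\psi}$ is non-vanishing on a set of positive measure.

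For $(iii)$, the plan is to change variables $x \mapsto x/a(n)$ in the integral expression just derived. The modulus $|x|^{-(2h_{q_1 q_2} - 1)}$ inside $f_H$ produces the factor $a(n)^{2h_{q_1 q_2}}$, while the regularization $g_{q_1 q_2}(x/a(n)) \to 1$ by \eqref{e:g_is_Schwartz}. Dominated convergence -- justified by \eqref{e:g_is_bounded}, \eqref{e:psihat_is_slower_than_a_power_function}, and the control \eqref{e:psihat_deriv=0} which gives $|\widehat{\psi}(x)| \lesssim |x|^{N_\psi} \wedge |x|^{-\beta}$ near $0$ and $\infty$ respectively -- delivers the limit \eqref{e:Phi(j,j')q1q2(z)}. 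For $(iv)$, I would factor out the change of index $k - (2^{j'-j})k'$ (when $j \geq j'$, using the gcd structure) to reduce the double sum to a single sum weighted by $\min(n_{a,j},n_{a,j'})/\sqrt{n_{a,j} n_{a,j'}} = 2^{-|j-j'|/2}$, then apply $(iii)$ termwise. The error coming from the approximation $\Xi^{jj',a}/a(n)^{2h_{q_1 q_2}} \to \Phi^{jj'}_{q_1 q_2}$ must be controlled uniformly in $z$; this is where the regularity condition \eqref{e:hmax<delta0=<2(1+hmin)} combined with the second condition in \eqref{e:assumption_a(n)_n} enters, ensuring that the deterministic bias is negligible.

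For $(v)$, since $B_H$ is Gaussian and the wavelet coefficients are centered Gaussian linear functionals, the Isserlis (Wick) formula yields
\begin{equation*}
\Cov\bigl[d_{q_1}(j,k)d_{q_2}(j,k),\, d_{q_3}(j',k')d_{q_4}(j',k')\bigr] = \Xi^{jj',a}_{q_1 q_3}(\cdot)\Xi^{jj',a}_{q_2 q_4}(\cdot) + \Xi^{jj',a}_{q_1 q_4}(\cdot)\Xi^{jj',a}_{q_2 q_3}(\cdot).
\end{equation*}
After dividing by $\varrho^{(q_1q_2)}\varrho^{(q_3q_4)}$ and the scaling factors, each product contributes $a(n)^{2(h_{q_1q_3}+h_{q_2q_4})}/a(n)^{2(h_{q_1q_2}+h_{q_3q_4})} = a(n)^{-2(\delta_{q_1q_3}+\delta_{q_2q_4})}$ (and analogously for the swapped pairing), which explains the four-case dichotomy: only pairings with $\delta = 0$ survive in the limit, while those with some $\delta > 0$ vanish. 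Combining this with $(iv)$ produces the formula for $G^{jj'}$. The main obstacle here, and the only truly delicate step, is ensuring that the $o(1)$ remainder in the convergence $(iv)$, when summed over all four index quadruples and all octave pairs, does not blow up; this requires the uniform-in-$z$ control from $(iii)$ together with the summability of $\Phi^{jj'}_{q_1q_2}(z)$ in $z$, which follows from the regularity of $\widehat{\psi}$ and Plancherel.

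Finally for $(vi)$, the random variables under consideration are (after standardization) quadratic forms in a centered stationary Gaussian vector process, so a CLT applies via the Breuer-Major theorem or via the method of cumulants (each cumulant of order $\geq 3$ is a sum of products of $\Xi^{jj',a}$ evaluated on cyclic index patterns, and a straightforward counting argument shows these vanish after normalization by $\sqrt{n_{a,j}}$). The limiting covariance is precisely $G$ from $(v)$. Convergence of the vectorized form \eqref{e:clt_wavelet_coef} follows from the Cramér-Wold device combined with the joint CLT across the finite octave range $j_1, \ldots, j_2$. The restriction to the upper triangular entries via $\textnormal{vec}_{\mathcal S}$ is harmless since $W_n$ is symmetric.
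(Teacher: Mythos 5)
Your overall architecture matches the paper's: the harmonizable representation for $(i)$--$(iii)$, a gcd/lattice reduction of the double sum for $(iv)$, Isserlis plus the $\delta$-exponent bookkeeping for $(v)$, and Cram\'{e}r--Wold with a quadratic-form CLT for $(vi)$. Two points need attention.

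First, a bookkeeping error in $(i)$ that propagates to $(iii)$: the kernel $\big(\frac{e^{isx}-1}{ix}\big)\big(\frac{e^{-itx}-1}{-ix}\big)$ contributes a factor $|x|^{-2}$ to the integrand which is \emph{not} cancelled by the vanishing moments (those only guarantee integrability near the origin, since $\widehat{\psi}(x)=O(|x|^{N_\psi})$ there). The correct formula is $\Xi^{jj',a}_{q_1q_2}(a(2^jk-2^{j'}k'))=\int_{\bbR}e^{ia(2^jk-2^{j'}k')x}\,|x|^{-2(h_{q_1q_2}+1/2)}\rho_{q_1q_2}\sigma_{q_1}\sigma_{q_2}g_{q_1q_2}(x)\,\widehat{\psi}(a2^jx)\overline{\widehat{\psi}(a2^{j'}x)}\,dx$, with no prefactor $a(n)2^{(j+j')/2}$; the substitution $x\mapsto y/a$ then yields exactly $a^{2h_{q_1q_2}}$ and the limit integrand $|y|^{-(2h_{q_1q_2}+1)}$ of \eqref{e:Phi(j,j')q1q2(z)}. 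With your stated formula ($f_H$ alone, exponent $-(2h_{q_1q_2}-1)$, plus the spurious $a(n)$ prefactor) the same substitution produces $a^{2h_{q_1q_2}-1}$, which is inconsistent with the target.

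Second, and more seriously, the plan for $(iv)$ does not close as written. Controlling the error $\Xi^{jj',a}_{q_1q_2}(az)/a^{2h_{q_1q_2}}-\Phi^{jj'}_{q_1q_2}(z)$ \emph{uniformly in $z$} gives at best $O(a^{-\varpi_0})$ per term (this is the content of Lemma \ref{l:cov_and_eigenstructure_wavelet_transf_and_var}, $(ii)$); after the gcd reduction there remain on the order of $n_*\sim n/(a2^{j+j'})$ effective lattice points, so the accumulated error is $O(n/a^{1+\varpi_0})$, and condition \eqref{e:assumption_a(n)_n} only guarantees $n/a^{1+2\varpi_0}\to0$, which does not force $n/a^{1+\varpi_0}\to 0$. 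What the paper actually uses is a bound that \emph{decays in the lag} and is uniform in $n$, namely $|\Xi^{jj',a}_{q_1q_2}(ar)/a^{2h_{q_1q_2}}-\Phi^{jj'}_{q_1q_2}(r)|\leq C/r^2$ for $r\neq0$, obtained by integrating by parts twice after verifying that the relevant integrand $h_a$ and its first derivative vanish at the origin and at infinity (this is precisely where $2h_{\max}<\varpi_0$ and $N_\psi\geq2$ enter). Combined with the pointwise convergence to zero at each fixed lag and dominated convergence over the lattice $\gcd(2^j,2^{j'})\bbZ$, this yields \eqref{e:summation_deviation_HfBm_idealHfBm}; your sketch needs this summable-in-$z$ estimate to make $(iv)$, and hence $(v)$, rigorous. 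A minor additional remark on $(vi)$: Breuer--Major in its standard form applies to a fixed stationary sequence, whereas here the covariance structure is a triangular array depending on $n$ through $a(n)$; the cumulant route, or the paper's eigenvalue/Lindeberg argument for the weighted-$\chi^2$ representation, is what actually works, and it again rests on the same lag summability together with the eigenvalue bound \eqref{e:max_eig=<a(n)^(2maxh1,h2-2h12)}.
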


\begin{remark}
Note that, up to a change of sign, each entry $\Phi^{jj'}_{q_1 q_2}(z)$ on the right-hand side of \eqref{e:Phi(j,j')q1q2(z)} corresponds to the covariance between the wavelet coefficients at octaves $j$ and $j'$ of a fBm with parameter $h_{q_1q_2}$.
\end{remark}

\begin{remark}
In Proposition \ref{p:4th_moments_wavecoef}, $(v)$, the asymptotic covariance between same-entry wavelet variance terms is always nontrivial, irrespective of the values of fractal connectivity parameters. For example, when $(1,2) =: (q_1,q_2) = (q_3,q_4) $, it is given by
$$
G^{jj'}(1, 2, 1, 2) = \frac{2^{-\frac{(j+j')}{2}}}{\Phi^{jj}_{1 2}(0)\Phi^{j'j'}_{12}(0)}\cdot
\left\{\begin{array}{cc}
\phi^{jj'}(1, 1, 2, 2), & \delta_{1 2} > 0;\\
\phi^{jj'}(1, 1, 2, 2) + \phi^{jj'}(1, 2, 2, 1), & \delta_{1 2} = 0.
\end{array}\right.
$$
This is not true for terms associated with different pairs of indices. For example, if $(q_1,q_2) = (1,1) \neq (q_3,q_4) = (2,2)$, then
$$
G^{jj'}(1, 1, 2, 2) = \frac{2^{-\frac{(j+j')}{2}}}{\Phi^{jj}_{1 1}(0)\Phi^{j'j'}_{22}(0)}\cdot
\left\{\begin{array}{cc}
2\phi^{jj'}(1, 2, 1, 2), & \delta_{1 2} = 0;\\
0, & \delta_{1 2} > 0.\\
\end{array}\right.
$$
In other words, the phenomenon of the asymptotic decorrelation of wavelet variance terms is only observed for instances involving departures from fractal connectivity.
\end{remark}

\subsection{Estimation of the scaling exponents}
\label{sec:aci}

\subsubsection{Definition of the estimators}

As in the univariate case, the fact that the wavelet variance \eqref{e:Sn(j)} satisfies the Hadamard scaling relation in the coarse scale limit (see \eqref{e:ED(j,k)D(j,k)*=scaling}) points to the development of a log-regression regression method based on the sample wavelet variance across scales.

Estimators can be defined in a standard way by means of the log-regression relations
$$
\hat \alpha_{q} =  \sum_{j=j_1}^{j_2} w_j\log_2 S^{(qq)}_{n}(a(n)2^j),\qquad 1 \leq q \leq m,
$$
\begin{equation}\label{eq:alphaW}
\hat \alpha_{q_1 q_2} =  \sum_{j=j_1}^{j_2} w_j\log_2\hspace{0.5mm}|S^{(q_1 q_2)}_{n}(a(n)2^j)|\hspace{0.5mm},\qquad 1 \leq q_1 < q_2 \leq m,
\end{equation}
which are well-defined with probability 1, where $w_j$, $j = j_1,\hdots,j_2$, are linear regression weights satisfying
\begin{equation}\label{e:sum_wj=0,sum_jwj=1}
\sum^{j_2}_{j=j_1}w_j = 0, \quad \sum^{j_2}_{j=j_1}j w_j = 1
\end{equation}
(e.g., \cite{aftv00,stoev:pipiras:taqqu:2002}).
The derived estimator
\begin{equation}\label{equ:defdelta}
\hat\delta_{q_1q_2}=\frac{\hat\alpha_{q_1 q_1}+\hat\alpha_{q_2 q_2}}{2} - \hat\alpha_{q_1q_2},\qquad q_1\neq q_2,
\end{equation}
for the parameter \eqref{e:delta_kn} will be used in Section \ref{sec:test} in the construction of a hypothesis test for fractal connectivity (see also \cite{flandrin:1992,veitch:abry:1999,amblard:coeurjolly:2011}). With the standard Gaussian asymptotics for the sample wavelet variances $\{S_n(a(n)2^j)\}_{j=j_1,\hdots,j_2}$ established in Proposition \ref{p:4th_moments_wavecoef}, the system of equations \eqref{eq:alphaW} and \eqref{equ:defdelta} is expected to yield efficient estimators. In fact, this is proved in the next section.

\subsubsection{Asymptotic distribution}\label{s:asymptotic_performance}

In the next result, Theorem~\ref{t:asymptotic_dist_estimators}, we build upon Proposition \ref{p:4th_moments_wavecoef} to show that the vector $( \widehat{\alpha}_{q_1 q_2})_{q_1,q_2=1,\hdots,m}$ is an asymptotically unbiased and Gaussian estimator of the scaling exponents $( {\alpha}_{q_1 q_2})_{q_1,q_2=1,\hdots,m}$ (whence the analogous statement holds for the estimator \eqref{equ:defdelta}). As in the aforementioned proposition, the decorrelation property of the wavelet transform contributes to the Gaussianity of the asymptotic distribution.
\begin{theorem}\label{t:asymptotic_dist_estimators}
Let $w_{j_1}, \hdots, w_{j_2}$ be the weight terms in \eqref{eq:alphaW} and let $G$ be as in \eqref{e:clt_wavelet_coef}. Suppose
\begin{equation}\label{e:rho_q1q2_neq_0}
\rho_{q_1 q_2} \neq 0, \quad q_1,q_2 = 1,\hdots,m, \quad q_1 \neq q_2
\end{equation}
(see \eqref{e:HfBm_specdens}). Then, as $n \rightarrow \infty$,
\begin{equation}\label{e:asymptotic_dist_estimators}
\vecoper_{{\mathcal S}} \Big[ \Big(\frac{\sqrt{n}}{a(n)^{\delta_{q_1 q_2}+1/2}}\hspace{1mm}( \widehat{\alpha}_{q_1 q_2} - \alpha_{q_1 q_2})\Big)_{q_1,q_2=1,\hdots,m} \Big]
\stackrel{d}\rightarrow \NOR_{\frac{m(m+1)}{2}  }(0,MGM^*).
\end{equation}
\end{theorem}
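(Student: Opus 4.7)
The plan is to reduce the asymptotic normality of $\widehat{\alpha}_{q_1q_2}$ to the joint Gaussian limit for the standardized sample wavelet variances already established in Proposition~\ref{p:4th_moments_wavecoef}($vi$), via a linearization of the logarithm. First, I would write, on an event of probability tending to one,
\begin{equation*}
\log_2 |S^{(q_1q_2)}_n(a(n)2^j)| = \log_2 |\varrho^{(q_1q_2)}(a(n)2^j)| + \log_2 W^{(q_1q_2)}_n(a(n)2^j),
\end{equation*}
where the absolute value on the last term is redundant for large $n$ because Proposition~\ref{p:4th_moments_wavecoef}($vi$), together with $\rho_{q_1q_2} \neq 0$, implies $W^{(q_1q_2)}_n(a(n)2^j) \to 1$ in probability.

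For the deterministic term, applying Proposition~\ref{p:4th_moments_wavecoef}($iii$) with the change of variables $y = 2^j x$ gives $\Phi^{jj}_{q_1q_2}(0) = 2^{2 j h_{q_1q_2}} C_{q_1q_2}$, with $C_{q_1q_2}$ independent of $j$, and hence
\begin{equation*}
\varrho^{(q_1q_2)}(a(n)2^j) = (a(n)2^j)^{2h_{q_1q_2}} C_{q_1q_2}\bigl(1 + r_{n}^{(j)}\bigr),
\end{equation*}
where $r_n^{(j)} = O(a(n)^{-\varpi_0})$ is supplied by the regularity condition \eqref{e:g_is_Schwartz}. Taking $\log_2$ and summing against $w_j$, the constants $\log_2 |C_{q_1q_2}|$ and $\log_2 a(n)$ drop out because $\sum w_j = 0$, while the $j$-dependent piece collapses to $2h_{q_1q_2}$ by $\sum j w_j = 1$. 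Therefore
\begin{equation*}
\sum_{j=j_1}^{j_2} w_j \log_2 |\varrho^{(q_1q_2)}(a(n)2^j)| - \alpha_{q_1q_2} = O(a(n)^{-\varpi_0}),
\end{equation*}
and since $\delta_{q_1q_2} \leq h_{\max}-h_{\min}$, the bandwidth condition $n/a(n)^{1+2\varpi_0} \to 0$ in \eqref{e:assumption_a(n)_n} renders this bias $o(a(n)^{\delta_{q_1q_2}+1/2}/\sqrt{n})$.

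For the stochastic term I would expand $\log_2 W = (W-1)/\ln 2 + O((W-1)^2)$ on the event $\{|W-1|\leq 1/2\}$. Using the identity $\sqrt{n}/a(n)^{\delta_{q_1q_2}+1/2} = 2^{j/2}\sqrt{n_{a,j}}/a(n)^{\delta_{q_1q_2}}$, the two steps combine to give
\begin{equation*}
\frac{\sqrt{n}}{a(n)^{\delta_{q_1q_2}+1/2}}(\widehat{\alpha}_{q_1q_2} - \alpha_{q_1q_2}) = \sum_{j=j_1}^{j_2} \frac{w_j 2^{j/2}}{\ln 2}\cdot \frac{\sqrt{n_{a,j}}}{a(n)^{\delta_{q_1q_2}}}\bigl(W^{(q_1q_2)}_n(a(n)2^j) - 1\bigr) + o_p(1).
\end{equation*}
Stacking over pairs $(q_1,q_2)$, this is a fixed linear map $M$ applied to the asymptotically Gaussian vector of Proposition~\ref{p:4th_moments_wavecoef}($vi$): the row of $M$ indexed by $(q_1,q_2)$ is supported on the $J$ coordinates corresponding to that pair, with nonzero entries $w_j 2^{j/2}/\ln 2$. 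Continuous mapping and Slutsky then yield \eqref{e:asymptotic_dist_estimators} with asymptotic covariance $MGM^*$.

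The main obstacle is the bookkeeping that shows both remainders are negligible at the target rate, uniformly over the finite scale range $j_1 \leq j \leq j_2$. For the deterministic bias this is already addressed in the preceding paragraph. For the delta-method residual $O_p(a(n)^{2\delta_{q_1q_2}}/n_{a,j})$, the ratio with the target rate is of order $a(n)^{\delta_{q_1q_2}+1/2}/\sqrt{n}$, which vanishes because $a(n)^{2\delta_{q_1q_2}+1}/n$ is dominated by $a(n)^{4(h_{\max}-h_{\min})+1}/n \to 0$ from \eqref{e:assumption_a(n)_n}. A minor but genuine point, and the only place where $\rho_{q_1q_2} \neq 0$ enters essentially, is in guaranteeing that $W^{(q_1q_2)}_n$ is eventually positive, so that $\log_2 |W^{(q_1q_2)}_n| = \log_2 W^{(q_1q_2)}_n$ and the Taylor expansion is valid on a sequence of events of probability tending to one.
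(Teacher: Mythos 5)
Your proposal is correct and follows essentially the same route as the paper's proof: split $\widehat{\alpha}_{q_1q_2}-\alpha_{q_1q_2}$ into a deterministic bias (killed by the $O(a(n)^{-\varpi_0})$ approximation of $\varrho^{(q_1q_2)}$ by the ideal scaling law together with $\sum_j w_j=0$, $\sum_j jw_j=1$ and the bandwidth condition \eqref{e:assumption_a(n)_n}) and a stochastic term, then linearize the logarithm of $W^{(q_1q_2)}_n$ around $1$ and invoke Proposition~\ref{p:4th_moments_wavecoef}($vi$) plus Slutsky with the same sparse matrix $M$ with entries $2^{j/2}w_j$. The only (immaterial) differences are that you use an explicit second-order Taylor remainder where the paper uses the mean-value form $(W-1)/\theta_+(W)$ with $\theta_+\stackrel{P}{\to}1$, and that you merge the paper's second and third bias terms into a single step.
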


\begin{remark}
Condition \eqref{e:rho_q1q2_neq_0} is needed to ensure the consistency of the estimator. Furthermore, it is clear that $\rho_{q q} > 0$ for $q = 1,\hdots,m$, since otherwise the process is not proper or its spectral density is not positive semidefinite a.e.
\end{remark}

\begin{remark}
The convergence rate in \eqref{e:asymptotic_dist_estimators} depends on the unknown fractal connectivity parameters $\delta_{\cdot \cdot}$. In practice, the latter can be replaced by their estimates or, in some cases, ignored, since they exponentiate the slow growth term $a(n)$.
\end{remark}
\subsubsection{Numerical simulation setting}\label{s:numerical_sim_setting}
We conducted Monte Carlo experiments over 1000 independent realizations of bivariate HfBm. Though several parameter settings were tested, results are reported only for two representative cases: fractally connected ideal-HfBm $(\alpha_{11},\alpha_{22}, \rho_{12}) = (0.4, 0.8, 0.6)$;
and non-fractally connected HfBm (see Example~\ref{ex:A}) with $(\alpha_{11},\alpha_{22}, \delta_{12},\rho_{12}) = (0.4, 0.8, 0.2, 0.6)$.
HfBm copies were synthesized using the multivariate process synthesis toolbox described in \cite{Helgason_H_2011_j-sp_smsspmdccme,Helgason_H_2011_j-sp_fessmgtsuce} and available at {\tt www.hermir.org}.
For the wavelet analysis, least asymmetric Daubechies wavelet with $N_{\Psi} = 2$ vanishing moments were used \cite{Mallat1998}.
Estimation by means of weighted linear regression was performed on the octave range $(j_1, j_2) = (3, \log_2 n - N_{\Psi})$.
This choice of regression range (with fine scale $j_1$ fixed and $j_2\sim\log_2 n$) is used here in order to, first, obtain results that are consistent and comparable with those reported in the literature for the estimation of univariate scaling exponents, cf., e.g., \cite{aftv00} and references therein, and, second, avoid the issue of fixing $h_{\min}$, $h_{\max}$ and $\varpi_0$ in \eqref{e:assumption_a(n)_n}.

Monte Carlo parameter estimates $(\widehat{\alpha}_{11},\widehat{\alpha}_{22}, \widehat{\alpha}_{12}, \widehat{\delta}_{12})$ are reported in Figures~\ref{fig:EStimPerfa} and \ref{fig:EStimPerfb} in terms of bias, standard deviation, skewness and kurtosis, as functions of (the $ \log_2$ of) sample sizes $n=\{2^{10},2^{12},2^{14},2^{16},2^{18},2^{20}\}$.

\subsubsection{Finite sample performance}
\label{s:finite_sample_performance}

Figures~\ref{fig:EStimPerfa} and \ref{fig:EStimPerfb} show that for all four estimates $(\hat \alpha_{11}, \hat \alpha_{22}, \hat \alpha_{12}, \hat \delta_{12})$, the bias becomes negligible as the sample size grows. It can also be seen that the bias is slightly larger for non-fractally connected data, notably for the parameter $\delta_{12}$.

The figures further show that standard deviations for all estimates decrease as $n^{-1/2}$ (the latter trend being plotted as superimposed dashed lines), with no significant difference between fractally and non-fractally connected instances. For $(\hat \alpha_{11}, \hat \alpha_{22}, \hat \alpha_{12})$, there is no detectable dependence of the outcomes on the actual values of the parameters themselves (in accordance with theoretical calculations reported in Table~\ref{tab:varb}).
Accordingly, one observes that the variances for $\hat \alpha_{11}$ and $\hat \alpha_{22} $ are identical and depend on the sample size, but not on any parameter of the stochastic process. For $\hat \alpha_{12} $, the variance does not depend on $\alpha_{12} $, but results not shown demonstrate that it does vary with $\rho_{12}$, as confirmed by approximate calculations (see \eqref{equ:varaxy}). For $\hat \delta$, further simulations not displayed indicate that Var $\hat \delta$ depends on $\delta$ roughly proportionally to $ 1 + \delta $. Departures from fractal connectivity tend to imply a decrease in the variance of $\hat \delta$; this is a counterintuitive phenomenon that has yet to be fully explained.

In regard to convergence in distribution, Figures~\ref{fig:EStimPerfa} and \ref{fig:EStimPerfb} show that the Monte Carlo skewness and kurtosis estimates for $(\alpha_{11},\alpha_{22}) $ are very close to $0$ even at very small sample sizes, both with and without fractal connectivity. This characterizes a fast convergence to limiting normal distributions, and lies in agreement with the asymptotics presented in Theorem~\ref{t:asymptotic_dist_estimators}. However, the weak convergence is observed to be much slower in practice for the cross-exponents $(\alpha_{12}, \delta_{12})$, yet with no noticeable difference between fractally and non-fractally connected instances, i.e., different values of the parameter $\delta_{12}$.

\begin{figure}[tb]
\centerline{
\includegraphics[width=\linewidth]{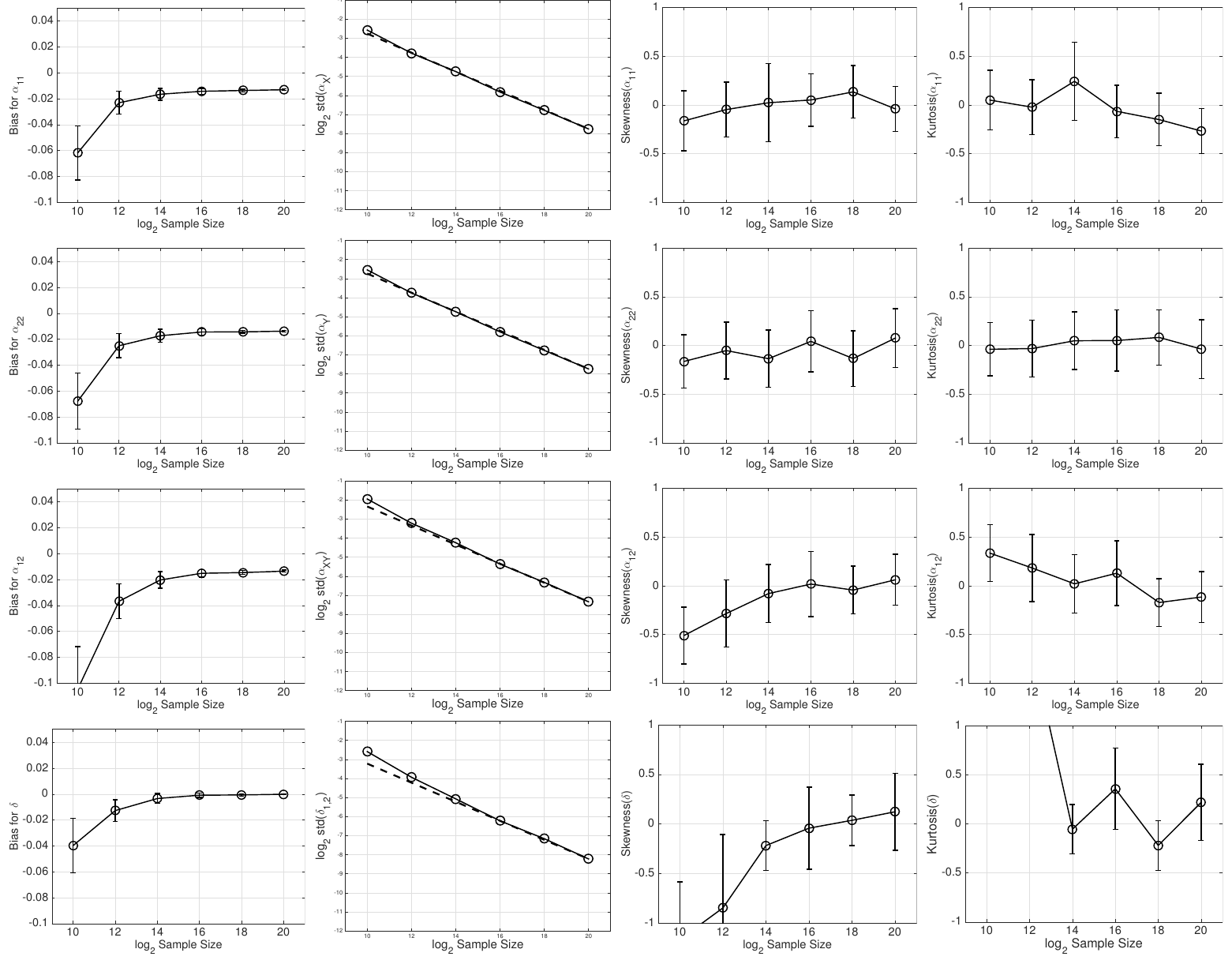}
}
\caption{\label{fig:EStimPerfa}{\bf Estimation performance and asymptotic normality.} Study based on 1000 Monte Carlo realizations of (fractally connected) ideal-HfBm with parameters  $(\alpha_{11},\alpha_{22}, \delta_{12}, \rho_{12}) = (0.4, 0.8, 0, 0.6)$.}
\end{figure}

\begin{figure}[tb]
\centerline{
\includegraphics[width=\linewidth]{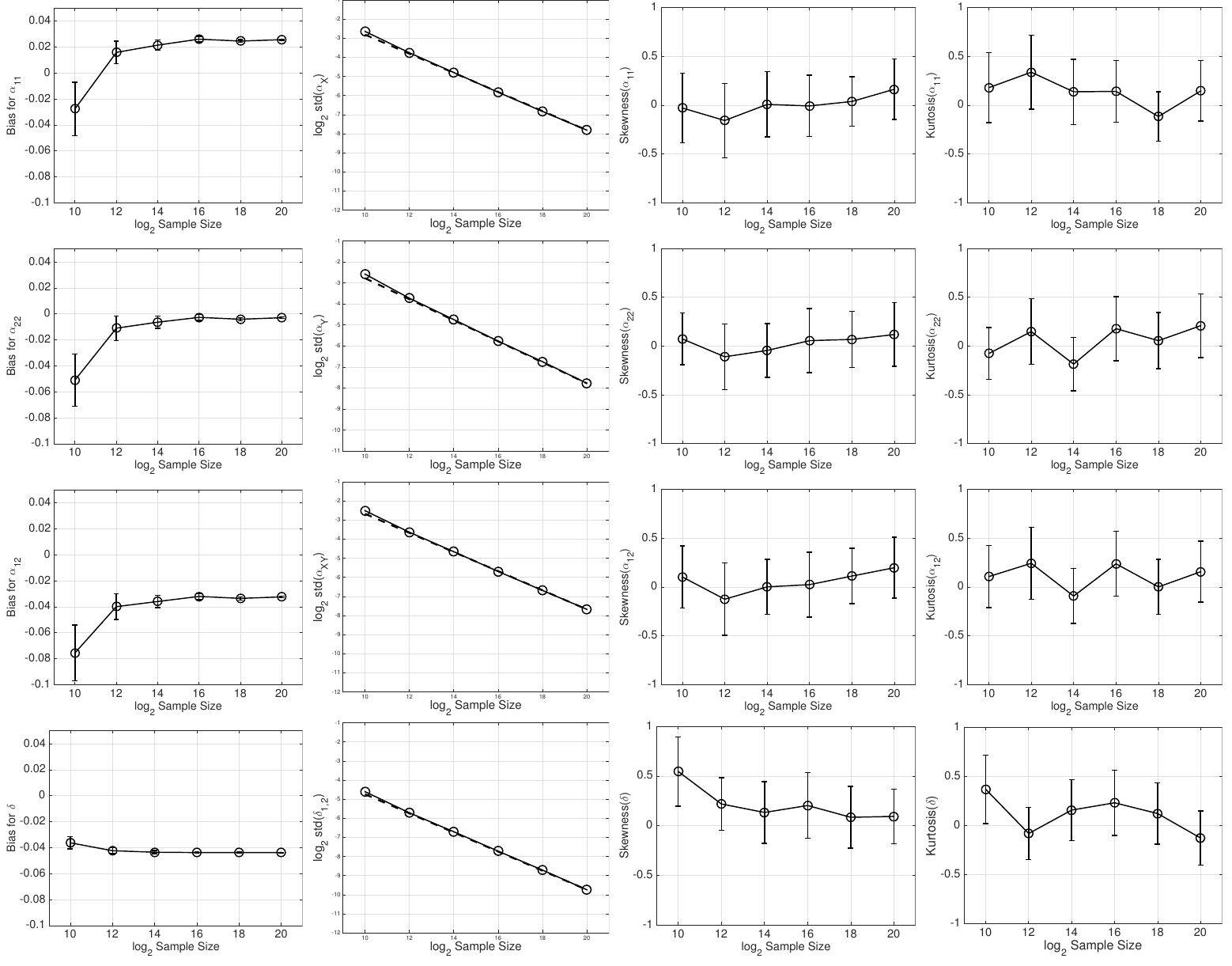}
}
\caption{\label{fig:EStimPerfb}{\bf Estimation performance and asymptotic normality.} Study based on 1000 Monte Carlo realizations of (non-fractally connected) HfBm with parameters ($\alpha_{11},\alpha_{22}, \delta_{12},\rho_{12}) = (0.4, 0.8, 0.2, 0.6)$.}
\end{figure}

\section{Confidence intervals}
\label{sec:MC}

To complement the asymptotic and finite sample results in Sections \ref{s:asymptotic_performance} and \ref{s:finite_sample_performance},
we now construct confidence intervals for the estimators \eqref{eq:alphaW} and \eqref{equ:defdelta}.
In particular, we investigate the effect of omitting the covariance among sample wavelet variance terms,
as well as the dependence of the asymptotic variance in \eqref{e:asymptotic_dist_estimators} and confidence intervals on the unknown parameter values.
This will allow us to efficiently compute approximate confidence intervals in practice based on a single sample path.

\subsection{Asymptotic covariance}
\label{sec:CI}

In Theorem~\ref{t:Cov(log,log)_cross} below, we provide a finite sample approximation for the covariance between the logarithms of sample wavelet variances, coupled with the convergence rate of the covariance approximation to its asymptotic expression in terms of ideal-HfBm covariances \eqref{e:Phi(j,j')q1q2(z)}. As in Theorem \ref{t:asymptotic_dist_estimators}, in part due to the decorrelation property of the wavelet transform, the estimators' second order properties are comparable to those observed in the i.i.d.\ case (e.g., \cite{lehmann:casella:1998}, p.\ 430). In particular, the finite sums provided in Theorem \ref{t:Cov(log,log)_cross} mathematically justify heuristic expressions obtained from Taylor expansions (c.f.\ \eqref{e:Cov(log,log)_cross_order=zero} below), and the theorem also accurately quantifies the approximation error as a function of the sample size and scale.

In the theorem, we use indicator functions to regularize $\log_2  |S^{(q_1q_2)}_{n}(a(n)2^j)|$ around the origin. The truncation converges fast to zero and is of no impact in practice. Indeed, one can redefine the estimator \eqref{eq:alphaW} with the indicators and obtain the same asymptotic distribution \eqref{e:asymptotic_dist_estimators}.

\begin{theorem}\label{t:Cov(log,log)_cross}
Let $q_1,q_2, q_3, q_4 = 1,\hdots,m$ be generic indices, and suppose condition \eqref{e:rho_q1q2_neq_0} holds. Consider $S_{n}(a(n)2^j) = \{S^{(q_1 q_2)}_{n}(a(n)2^j)\}$, $W_{n}(a(n)2^j) = \{W^{(q_1 q_2)}_{n}(a(n)2^j)\}$ as in \eqref{e:Sq1q2} and $\Phi^{jj'}_{q_1q_2}(\cdot)$ as in \eqref{e:Phi(j,j')q1q2(z)}. For notational simplicity, denote
\begin{equation}\label{e:n*}
n_* = \frac{n}{a(n)2^{j+j'}}
\end{equation}
and also recall the definition of $\varpi_0$ in \eqref{e:hmax<delta0=<2(1+hmin)}. Consider a sequence $\{r_n\}_{n \in \bbN} \subseteq (0,1/2)$ satisfying
\begin{equation}\label{e:rn->0}
\frac{1}{r_n} = O\Big( \exp\Big\{C \Big( \frac{n_*}{a(n)^{4 h_{\max} - 4 h_{\min}}}\Big)^{1 - \xi} \Big\} \Big( \frac{a(n)^{4 h_{\max} - 4 h_{\min}}}{n_*}\Big)^{1/2}\Big)
\end{equation}
for any $C > 0$ and any $0 < \xi < 1$. Then,
$$
\COV{ \log_2 |S^{(q_1q_2)}_{n}(a(n)2^j)| 1_{\{|W^{(q_1 q_2)}_{n}(a(n)2^j)| > r_n\}}
}{\log_2 |S^{(q_3q_4)}_{n}(a(n)2^{j'})| 1_{\{|W^{(q_3 q_4)}_{n}(a(n)2^{j'})| > r_n\}}}
$$
$$
= \hspace{0.5mm} \frac{(\log_2 e)^2 \hspace{1mm} 2^{-(j+j')}} {\Phi^{jj}_{q_1q_2}(0) \Phi^{j'j'}_{q_3q_4}(0)\{1 + O(a(n)^{-\varpi_0})\}^2 }
$$
$$
\Big\{\frac{a(n)^{2 (h_{q_1 q_3}+h_{q_2 q_4})- 2 (h_{q_1 q_2}+h_{q_3 q_4})}}{n_*}\Big[ \frac{1}{n_*}\sum^{2^{j'}n_*}_{k = 1}\sum^{2^{j}n_*}_{k' = 1}\Phi^{jj'}_{q_1 q_3}(2^{j}k-2^{j'}k')\Phi^{jj'}_{q_2 q_4}(2^{j}k-2^{j'}k')\Big]
$$
$$
+ \frac{a(n)^{2 (h_{q_1 q_4}+h_{q_2 q_3})-2 (h_{q_1 q_2}+h_{q_3 q_4})}}{n_*} \Big[\frac{1}{n_*}\sum^{2^{j'}n_*}_{k = 1}\sum^{2^{j}n_*}_{k' = 1}\Phi^{jj'}_{q_1 q_4}(2^{j}k-2^{j'}k')\Phi^{jj'}_{q_2 q_3}(2^{j}k-2^{j'}k')\Big]
$$
\begin{equation}\label{e:Cov(log,log)_cross}
+ o\Big( \frac{a^{2\max\{h_{q_1 q_3}+h_{q_2 q_4},h_{q_1 q_4}+h_{q_2 q_3}\} - 2 (h_{q_1 q_2}+h_{q_3 q_4})}}{n_*}\Big) \Big\} + O\Big( \Big(\frac{a(n)^{4 h_{\max} - 4 h_{\min}}}{ n_*}\Big)^2\Big).
\end{equation}
\end{theorem}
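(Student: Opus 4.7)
My plan is to exploit the smoothness of $\log_2|\cdot|$ away from the origin and reduce the covariance of logarithms to the covariance of the standardized wavelet variances $W^{(q_1q_2)}_n$, whose fourth order moments are accessible via Isserlis' (Wick's) formula thanks to the Gaussianity of HfBm. The truncation by $1_{\{|W^{(q_1q_2)}_n| > r_n\}}$ serves exactly to keep us away from the singularity of the logarithm, so the strategy is to decompose
\begin{equation*}
\log_2 |S^{(q_1q_2)}_n(a(n)2^j)| \, 1_{\{|W^{(q_1q_2)}_n|>r_n\}} = \log_2|\varrho^{(q_1q_2)}(a(n)2^j)| \cdot 1_{\{\cdot\}} + \log_2|W^{(q_1q_2)}_n(a(n)2^j)| \cdot 1_{\{\cdot\}},
\end{equation*}
discard the deterministic term (which leaves the covariance invariant up to a truncation bias), and Taylor expand $\log_2|1 + U|$ around $U = W^{(q_1q_2)}_n - 1$ to first order. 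By Proposition~\ref{p:4th_moments_wavecoef}$(vi)$, $W^{(q_1q_2)}_n$ concentrates rapidly around $1$, so the Taylor remainder is quadratic in a Gaussian-like fluctuation of order $\sqrt{a(n)^{4(h_{\max}-h_{\min})}/n_*}$, which is the source of the final $O((a(n)^{4h_{\max}-4h_{\min}}/n_*)^2)$ error term.

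The main term then reads
\begin{equation*}
(\log_2 e)^2 \, \COV{W^{(q_1q_2)}_n(a(n)2^j)\, 1_{\{\cdot\}}}{W^{(q_3q_4)}_n(a(n)2^{j'})\, 1_{\{\cdot\}}}
= \frac{(\log_2 e)^2}{\varrho^{(q_1q_2)}(a(n)2^j)\,\varrho^{(q_3q_4)}(a(n)2^{j'})}\, \COV{S^{(q_1q_2)}_n(a(n)2^j)}{S^{(q_3q_4)}_n(a(n)2^{j'})} + \mathrm{(err)},
\end{equation*}
where the indicators only cost a negligible term provided $r_n$ decays as in \eqref{e:rn->0}. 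To compute $\COV{S^{(q_1q_2)}_n}{S^{(q_3q_4)}_n}$, I expand the double sum over shifts $k,k'$ and apply the Gaussian identity $\Cov[XY,ZW] = \Cov[X,Z]\Cov[Y,W] + \Cov[X,W]\Cov[Y,Z]$ to the centered wavelet coefficients, which yields two sums of the product form $\Xi^{jj',a}_{q_1q_3}(\cdot)\,\Xi^{jj',a}_{q_2q_4}(\cdot) + \Xi^{jj',a}_{q_1q_4}(\cdot)\,\Xi^{jj',a}_{q_2q_3}(\cdot)$ with argument $a(n)(2^jk - 2^{j'}k')$. Pulling out the scaling factor $a(n)^{2h_{q_iq_l}}$ from each $\Xi$ via Proposition~\ref{p:4th_moments_wavecoef}$(iii)$ produces the prefactors $a(n)^{2(h_{q_1q_3}+h_{q_2q_4})}$ and $a(n)^{2(h_{q_1q_4}+h_{q_2q_3})}$; dividing by $\varrho^{(q_1q_2)}\varrho^{(q_3q_4)} = a(n)^{2(h_{q_1q_2}+h_{q_3q_4})}\Phi^{jj}_{q_1q_2}(0)\Phi^{j'j'}_{q_3q_4}(0)\{1+O(a(n)^{-\varpi_0})\}^2$ yields the exponent differences and the denominator that appear in \eqref{e:Cov(log,log)_cross}. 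The index change $2^{j'}n_* = n_{a,j}$ and $2^j n_* = n_{a,j'}$ then matches the normalization $1/n_*^2$ and the summation ranges in the statement.

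The residual work is quantitative error tracking. The approximation $\Xi^{jj',a}_{q_iq_l}(a(n)z) = a(n)^{2h_{q_iq_l}}\Phi^{jj'}_{q_iq_l}(z)\{1 + O(a(n)^{-\varpi_0})\}$ (a strengthening of Proposition~\ref{p:4th_moments_wavecoef}$(iii)$ obtained by using \eqref{e:g_is_Schwartz} and \eqref{e:hmax<delta0=<2(1+hmin)} to bound the regularization remainder) supplies the $\{1+O(a(n)^{-\varpi_0})\}^2$ factor in the denominator and gives the lower order $o(\cdot)$ term inside the braces. The truncation bias is bounded by $P(|W^{(q_1q_2)}_n|\le r_n)$ times $\E[\log_2^2|S^{(q_1q_2)}_n|]^{1/2}$; the Gaussianity of the wavelet coefficients gives a sub-exponential tail for the standardized variance and hence the prescribed growth condition on $1/r_n$ makes this contribution absorbed by the global $O((a(n)^{4h_{\max}-4h_{\min}}/n_*)^2)$ remainder.

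The main obstacle, in my view, is the uniform control of the Taylor remainder together with the truncation: one must show that the second-order terms in the expansion of $\log_2|1+U|$ do not contaminate the leading order, which requires combining Gaussian moment bounds for the $W^{(q_1q_2)}_n$ with the carefully tuned decay rate \eqref{e:rn->0} of $r_n$. Once this bookkeeping is in place, the identification of the explicit finite sum of $\Phi^{jj'}$ products is a direct consequence of Wick's formula and Proposition~\ref{p:4th_moments_wavecoef}.
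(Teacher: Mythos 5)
Your plan is essentially the paper's own proof: split $\log_2|S|$ into the deterministic $\log_2|\varrho|$ plus $\log_2|W|$, Taylor-expand the logarithm around $W=1$, compute $\Cov[W^{(q_1q_2)}_n,W^{(q_3q_4)}_n]$ by Isserlis' formula together with the refinement $\Xi^{jj',a}(a(n)z)=a(n)^{2h}\Phi^{jj'}(z)\{1+O(a(n)^{-\varpi_0})\}$, and absorb the truncation bias using sub-exponential tail bounds for the quadratic form $W$ under the decay condition on $r_n$. The one point you flag as the "main obstacle" is indeed where the paper works hardest: the quadratic Taylor remainder cannot be dismissed by Cauchy--Schwarz alone (that only gives an $(a^{4h_{\max}-4h_{\min}}/n_*)^{3/2}$ bound), and the claimed $O((a^{4h_{\max}-4h_{\min}}/n_*)^2)$ rate requires the Isserlis-based cancellation of third and fourth centered cross-moments (the paper's Lemma on higher-order moments), exactly the "Gaussian moment bounds" you point to.
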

\begin{remark}
Note that the finite summations on the right-hand side of \eqref{e:Cov(log,log)_cross} converge as $n \rightarrow \infty$. For example,
$$
 \frac{1}{n_*}\sum^{2^{j'}n_*}_{k = 1}\sum^{2^{j}n_*}_{k' = 1}\Phi^{jj'}_{q_1 q_3}(2^{j}k-2^{j'}k')\Phi^{jj'}_{q_2 q_4}(2^{j}k-2^{j'}k') \rightarrow \phi^{jj'}(q_1,q_2,q_3,q_4),
$$
where $\phi^{jj'}(\cdot)$ is defined by \eqref{e:phi^jj'(q1,q2,q3,q4)}
(this can be shown by the same argument for establishing \eqref{e:summation_deviation_HfBm_idealHfBm} and \eqref{e:(1/n*)sum_sum_Phi(2jk-2j'k')->gcd(2j,2j')sum_Phi(zgcd(2j,2j'))} in the proof of Proposition \ref{p:4th_moments_wavecoef}, $(iv)$).
\end{remark}

\begin{remark}\label{r:O((a^(4hmax-4hmin)/n*)^2)=o(1)}
In \eqref{e:Cov(log,log)_cross}, the main residual term satisfies
\begin{equation}\label{e:O(a^(4hmax-4hmin)/n*))}
O\Big( \Big(\frac{a(n)^{4 h_{\max} - 4 h_{\min}}}{ n_*}\Big)^2\Big) = o(1)
\end{equation}
under the condition \eqref{e:assumption_a(n)_n}. In practice, the modeling of instances involving extreme deviations from fractal connectivity ($\delta_{\cdot \cdot}$ close to 1) may require greater regularity from the functions $g_{\cdot \cdot}$ (see \eqref{e:HfBm_specdens}) or the wavelet basis. Otherwise, for conservative choices of the regularity parameters (e.g., $\varpi_0 = 2$ and $\beta = 1 + \varepsilon$ for small $\varepsilon > 0$), the expression \eqref{e:O(a^(4hmax-4hmin)/n*))} may converge slower to zero than the main terms in \eqref{e:Cov(log,log)_cross}.
\end{remark}

\begin{remark}
In \eqref{e:D(j,k)}, we assume that an HfBm continuous time sample path is available. However, it is well known that, under mild assumptions, the availability of discrete time observations does not generally alter the nature of the asymptotic distribution for wavelet estimators (see, for instance, \cite{bardet:2002}, Section III; \cite{stoev:pipiras:taqqu:2002}, Section 3.2; \cite{abry:didier:2016:supplementary}, Section C). Establishing this for the case of HfBm is a topic for future work.
\end{remark}

\begin{remark}
When $q = q_1= q_2$, for the univariate case the expansion \eqref{e:Cov(log,log)_cross} appears implicitly in \cite{moulines:roueff:taqqu:2007:spectral}, expression (86). Also, when computing the moments of $W^{(qq)}_{n}(a(n)2^j)$, the truncation is unnecessary (see Remark \ref{r:q=q1=q2=>trunc_is_unnecessary} below).
\end{remark}

\subsection{Variances and covariances of $\hat \alpha_{q_1 q_2}$ and $\hat \delta_{q_1 q_2}$}

\subsubsection{Closed-form approximations}

Turning back to expression \eqref{e:Cov(log,log)_cross}, by ignoring the scaling factor ($a(n) \equiv 1$), the truncation and setting all the convergence order terms to zero we obtain the approximation
$$
\COV{ \log_2  |S^{(q_1q_2)}_{n}(2^j)|}{\log_2 |S^{(q_3q_4)}_{n}(2^{j'})| }
$$
$$
\approx \hspace{0.5mm} \frac{(\log_2 e)^2 2^{-(j+j')}}{\Phi^{jj}_{q_1q_2}(0) \Phi^{j'j'}_{q_3q_4}(0) } \frac{1}{n_j}
\Big\{\frac{1}{n_{j'}}\sum^{n_j}_{k = 1}\sum^{n_{j'}}_{k' = 1}\Phi^{jj'}_{q_1 q_3}(2^{j}k-2^{j'}k')\Phi^{jj'}_{q_2 q_4}(2^{j}k-2^{j'}k')
$$
\begin{equation}\label{e:Cov(log,log)_cross_order=zero}
+ \frac{1}{n_{j'}}\sum^{n_j}_{k = 1}\sum^{n_{j'}}_{k' = 1}\Phi^{jj'}_{q_1 q_4}(2^{j}k-2^{j'}k')\Phi^{jj'}_{q_3 q_2}(2^{j'}k'-2^{j}k)\Big\},
\end{equation}
where
$$
n_{j}:= \frac{n}{2^j}.
$$
Note that the approximation \eqref{e:Cov(log,log)_cross_order=zero} is a finite sample one, and ignores the potential asymptotic decorrelation effect stemming from the shifting scaling factor $a(n)$ (see Proposition \ref{p:4th_moments_wavecoef}, $(v)$, and \eqref{e:Cov(log,log)_cross}). Consider the normalization
\begin{equation}\label{e:r12_in_terms_of_Phi}
\ccoef_{q_1 q_2}(j,k;j',k') := \frac{\Phi^{jj'}_{q_1 q_2}(2^{j}k-2^{j'}k')}{\sqrt{\Phi^{jj}_{q_1 q_1}(0)\Phi^{j'j'}_{q_2 q_2}(0)}} \in [-1,1].
\end{equation}
Expressions \eqref{eq:alphaW}, \eqref{e:Cov(log,log)_cross_order=zero} and \eqref{e:r12_in_terms_of_Phi} yield the covariance approximation
\begin{multline}
\label{equ:VarAlphaFinal}
\COV{ \hat \alpha_{q_1q_2}}{\hat \alpha_{q_3q_4}} \approx(\log_2\e)^2\sum_{j,j'=j_1}^{j_2}\frac{w_{j}w_{j'}}{n_{j}n_{j'}}\sum_{k=0}^{n_j-1}\sum_{k'=0}^{n_{j'}-1} \frac{\ccoef_{q_1q_3}(j,k;j',k')\ccoef_{q_2q_4}(j,k;j',k')}{\ccoef_{q_1q_2}(j,k;j,k)\ccoef_{q_3q_4}(j',k';j',k')}\\
+\frac{\ccoef_{q_1q_4}(j,k;j',k')\ccoef_{q_2q_3}(j,k;j',k')}{\ccoef_{q_1q_2}(j,k;j,k)\ccoef_{q_3q_4}(j',k';j',k')}.
\end{multline}
In particular, \eqref{equ:VarAlphaFinal} further allows us to compute
\begin{multline}\label{eq:var}
\VAR{\hat\delta_{q_1q_2}} = \frac{1}{4}\left(\VAR{\hat\alpha_{q_1q_1}}+\VAR{\hat\alpha_{q_2q_2}}\right) + \VAR{\hat\alpha_{q_1q_2}}
\\+ \frac{1}{2}\COV{\hat\alpha_{q_1q_1}}{\hat\alpha_{q_2q_2}} - \COV{\hat\alpha_{q_1q_1}}{\hat\alpha_{q_1q_2}}
- \COV{\hat\alpha_{q_2q_2}}{\hat\alpha_{q_1q_2}}.
\end{multline}
The variances \eqref{eq:var} will be used in Section~\ref{sec:test} in the construction of a test for fractal connectivity, i.e., for the hypothesis $H_0:\;\delta_{q_1q_2}\equiv 0$.
Table \ref{tab:vara} summarizes the closed-form approximations for $\VAR{\hat\alpha_{q_1q_2}}$, $\VAR{\hat\alpha_{q_1q_1}}$, $\COV{ \hat \alpha_{q_1q_1}}{\hat \alpha_{q_1q_2}}$ and $\COV{ \hat \alpha_{q_1q_1}}{\hat \alpha_{q_2q_2}}$ established in \eqref{equ:VarAlphaFinal}.

 \begin{table}[h]
\centering
\begin{tabular}{| c | c |}
\hline
$\frac{\VAR{ \hat \alpha_{qq}}}{(\log_2\e)^2}$ & $2\sum_{j,j'=j_1}^{j_2}\frac{w_{j}w_{j'}}{n_{j}n_{j'}}\sum_{k,k'=1}^{n_j,n_j'}{\ccoef_{qq}^{2}(j,k;j',k')}{}$\\
\hline
$\frac{\VAR{\hat  \alpha_{q_1q_2}}}{(\log_2\e)^2}$ & $\sum_{j,j'=j_1}^{j_2}\frac{w_{j}w_{j'}}{n_{j}n_{j'}}\sum_{k,k'=1}^{n_j,n_j'}\frac{\ccoef_{q_1q_2}(j,k;j',k')\ccoef_{q_1q_2}(j',k';j,k)+\ccoef_{q_1q_1}(j,k;j',k')\ccoef_{q_2q_2}(j,k;j',k')}{\ccoef_{q_1q_2}(j,k;j,k)\ccoef_{q_1q_2}(j',k';j',k')}$\\
\hline
$\frac{\COV{ \hat \alpha_{q_1q_1}}{\hat \alpha_{q_2q_2}}}{(\log_2\e)^2}$ & $2\sum_{j,j'=j_1}^{j_2}\frac{w_{j}w_{j'}}{n_{j}n_{j'}}\sum_{k,k'=1}^{n_j,n_j'}\frac{\ccoef_{q_1q_2}^2(j,k;j',k')}{\ccoef_{q_1q_1}(j,k;j,k)\ccoef_{q_2q_2}(j',k';j',k')}$\\
\hline
$\frac{\COV{ \hat \alpha_{q_1q_1}}{\hat \alpha_{q_1q_2}}}{(\log_2\e)^2} $ & 2$\sum_{j,j'=j_1}^{j_2}\frac{w_{j}w_{j'}}{n_{j}n_{j'}}\sum_{k,k'=1}^{n_j,n_j'}\frac{\ccoef_{q_1q_1}(j,k;j',k')\ccoef_{q_1q_2}(j,k;j',k')}{\ccoef_{q_1q_1}(j,k;j,k)\ccoef_{q_1q_2}(j',k';j',k')}$\\
\hline
\end{tabular}
\caption{\label{tab:vara}\textbf{Closed-form expression for $\COV{ \hat \alpha_{q_1q_1}}{\hat \alpha_{q_1q_2}}$.}
}
\end{table}

\subsubsection{Impact of inter- and intra-scale correlations}

The expression of $\COV{ \hat \alpha_{q_1q_2}}{\hat \alpha_{q_3q_4}} $ can further be split into three terms, namely,
\begin{multline}
\label{equ:varaxy}
\COV{ \hat \alpha_{q_1q_2}}{\hat \alpha_{q_3q_4}} \approx(\log_2\e)^2\sum_{j=j_1}^{j_2}\Bigg[\frac{w_{j}^2}{n_j}\left(1+\frac{1}{\ccoef_{q_1q_2}(j,0;j,0)\ccoef_{q_3q_4}(j,0;j,0)}\right) \\
+\frac{w_{j}^2}{n_{j}^2}\sum_{k}\sum_{k'\ne k} \frac{\ccoef_{q_1q_3}(j,k;j,k')\ccoef_{q_2q_4}(j,k;j,k')+\ccoef_{q_1q_4}(j,k;j,k')\ccoef_{q_2q_3}(j,k;j,k')}{\ccoef_{q_1q_2}(j,k;j,k)\ccoef_{q_3q_4}(j,k';j,k')} \\
+\sum_{j'\ne j}\frac{w_{j}w_{j'}}{n_{j}n_{j'}}\sum_{k}\sum_{k'} \frac{\ccoef_{q_1q_3}(j,k;j',k')\ccoef_{q_2q_4}(j,k;j',k')+\ccoef_{q_1q_4}(j,k;j',k')\ccoef_{q_2q_3}(j,k;j',k')}{\ccoef_{q_1q_2}(j,k;j,k)\ccoef_{q_3q_4}(j',k';j',k')}\Bigg],
\end{multline}
where the first term in the sum over $j$ reflects the variance only of wavelet coefficients, the second term the covariance of wavelet coefficients at a given scale, and the third term, the covariance of wavelet coefficients at different scales.
In other words, if wavelet coefficients were independent, the second and third terms would equal zero.

The relative contributions of the three terms to the final variances are quantified by means of Monte Carlo simulations conducted following the same protocol and settings as those described in Section \ref{s:numerical_sim_setting}. Table \ref{tab:ratiomethoda}, reporting the relative contributions of each of the three terms for various sample sizes under fractal connectivity (i.e., $\delta_{q_1q_2} \equiv 0$), clearly shows that the second and third terms (intra- and inter-scale covariances) cannot be neglected, namely, one cannot use only the first term (variance) in the construction of confidence intervals for fBm, as proposed in \cite{Abry1998}. Identical conclusions, not shown here, are drawn under departures from fractal connectivity  (i.e., $\delta_{q_1q_2} > 0$).

\begin{table}[ht]
\setlength{\tabcolsep}{2.15pt}
\footnotesize
\centering
\begin{tabular}{| c || c | c | c | c || c | c | c | c || c | c | c | c || c | c | c | c |}\hline
$n$&$2^{10}$&$2^{12}$&$2^{14}$&$2^{16}$&$2^{10}$&$2^{12}$&$2^{14}$&$2^{16}$&$2^{10}$&$2^{12}$&$2^{14}$&$2^{16}$&$2^{10}$&$2^{12}$&$2^{14}$&$2^{16}$\\\hline \hline
& \multicolumn{4}{c||}{$\VAR{\widehat{\alpha}_{11}}$}& \multicolumn{4}{c||}{$\VAR{\widehat{\alpha}_{22}}$}& \multicolumn{4}{c||}{$\VAR{\widehat{\alpha}_{12}}$}& \multicolumn{4}{c|}{$\VAR{\widehat{\delta}_{12}}$}\\\hline
var$\times 10^{3}$&65.33         &9.51         &1.87         &0.42        &69.27        &10.05         &1.97         &0.44        &18.76         &2.73         &0.54         &0.12 &12.42         &1.81        &0.35         &0.08 \\\hline
term 1&0.74        &0.68        &0.66        &0.66        &0.69        &0.65        &0.63        &0.62        &0.72        &0.67        &0.65        &0.64 &0.71           &0.66        &0.65        &0.64 \\
term 2&0.15        &0.14        &0.14        &0.14         &0.20        &0.19        &0.18        &0.18        &0.17        &0.16        &0.16        &0.16 &0.18           &0.17        &0.16        &0.16 \\
term 3&0.11        &0.17         &0.20         &0.20        &0.11        &0.17        &0.19        &0.19        &0.11        &0.17        &0.19         &0.20 &0.11           &0.17        &0.19         &0.2 \\\hline
& \multicolumn{4}{c||}{$\COV{\widehat{\alpha}_{11}}{\widehat{\alpha}_{22}}$}& \multicolumn{4}{c||}{$\COV{\widehat{\alpha}_{11}}{\widehat{\alpha}_{12}}$}& \multicolumn{4}{c||}{$\COV{\widehat{\alpha}_{22}}{\widehat{\alpha}_{12}}$}& \multicolumn{4}{c|}{}\\\hline
co $\times 10^{3}$&54.47         &7.92         &1.55         &0.35        &33.12         &4.82         &0.95         &0.21         &34.1         &4.95         &0.97         &0.22 & & & & \\\hline
term 1&0.72        &0.66        &0.65        &0.64        &0.73        &0.67        &0.66        &0.65        &0.71        &0.66        &0.64        &0.63 & & & & \\
term 2&0.18        &0.17        &0.16        &0.16        &0.16        &0.15        &0.15        &0.15        &0.19        &0.18        &0.17&        0.17 & & & & \\
term 3&0.11        &0.17        &0.19         &0.20        &0.11        &0.17        &0.19         &0.20        &0.11        &0.17        &0.19         &0.2 & & & & \\\hline
\end{tabular}
\caption{\label{tab:ratiomethoda}\textbf{\boldmath Relative contributions of the three terms in \eqref{equ:varaxy} to $\VAR{\widehat{\alpha}_{(\cdot)}}$, $\COV{ \hat \alpha_{q_1q_2}}{\hat \alpha_{q_3q_4}}$ and $\VAR{\widehat{\delta}_{(\cdot)}}$} for various sample sizes. ($(\alpha_{11},\alpha_{22}, \delta_{12}, \rho) = (0.2, 0.6, 0, 0.9)$, $j_1=2$ and $j_2=\{5,7,9,11\}$, $n=\{2^{10},2^{12},2^{14},2^{16}\}$).
}
\end{table}

\subsubsection{First order approximations for the variances and covariances of $\hat \alpha_{q_1q_2}$}

It is of interest to further examine the leading order approximations for the variances and covariances of $\hat \alpha_{q_1q_2}$ and $\delta_{q_1q_2}$, corresponding to neglecting all intra- and inter-scale correlations amongst wavelet coefficients.
The first order approximations neglecting all inter- and intra- scale correlations amongst wavelet coefficients of $\COV{ \hat \alpha_{q_1q_2}}{\hat \alpha_{q_3q_4}} $ and of $\VAR{\hat\delta_{q_1q_2}} $ are summarized in Table \ref{tab:varb}.

\begin{table}[h]
\centering
\begin{tabular}{| c | c |}
\hline
$\frac{\VAR{ \hat \alpha_{qq}}}{(\log_2\e)^2}$ & $ 2 \sum_{j=j_1}^{j_2}\frac{w^2_{j}}{n_{j}} $\\
\hline
$\frac{\VAR{\hat  \alpha_{q_1q_2}}}{(\log_2\e)^2}$ & $\sum_{j=j_1}^{j_2}\frac{w^2_{j}}{n_{j}} (1+ \frac{1}{\ccoef_{q_1q_2}^2(j,0;j,0)})$\\
\hline
$\frac{\COV{ \hat \alpha_{q_1q_1}}{\hat \alpha_{q_2q_2}}}{(\log_2\e)^2}$ & $2\sum_{j=j_1}^{j_2}\frac{w^2_{j}}{n_{j}} \ccoef_{q_1q_2}^2(j,0;j,0)$\\
\hline
$\frac{\COV{ \hat \alpha_{q_1q_1}}{\hat \alpha_{q_1q_2}}}{(\log_2\e)^2} $ & $ 2 \sum_{j=j_1}^{j_2}\frac{w^2_{j}}{n_{j}}$\\
\hline
\hline
$ \frac{\VAR{\hat \delta_{q_1q_2}}}{(\log_2\e)^2} $ &  $ \sum_{j=j_1}^{j_2}\frac{w^2_{j}}{n_{j}} \Big( (\ccoef_{q_1q_2}^2(j,0;j,0) + \frac{1}{\ccoef_{q_1q_2}^2(j,0;j,0)}) - 2 \Big) $\\
\hline
\end{tabular}
\caption{\label{tab:varb}\textbf{First-order approximations in \eqref{equ:VarAlphaFinal} for the variances and covariances of $\hat \alpha_{q_1q_2}$ and $\delta_{q_1q_2}$} neglecting all inter- and intra-scale correlations amongst wavelet coefficients.}
\end{table}

 The results show that $\VAR{ \hat \alpha_{q_1q_1}}$ and $\VAR{\hat  \alpha_{q_1q_2}}$, $q_1 \neq q_2$, do not depend on the actual values of the scaling exponents $( \alpha_{q_1q_1},  \alpha_{q_2q_2},  \alpha_{q_1q_2})$, which corroborates the numerical performance reported in Section~\ref{s:finite_sample_performance}.

Note that for an ideal-HfBm with fractal connectivity, it is straightforward to show that $\ccoef_{qq}(j,0;j,0) \equiv 1 $ and $\ccoef_{q_1q_2}(j,0;j,0) \equiv \rho_{q_1q_2}$ when $q_1 \neq q_2$.
While $\VAR{ \hat \alpha_{q_1q_1}}$ does not depend on correlations $\rho_{q_1q_2}$, as expected, $\VAR{\hat  \alpha_{q_1q_2}}, q_1 \neq q_2 $ does vary with $\rho_{q_1q_2}$ according to $1/\rho^2_{q_1q_2}$, showing that $\VAR{\hat  \alpha_{q_1q_2}} \rightarrow +\infty $ when $ \rho_{q_1q_2} \rightarrow 0$.
This can be interpreted as the fact that when $ \rho_{q_1q_2} \rightarrow 0$, the scaling exponent $ \alpha_{q_1q_2}$, $q_1 \neq q_2 $, loses its meaning.
Furthermore, $ \COV{ \hat \alpha_{q_1q_1}}{\hat \alpha_{q_2q_2}}$ depends on $\rho$ as $ \rho^2_{q_1q_2} $, not surprisingly indicating that when $ \rho_{q_1q_2} \rightarrow 0$ (no correlation amongst components), $ \COV{ \hat \alpha_{q_1q_1}}{\hat \alpha_{q_2q_2}}  \rightarrow 0$ (no correlation amongst estimates).

Moreover, the first order approximation of $\VAR{\hat \delta_{q_1q_2}}$ is observed not to depend on the actual value of $\delta_{q_1q_2}$, while $\VAR{\hat \delta_{q_1q_2}}$ clearly depends on $\delta_{q_1q_2}$  based on the numerical simulations reported in Section~\ref{s:finite_sample_performance}.
This can be interpreted as the fact that, for $\hat \delta_{q_1q_2}$, the first order approximation (neglecting all intra- and inter-scale correlations amongst wavelet coefficients) is not sufficient to approximate well $\VAR{\hat \delta_{q_1q_2}}$, as opposed to what is observed for $\VAR{ \hat \alpha_{q_1q_1}}$ and $\VAR{\hat  \alpha_{q_1q_2}}, q_1 \neq q_2 $.

To finish with, $\VAR{\hat \delta_{q_1q_2}}$ varies with $ \rho_{q_1q_2} $ as $ \rho^2_{q_1q_2}  + 1/ \rho^2_{q_1q_2} -2$.
This shows again that when  $ \rho_{q_1q_2} \rightarrow 0$, parameter $\delta_{q_1q_2}$ becomes irrelevant.
Moreover, it also shows that when $ \rho_{q_1q_2} \rightarrow \pm 1$, $\VAR{\delta_{q_1q_2}}  \rightarrow 0$.
This can be understood as the fact that when $ \rho_{q_1q_2} \rightarrow \pm 1$, a departure from fractal connectivity is no longer permitted, as indicated by \eqref{e:regHfBm_condition_on_the_det}. Thus, $ \rho_{q_1q_2} \rightarrow \pm 1 $ implies $ \delta_{q_1q_2} \rightarrow 0$, which is then no longer a random variable.

\subsection{Practical computation of the variances and covariances of $\hat \alpha_{q_1q_2}$ and $\delta_{q_1q_2}$}

\subsubsection{Computation of $\EE{d_{q_1}(j,k) d_{q_2}(j',k')} $  and $\ccoef_{q_1q_2}(j,k;j',k')$}

Evaluation of \eqref{equ:VarAlphaFinal} requires knowledge of the covariance between wavelet coefficients \eqref{e:r12_in_terms_of_Phi}, which will be developed here explicitly for the discrete and the dyadic wavelet transforms.
Let $h(k)$ and $g(k)$, $k=1,\cdots,L$, be the coefficients of the high pass and low pass filters of the discrete wavelet transform, respectively, and let $\uparrow_2[\cdot]$ and $\downarrow_2[\cdot]$ be the dyadic upsampling and decimation operators.
The wavelet transform of the discrete-time process component $X_q(k)$ yields, at each scale $j=1,\dots,J$, sequences of approximation coefficients $ a_q(j,k)$ and detail coefficients $ d_q(j,k)$, $q = 1,\hdots,m$. The corresponding dyadic coefficients are given by  $\tilde a_q(j,k) =  a_q(j,2^jk)$ and $\tilde d_q(j,k) = d_q(j,2^jk)$, respectively. Pick the initialization $ a_q(0,k) = X_q(k)$ (see \cite{af94} for a discussion of the initialization of the discrete wavelet transform).
At scale $j=1$, $ a_q(1,\cdot)= h\Conv a_q(0,\cdot)$ and $ d_q(1,\cdot)=g\Conv a_q(0,\cdot)$, at scale $j=2$,
$ a_q(2,\cdot)= \uparrow_2[h] \Conv a_q(1,\cdot) = \uparrow_2[h]\Conv h\Conv a_q(0,\cdot)$ and $ d_q(2,\cdot)=\uparrow_2[g]\Conv  a_q(1,\cdot) = \uparrow_2[g]\Conv h\Conv a_q(0,\cdot) $. By iteration, we obtain the sequences of detail coefficients at each scale $j=j'$, i.e.,
\begin{equation}\label{eq:Dw}
d_q(j',k) = \left(g_{j'}\Conv a_q(0,\cdot)\right)(k),\quad  d_q(j',k) =  d_q(j',2^{j'}k),
\end{equation}
where
$$
g_{j'} = \uparrow_{2^{j'-1}}[g] \Conv \big(\CONV_{j=0}^{j'-2}  \uparrow_{2^{j'}} [h]\big).
$$
Now let $\gamma_{h_{q_1q_2}}(s,t)$, $1 \leq q_1 \leq q_2 \leq m$, be the covariances of univariate fBm of indices $h_{q_1q_2}=\alpha_{q_1q_2}/2$,
\begin{equation}
\label{covdx:fbm}
\gamma_{q_1q_2}(s,t) = \rho_{q_1q_2}\sigma_{q_1}\sigma_{q_2}\{|s|^{\alpha_{q_1q_2}}+|t|^{\alpha_{q_1q_2}}-|s-t|^{\alpha_{q_1q_2}}\}.
\end{equation}
Then (cf.\ \cite{flandrin:1992}; note that a change in time scale would only result in a multiplicative constant, which we assume to be absorbed in $\rho_{q_1q_2}$ and which cancels out in the final expression for $\ccoef_{q_1q_2}$)
	\begin{align*}
\hskip5mm&
\hskip-5mm \mathbb{E}[ d_{q_1}(j,k)  d_{q_2}(j',k+\tau)]/(\sigma_{q_1}\sigma_{q_2})=\\
	=& \sum_{p}\sum_{q}g_j(p)g_{j'}(q)\mathbb{E}[a_{q_1}(0,k-p)a_{q_2}(0,k+\tau-q)]\\
	=& -\rho_{q_1q_2} \sum_{p}\sum_{q}g_j(p)g_{j'}(q)|-\tau+q-p|^{\alpha_{q_1q_2}}\\
	&+\rho_{q_1q_2} \sum_{p}g_j(p)\sum_{q}g_{j'}(q)|k-p|^{\alpha_{q_1q_2}}+\rho_{q_1q_2} \sum_{q}g_{j'}(q)\sum_{p}g_j(p)|k+\tau-q|^{\alpha_{q_1q_2}}\\
	=&-\rho_{q_1q_2} \sum_{p}\sum_{q}g_j(p)g_{j'}(q)|\tau-q+p|^{\alpha_{q_1q_2}}\qquad\qquad  \qquad\qquad \qquad  \qquad(p'=p-q)\\
	=&-\rho_{q_1q_2} \sum_{p'}\sum_qg_j(p'+q)g_{j'}(q)|\tau-p'|^{\alpha_{q_1q_2}}
	=-\rho_{q_1q_2} \sum_{p'}\sum_qg_j(p'-q)\check g_{j'}(q)|\tau-p'|^{\alpha_{q_1q_2}}\\
	=&-\rho_{q_1q_2}  \sum_{p}(g_{j}\Conv \check  g_{j'})(p)|\tau-p|^{\alpha_{q_1q_2}} =  -\rho_{q_1q_2}((g_{j}\Conv \check g_{j'})\Conv \eta_{q_1q_2})(\tau),\label{eq:computationDwDw}
\end{align*}
where
\begin{eqnarray*}
\check g_{j}(k)&=&g_{j}(L-k),\quad k=1,\dots,L,\\
\eta_{q_1q_2}(\tau) &=& |\tau|^{\alpha_{q_1q_2}}.
\end{eqnarray*}
Consequently,
\begin{equation}
\label{equ:corrdxfinal}
\ccoef_{q_1q_2}(j,k;j',k')=\rho_{q_1q_2}\frac{((g_{j}\Conv \check g_{j'})\Conv \eta_{q_1q_2})(k'-k)}{\sqrt{((g_{j}\Conv \check g_{j})\Conv \eta_{ii})(0)\;((g_{j'}\Conv \check g_{j'})\Conv \eta_{ll})(0)}}
\end{equation}
and, for the dyadic wavelet transform,
\begin{equation}
\label{equ:corrdxfinaldyad}
\tilde \ccoef_{q_1q_2}(j,k;j',k')=\ccoef_{q_1q_2}(j,2^jk;j',2^{j'}k').
\end{equation}
For given values of $\alpha_{q_1q_2}$ and $\rho_{q_1q_2}$, these expressions can be easily evaluated numerically.

\subsubsection{Practical estimation of (co)variances of $\hat \alpha_{q_1q_2}$ and of ${ \hat \delta_{q_1q_2}}$}

Evaluating \eqref{equ:VarAlphaFinal} and \eqref{eq:var} for HfBm in practice requires the unknown parameter values $\alpha_{q_1q_2}$ and $\rho_{q_1q_2}$ in \eqref{covdx:fbm}, and hence we replace them by their estimates $\widehat{\alpha}_{q_1 q_2}$ and $\widehat{\rho}_{q_1 q_2}$. The former are defined in \eqref{eq:alphaW}, and estimates $\hat\rho_{q_1q_2}$ for $\rho_{q_1q_2}$ for $q_1\neq q_2$ can be readily obtained as the cross-correlation coefficients of the difference  processes (fGn) of the process components $X_{q_1}$ and $X_{q_2}$ 
However, note that the expressions for the (co)variances of $\hat\alpha_{q_1q_2}$ in the previous sections are derived assuming knowledge of the true parameter values and can only be expected to be approximations when these are replaced by estimates.
This will be studied numerically in the next section.

\subsubsection{Assessment of the estimated (co)variances of $\hat \alpha_{q_1q_2}$ and of ${ \hat \delta_{q_1q_2}}$ by means of Monte Carlo experiments}

Monte Carlo studies were conducted following the protocol and settings described in Section~\ref{s:numerical_sim_setting}, aiming to evaluate the quality of the estimated approximations \eqref{equ:VarAlphaFinal} and \eqref{eq:var} for the (co)variances of $\hat \alpha_{q_1q_2}$ and of ${ \hat \gamma_{q_1q_2}}$. The simulations involved $1000$ independent realizations of each of two general instances of HfBm with $m=2$ components, one using the true values of the parameters $\alpha_{q_1q_2}$ and $\rho_{q_1q_2}$, and the other, their estimates $\hat\alpha_{q_1q_2}$ and $\hat\rho_{q_1q_2}$.
Four different sample sizes, $n=\{2^{10},2^{12},2^{14},2^{16}\}$ and three different values $\rho_{12}=\{0.3,0.6,0.9\}$ are investigated for the set of exponents $[\alpha_{11},\alpha_{22},\alpha_{12}]=[0.2,0.6,0.4]$.

Table \ref{tab:estimationalpha} summarizes the square roots of the ratios of the averages over realizations of (co)variance estimates and of the Monte Carlo (co)variances. The first four columns, labeled ``theo/MC", report results obtained when using theoretical parameter values and yield the following conclusions.
First, even for small sample size $n=2^{10}$ and weak correlation $\rho_{12}=0.3$, the quality of the approximations \eqref{equ:VarAlphaFinal} and \eqref{eq:var} is very good for the variances of exponents $\alpha_{q}$, $q = 1,2$, and satisfactory for the ``cross'' exponent $\alpha_{12}$, the covariance parameters and the connectivity parameter $\delta_{12}$. Second, when the sample size $n$ and correlation level $\rho_{12}$ increase, the approximation of variances and covariances becomes excellent, with maximum errors of the order of $5\%$ for $n=2^{16}$ and strong correlation $\rho_{12}=0.9$.
Finally, the last four columns of Table \ref{tab:estimationalpha}, labeled ``est/MC", report results obtained when using estimates $\hat\alpha_{q_1q_2}$ and $\hat\rho_{12}$. They indicate that replacing the true parameter values  $\alpha_{q_1q_2}$ and $\rho_{12}$ with estimates has very little impact on the quality of approximations \eqref{equ:VarAlphaFinal} and \eqref{eq:var}. Indeed, the average values of the (co)variance estimates are essentially equal to those obtained when using true parameter values.

\setlength{\tabcolsep}{2.5mm}
\begin{table}[h]
\footnotesize
\centering
\begin{tabular}{| c | c || c | c | c | c || c | c | c | c |}\hline
\multicolumn{2}{| c ||}{$n$}&$2^{10}$&$2^{12}$&$2^{14}$&$2^{16}$&$2^{10}$&$2^{12}$&$2^{14}$&$2^{16}$\\ \hline
$\rho_{12}$ & {Ratio of $\sqrt{-/-}$} & \multicolumn{4}{c ||}{{theo}/MC} & \multicolumn{4}{ c |}{{est}/MC}\\ \hline
\multirow{4}{*}{$0.3$}
& \va$[\hat{\alpha}_{11}]$ 			& $  0.95$ & $  0.97$ & $  0.95$ & $ 0.99 $ & $  0.94$ & $  0.97$ & $  0.94$ & $ 0.99 $ \\
& \va$[\hat{\alpha}_{12}]$				& $  0.82$ & $  0.85$ & $  0.90$ & $ 0.93 $ & $  0.83$ & $  0.86$ & $  0.90$ & $ 0.93 $ \\
&\cov$[\hat{\alpha}_{11},\widehat{\alpha}_{22}]$	& $  1.12$ & $  1.07$ & $  0.97$ & $ 1.32 $ & $  1.09$ & $  1.06$ & $  0.97$ & $ 1.31 $ \\
& \va$[\hat{\alpha}_{12}]$ 				& $  0.78$ & $  0.82$ & $  0.88$ & $ 0.91 $ & $  0.80$ & $  0.82$ & $  0.88$ & $ 0.91 $\\
\hline
\multirow{4}{*}{$0.6$}
& \va$[\hat{\alpha}_{11}]$ 			& $  0.98$ & $  0.98$ & $  0.96$ & $ 0.98 $ & $  0.98$ & $  0.97$ & $  0.96$ & $ 0.98 $ \\
& \va$[\hat{\alpha}_{12}]$				& $  0.94$ & $  0.92$ & $  1.00$ & $ 1.00 $ & $  0.93$ & $  0.92$ & $  1.00$ & $ 1.00 $ \\
&\cov$[\hat{\alpha}_{11},\widehat{\alpha}_{22}]$	& $  1.00$ & $  0.93$ & $  0.97$ & $ 1.05 $ & $  0.99$ & $  0.92$ & $  0.97$ & $ 1.05 $ \\
& \va$[\hat{\delta}_{12}]$ 				& $  0.88$ & $  0.87$ & $  0.98$ & $ 0.97 $ & $  0.88$ & $  0.87$ & $  0.98$ & $ 0.97 $\\
\hline
\multirow{4}{*}{$0.9$}
& \va$[\hat{\alpha}_{11}]$ 			& $  0.97$ & $  0.95$ & $  0.98$ & $ 0.98 $ & $  0.96$ & $  0.95$ & $  0.98$ & $ 0.98 $ \\
& \va$[\hat{\alpha}_{12}]$				& $  1.00$ & $  0.98$ & $  1.00$ & $ 1.01 $ & $  0.99$ & $  0.98$ & $  1.00$ & $ 1.01 $ \\
&\cov$[\hat{\alpha}_{11},\widehat{\alpha}_{22}]$	& $  1.01$ & $  0.99$ & $  1.01$ & $ 1.02 $ & $  1.00$ & $  0.98$ & $  1.01$ & $ 1.01 $ \\
& \va$[\hat{\delta}_{12}]$ 				& $  0.93$ & $  0.92$ & $  0.95$ & $ 0.97 $ & $  0.95$ & $  0.93$ & $  0.96$ & $ 0.99 $\\ \hline
\end{tabular}
\vspace{1mm}
\caption{\label{tab:estimationalpha}\textbf{\boldmath Estimation of $\VAR{\alpha_{(\cdot)}}$.} Square roots of ratios of mean of (co)variances computed using \eqref{equ:VarAlphaFinal} and of Monte Carlo (co)variances:
\eqref{equ:VarAlphaFinal} evaluated using theoretical values $\alpha_{11}$, $\alpha_{22}$, $\alpha_{12}$,
$\rho_{12}$
(left columns, labeled ``theo/MC'') and estimates  $\hat\alpha_{11}$, $\hat\alpha_{22}$, $\hat\alpha_{12}$, $\hat \rho_{12}$ (right columns, labeled ``est/MC''). ( $(\alpha_{11},\alpha_{22}, \delta_{12}, \rho_{12}) = (0.2, 0.6, 0, \rho_{12})$, $j_1=2$ and $j_2=\{5,7,9,11\}$, $n=\{2^{10},2^{12},2^{14},2^{16}\}$).
}
\end{table}

\section{Statistical test for fractal connectivity}
\label{sec:test}

\subsection{Procedure}
\label{sec:testproc}
The mathematical and computational results in Sections \ref{sec:wavestim} and \ref{sec:MC} enable us to construct component-wise fractal connectivity tests, i.e., for the hypotheses
$$
H_0:\;\delta_{q_1q_2}=\frac{\alpha_{q_1q_1}+\alpha_{q_2q_2}}{2} - \alpha_{q_1q_2}=0,\qquad  q_1\neq q_2.
$$
We assume $\rho_{q_1q_2} \neq 0$ for $q_1\neq q_2$. As a consequence of Theorem \ref{t:asymptotic_dist_estimators}, the distribution of $\hat\delta_{q_1q_2}$ under $H_0$ can be approximated over finite samples by
$$
\hat\delta_{q_1q_2}\sim \mathcal{N}\left(0,\VAR{\hat\delta_{q_1q_2}}\right)\textnormal{ under }H_0,
$$
where, in turn, $\VAR{\hat\delta_{q_1q_2}}$ can be approximated by \eqref{eq:var}. Therefore, a simple two-sided test with significance level
 $\signif=P(\textnormal{reject }H_0|H_0\textnormal{ true})$ can be defined as
\begin{equation}
\label{equ:test}
d_\signif=
\begin{cases}
1,&\textnormal{ if }\;|\hat\delta_{q_1q_2}|> \sqrt{\VAR{\hat\delta_{q_1q_2}}} \phi^{-1}(1-\signif/2);\\
0,&\textnormal{ otherwise},
\end{cases}
\end{equation}
where $\phi^{-1}(\cdot)$ is the inverse cumulative distribution function of the standard Normal distribution.
In addition, the $p$-value of the test statistic (i.e., the probability of observing an absolute value at least as large as $|\hat\delta_{q_1q_2}|$ for the test statistic under $H_0$) is given by
\begin{equation}
\label{equ:pval}
p(|\hat\delta_{q_1q_2}|):=2\phi\Big(- |\hat\delta_{q_1q_2}|\Big/\sqrt{\VAR{\hat\delta_{q_1q_2}}}\Big).
\end{equation}
This test can be performed by evaluating \eqref{equ:test} with an estimate for $\VAR{\hat\delta_{q_1q_2}}$ obtained from the procedure detailed in Section \ref{sec:CI}.

\subsection{Monte Carlo assessment of the test performance}

We assess the performance of the test by applying it to $1000$ independent realizations of HfBm with exponent values $[\alpha_{11},\alpha_{22}]=[0.2,0.6]$ and exponent values $\alpha_{12}$ detailed below for sample sizes $n=\{2^{10},2^{12},2^{14},2^{16}\}$ and correlation levels $\rho_{12}=\{0.5,0.7,0.9\}$.
For simplicity of illustration and without loss of generality, we consider again HfBm with $m=2$ components.
For each realization, the test decision \eqref{equ:test} and the $p$-value \eqref{equ:pval} are evaluated using \eqref{eq:var} with approximations \eqref{equ:VarAlphaFinal} to obtain an estimate of the $\VAR{\hat\delta_{12}}$. Estimates of the expected values of the test decisions and $p$-values, denoted by $\hat d_\signif$ and $\hat p$, are then obtained as the averages over realizations of test decisions and $p$-values \eqref{equ:test} and \eqref{equ:pval}.

We now compare the performance of the proposed test, denoted hereinafter HFBM (not to be confused with the stochastic process HfBm), to that of the test put forward in \cite{Wendt2009icassp}. The latter relies on the intuition that the wavelet coherence function of two components of a multivariate Gaussian scale invariant random process approximately behaves as
$$
\Gamma_{q_1q_2}(j)=S_{n_j}^{(q_1q_2)}(j)/\sqrt{S_{n_j}^{(q_1q_1)}(j)S_{n_j}^{(q_2q_2)}(j)}\simeq \rho_{q_1q_2} 2^{j(\alpha_{q_1q_2}-\alpha_{q_1q_1}-\alpha_{q_2q_2})}.
$$
The test itself, denoted WCF (for wavelet coherence function), is formulated without the rigorous statistical framework developed above. Rather, it is built on the observation that $\Gamma_{q_1q_2}(j)$ is the Pearson product-moment correlation coefficient of the time series $d_{q_1}(j,\cdot)$ and $d_{q_2}(j,\cdot)$, and hence that the Fisher's $z$ statistics of $\Gamma_{q_1q_2}(j)$, $j=j_1,\ldots,j_2$, are approximately Gaussian, with known variances and, in the case of fractal connectivity, with equal means across scales. The test for fractal connectivity is then formulated as the UMPI test for the equality of means of Gaussian random variables, cf.\ \cite{Wendt2009icassp} for details.

\subsubsection{Performance under $H_0$}
We first consider the case that $H_0$ is true, i.e., $(\alpha_{11}+\alpha_{22})/2=\alpha_{12}=0.4$. The significance level is set to $\signif=0.1$, results are reported in Table \ref{tab:test:alpha} for the proposed test (top) and for the test in \cite{Wendt2009icassp} (bottom). Note that, under $H_0$, averages of test decisions $\hat d_\signif$ should equal the preset significance level $\signif$, and averages of $p$-values $\hat p$ should equal $\frac{1}{2}$.
HFBM rejects $H_0$ with slightly larger probability than the prescribed value $\signif=0.1$, yet the differences between empirical significance levels $\hat d_\signif$ and $\signif$ never exceed $5\%$; similarly, average $p$-values are slightly below $0.5$. For large sample size and large $\rho_{12}$, average test decisions and $p$-values are very close to the theoretical values $\signif=0.1$ and $p=\frac{1}{2}$. These remarks are consistent with the results reported in Table \ref{tab:estimationalpha}, where a small but systematic underestimation of $\VAR{\hat\delta_{12}}$ for small sample sizes and $\rho_{12}$ is observed.

In contrast, the empirical significances $\hat d_\signif$ of WCF strongly differ from the preset value $\signif$ by values of up to $16\%$, and this difference is especially pronounced for large sample sizes for which one would expect the test to perform better. One reason for this poor performance may  lie in the fact that the test in \cite{Wendt2009icassp} was designed for fGn, rather than fBm. Note that the asymptotic calculations developed above can be adapted to the easier case of fGn (and, in principle, any other Gaussian process with stationary increments) without difficulty by simply changing the covariance function $\gamma_{q_1q_2}(s,t)$ in the calculations leading to the expressions \eqref{equ:corrdxfinal} and \eqref{equ:corrdxfinaldyad}.

\subsubsection{Test power}
We assess the power of the test under the alternative hypotheses $H_1:\;\delta_{12}=(\alpha_{11}+\alpha_{22})/2-\alpha_{12}\neq 0$ with $\delta_{12}=\{0.05,\,0.1,\,0.15,\,0.2\}$.
Yet, a direct comparison of the power of HFBM and WCF is only meaningful for identical rejection probabilities under $H_0$.
Indeed, a test for which $\hat d_\signif>\signif$ under $H_0$ is expected to display an artificially large power.
Since the performance of HFBM and WCF differ, as discussed above (cf.\ Table \ref{tab:test:alpha}), we therefore adjust, for each sample size and correlation level, the prescribed significance to the value $\tilde\signif$ for which the average rejection rate under $H_0$ equals $\hat d_{\tilde\signif}=\signif=0.1$. Using this adjusted level of significance $\tilde\signif$, the power of the test is then estimated as the average of the test decisions $d_{\tilde\signif}$ when $H_1$ is true.
Results are reported in Table \ref{tab:test:beta} and yield the following conclusions. First, the power of each test systematically increases with the magnitudes of the deviation from $\delta_{12}=0$, of the correlation level $\rho_{12}$ and of the sample size $n$, as expected. Second, HFBM is systematically and significantly more powerful. Indeed, it enables us to detect a non-zero value for $\delta_{12}$  up to two times as often as WCF. For instance, for the small sample size of $n= 2^{10}$ and the low correlation level of $\rho_{12}=0.5$, it permits the detection of a deviation of $0.2$ from the null value $\delta_{12}=0$ with probability $0.5$, as compared to a probability of $0.22$ for the test in \cite{Wendt2009icassp}.

Overall, these results confirm that the proposed developments can be relevantly applied in the assessment of scaling and fractal connectivity in multivariate time series.

\begin{table}[h]	
\centering
\footnotesize{%
\begin{tabular}{| c || r | r || r| r || r| r |}
\hline \multicolumn{7}{|c|}{HFBM -- $H_0:\;\delta_{12}\equiv 0$, $\signif=0.1$}\\
\hline
& \multicolumn{2}{c||}{$ \rho_{12}=0.5$} & \multicolumn{2}{c||}{$\rho_{12}=0.7 $}& \multicolumn{2}{c|}{$\rho_{12}=0.9 $}\\
\hline
 & $100\hat d_\signif$ & $\hat p$ & $100\hat d_\signif$ & $\hat p$ & $100\hat d_\signif$ & $\hat p$\\\hline
$n=2^{10}$& $  13.2$& $0.45$& $  14.0$& $0.48$& $  14.5$& $0.45$\\
\hline
$n=2^{12}$& $  13.8$& $0.45$& $  10.9$& $0.47$& $  11.1$& $0.45$\\
\hline
$n=2^{14}$& $  13.8$& $0.45$& $  11.2$& $0.46$& $  11.0$& $0.46$\\
\hline
$n=2^{16}$& $  12.3$& $0.46$& $  10.9$& $0.48$& $  11.0$& $0.47$\\
\hline
\hline \multicolumn{7}{|c|}{WCF -- $H_0:\;\delta_{12}\equiv 0$, $\signif=0.1$}\\
\hline
& \multicolumn{2}{c||}{$ \rho_{12}=0.5$} & \multicolumn{2}{c||}{$\rho_{12}=0.7 $}& \multicolumn{2}{c|}{$\rho_{12}=0.9 $}\\
\hline
 & $100\hat d_\signif$ & $\hat p$ & $100\hat d_\signif$ & $\hat p$ & $100\hat d_\signif$ & $\hat p$\\\hline
$n=2^{10}$& $  14.7$& $0.44$& $  15.3$& $0.46$& $  16.0$& $0.44$\\
\hline
$n=2^{12}$& $  20.7$& $0.42$& $  16.0$& $0.43$& $  17.2$& $0.42$\\
\hline
$n=2^{14}$& $  20.9$& $0.40$& $  18.6$& $0.41$& $  22.0$& $0.39$\\
\hline
$n=2^{16}$& $  26.1$& $0.36$& $  22.5$& $0.36$& $  22.1$& $0.38$\\
\hline
\end{tabular}
}
\caption{\label{tab:test:alpha}\textbf{Test significance.}
Mean test decisions and $p$-values for different values of $n$ and $\rho_{12}$ under $H_0:\;\delta_{12}=(\alpha_{11}+\alpha_{22})/2-\alpha_{12}=0$ ($[\alpha_{11},\alpha_{22}]=[0.2,0.6]$, $j_1=2$ and $j_2=\{5,7,9,11\}$, $n=\{2^{10},2^{12},2^{14},2^{16}\}$) for the proposed test (top) and for the test in \cite{Wendt2009icassp} (bottom).}
\end{table}

\setlength{\tabcolsep}{1.75mm}
\begin{table}[h]	
\centering
\footnotesize{%
\begin{tabular}{| c|| r| r | r| r || r| r | r| r || r| r | r| r |}
\hline \multicolumn{13}{|c|}{HFBM -- $H_1:\;\delta_{12}\neq0$, $\hat d_\signif=0.1$}\\
\hline
 & \multicolumn{4}{c||}{ $\rho_{12}=0.5$} & \multicolumn{4}{c||}{$\rho_{12}=0.7 $}& \multicolumn{4}{c|}{$\rho_{12}=0.9$ }\\ \hline
$\delta_{12}$ &$0.05$&$0.1$&$0.15$&$0.2$ &$0.05$&$0.1$&$0.15$&$0.2$ &$0.05$&$0.1$&$0.15$&$0.2$ \\
\hline\hline
$n=2^{10}$& $  16.5$& $  29.9$& $  40.6$& $  50.0$& $  25.4$& $  43.5$& $  66.1$& $  84.5$& $  74.8$& $  98.6$& $ 100.0$& $  99.9$\\
\hline
$n=2^{12}$& $  28.8$& $  60.8$& $  81.9$& $  93.3$& $  63.6$& $  96.8$& $  99.8$& $ 100.0$& $ 100.0$& $ 100.0$& $ 100.0$& $ 100.0$\\
\hline
$n=2^{14}$& $  67.1$& $  97.9$& $  99.9$& $ 100.0$& $  99.2$& $ 100.0$& $ 100.0$& $ 100.0$& $ 100.0$& $ 100.0$& $ 100.0$& $ 100.0$\\
\hline
$n=2^{16}$& $  99.4$& $ 100.0$& $ 100.0$& $ 100.0$& $ 100.0$& $ 100.0$& $ 100.0$& $ 100.0$& $ 100.0$& $ 100.0$& $ 100.0$& $ 100.0$\\
\hline
\hline \multicolumn{13}{|c|}{WCF -- $H_1:\;\delta_{12}\neq0$, $\hat d_\signif=0.1$}\\
\hline
 & \multicolumn{4}{c||}{ $\rho_{12}=0.5$} & \multicolumn{4}{c||}{$\rho_{12}=0.7 $}& \multicolumn{4}{c|}{$\rho_{12}=0.9$ }\\ \hline
$\delta_{12}$ &$0.05$&$0.1$&$0.15$&$0.2$ &$0.05$&$0.1$&$0.15$&$0.2$ &$0.05$&$0.1$&$0.15$&$0.2$ \\
\hline\hline
$n=2^{10}$& $  10.5$& $  15.4$& $  16.6$& $  22.3$& $  14.1$& $  21.7$& $  36.9$& $  52.9$& $  43.7$& $  81.0$& $  94.2$& $  99.2$\\
\hline
$n=2^{12}$& $  17.8$& $  35.9$& $  52.5$& $  69.4$& $  34.7$& $  78.9$& $  95.8$& $  99.4$& $  98.2$& $ 100.0$& $ 100.0$& $ 100.0$\\
\hline
$n=2^{14}$& $  43.4$& $  86.2$& $  99.0$& $ 100.0$& $  89.9$& $ 100.0$& $ 100.0$& $ 100.0$& $ 100.0$& $ 100.0$& $ 100.0$& $ 100.0$\\
\hline
$n=2^{16}$& $  93.0$& $ 100.0$& $ 100.0$& $ 100.0$& $ 100.0$& $ 100.0$& $ 100.0$& $ 100.0$& $ 100.0$& $ 100.0$& $ 100.0$& $ 100.0$\\
\hline
\end{tabular}
}
\caption{\label{tab:test:beta}\textbf{Test power for adjusted significance $\hat d_\signif=0.1$.}
Mean test decisions and $p$-values for different values of $n$, $\rho_{12}$ and $\alpha_{12}$ / alternative hypotheses $H_1:\;\delta_{12}=(\alpha_{11}+\alpha_{22})/2-\alpha_{12}\neq 0$ ($[\alpha_{11},\alpha_{22}]=[0.2,0.6]$, $j_1=2$ and $j_2=\{5,7,9,11\}$, $n=\{2^{10},2^{12},2^{14},2^{16}\}$) for the proposed test (top) and for the test in \cite{Wendt2009icassp} (bottom).}
\end{table}

\section{Conclusion}

The present contribution introduces a versatile class of multivariate stochastic processes, named Hadamard fractional Brownian motion (HfBm), for scale invariance modeling. HfBm provides a stochastic framework within which cross-component scaling laws are not directly controlled by the scaling along the main diagonal, namely, HfBm is not necessarily fractally connected.

Interestingly, the theoretical study of HfBm reveals that exact entry-wise scaling on both auto- and cross-components and departures from fractal connectivity are mathematically incompatible. In other words, there is a dichotomy in multivariate scaling modeling: either there is exact entry-wise scaling in every component combined with fractal connectivity, or departures from fractal connectivity are allowed at the price of approximate (i.e., asymptotic) scaling on the cross-components.

Our main mathematical results consist of an asymptotically normal, wavelet-based linear regression estimator for the scaling exponents, as well as asymptotically valid confidence intervals with convenient mathematical expressions. Furthermore, the Taylor expansions used in the development of the asymptotic confidence intervals lead to the construction of practical procedures for the numerical calculation of the variance of the estimates. These approximate calculations enable the study of the ubiquitous issue of the impact of neglecting intra- or inter-scale correlations amongst wavelet coefficients in the computations of variances and covariances for the estimates. We also devised an asymptotically normal hypothesis test for fractal connectivity. Again, a major feature of the designed test procedure is the fact that it can be applied to a single observed HfBm data path.

For both fractally and non-fractally connected instances, simulations demonstrate the satisfactory performance of the estimators of the scaling and fractal connectivity parameters, even for small sample size data. The estimation bias is shown to be negligible, and the variance decreases according to the inverse of the sample size. In addition, the practical computations of approximated variances and covariances of the estimates are shown to be of excellent quality, irrespective of sample size, and the Monte Carlo significance levels and powers are very close to their theoretical counterparts.

The tools developed in the present contribution pave the way for novel analysis and modeling perspectives on multivariate scaling in real-world data, in the spirit of the accomplishments in \cite{CIUCIU:2014:A}. Routines for the synthesis of HfBm, as well as for estimation, computation of confidence intervals and fractal connectivity testing will be made publicly available at time of publication.

\bibliographystyle{plain}
\bibliography{biblio}

\appendix

\vspace{5mm}
\noindent \textbf{Appendix: proofs}\\

This appendix comprises three parts, i.e., \ref{s:4th_moments_wavecoef}, \ref{s:aci_appendix} and \ref{s:CI_appendix}, which contain the proofs for Sections \ref{sec:wavsp}, \ref{sec:aci} and \ref{sec:CI}, respectively. In \ref{s:aci_appendix} and \ref{s:CI_appendix}, we assume throughout that the assumptions of Theorems \ref{t:asymptotic_dist_estimators} and \ref{t:Cov(log,log)_cross}, respectively, hold.

In the proofs, whenever convenient we will use the shorthand
\begin{equation}\label{e:a=a(n)}
a = a(n).
\end{equation}
For notational simplicity, we will assume throughout that $\sigma_q = 1$, $q= 1,\ldots,m$. Since the main diagonal entries of an HfBm behave like a perturbed (univariate) fBm, throughout the appendix we only provide proofs for cross-entry components, i.e., when the indices $q_l $ are pairwise distinct, $l = 1,2,3,4$. Whenever convenient we will use $l = 1,2,3,4$ in place of $q_l$, respectively, and also write
$$
h_{q_l q_l} = h_l, \quad h_{q_l q_p} = h_{lp},
$$
In addition, without loss of generality it will be assumed that
\begin{equation}\label{e:mu_j>0}
\varrho^{(12)}(a2^j) > 0, \quad n \in \bbN, \quad j \in \bbN
\end{equation}
(see \ref{e:rho^(q1q2)}), since otherwise we can consider $-\varrho^{(12)}(a2^j)$ instead. We will also write $\log$ instead of $\log_2$, for visual clarity. In the proofs, $C$ represents a generic constant whose value may change from one line to the next.

\section{Section \ref{sec:wavsp}}\label{s:4th_moments_wavecoef}

\noindent {\sc Proof of Proposition~\ref{p:4th_moments_wavecoef}}: For simplicity, we will write $\Xi^{jj'}(\cdot)$ instead of $\Xi^{jj',a}(\cdot)$ throughout the proof.

To show ($i$), the change of variable $s = 2^{-j}t - k$ in \eqref{e:D(j,k)} and the harmonizable representation of HfBm yield
\begin{equation}\label{e:ED(2j,k)D(2j',k')=based_on_harmonizable}
\Xi^{jj'}(a(2^jk - 2^{j'}k'))= \int_{\bbR} e^{ia(2^jk-2^{j'}k')x}f(x) \widehat{\psi}(a2^{j}x)\overline{\widehat{\psi}(a2^{j'}x)}dx,
\end{equation}
where $f(x) := \Big( |x|^{-2(h_{q_1q_2} + 1/2)}g_{q_1q_2}(x)\Big)_{q_1,q_2 = 1,\hdots,m}$. Statement ($ii$) is a consequence of the formula \eqref{e:ED(2j,k)D(2j',k')=based_on_harmonizable} with
\begin{equation}\label{e:k=k'=0,j=j'}
k=k'=0, \quad j = j'.
\end{equation}
Statement $(iii)$ follows from Lemma \ref{l:cov_and_eigenstructure_wavelet_transf_and_var}, $(ii)$, below under condition \eqref{e:assumption_a(n)_n}.

Turning to $(iv)$, establishing \eqref{e:limiting_kron} is equivalent to showing that
$$
\frac{2^{-(j+j')/2}}{n_*} \sum^{2^{j'}n_*}_{k=1}\sum^{2^{j}n_*}_{k'=1}\Big(\frac{\Xi^{jj'}_{12}(a(2^j k - 2^{j'}k'))}{a^{2h_{12}}} - \Phi^{jj'}_{12}(2^j k - 2^{j'}k')\Big)
$$
\begin{equation}\label{e:gcd(2j,2j')sum_Phijj'(z)}
+ \frac{2^{-(j+j')/2}}{n_*} \sum^{2^{j'}n_*}_{k=1}\sum^{2^{j}n_*}_{k'=1}\Phi^{jj'}_{12}(2^j k - 2^{j'}k')
\rightarrow 2^{-(j+j')/2}\textnormal{gcd}(2^j,2^{j'})\sum^{\infty}_{z = - \infty}\Phi^{jj'}_{12}(z\textnormal{gcd}(2^j,2^{j'})),
\end{equation}
$n \rightarrow \infty$.

Consider the entry $q_1=1$, $q_2 =2$ of the matrix-valued function $\Xi^{jj'}$ as in \eqref{e:ED(2j,k)D(2j',k')=based_on_harmonizable}. Based on a change of variable $y = ax$, recast the first term of the sum on the left-hand side of \eqref{e:gcd(2j,2j')sum_Phijj'(z)} as
\begin{equation}\label{e:int_exp(iyz)_ha(y)dy}
\int_{\bbR} e^{iyz} |y|^{-(2 h_{12}+1)}\rho_{12}\Big\{ g_{12}\Big( \frac{y}{a}\Big)-1\Big\} \widehat{\psi}(2^jy) \overline{\widehat{\psi}(2^{j'}y)}dy =:\int_{\bbR} e^{iyz} h_{a}(y) dy.
\end{equation}
Fix $A > 0$ and denote $\widehat{\psi}^{(l)}(x) =  \frac{d^{l}}{d x^{l}} (\Re \widehat{\psi}(x) + i \Im \widehat{\psi}(x))$, $l \geq 0$. By \eqref{e:N_psi}, \eqref{e:psihat_deriv=0} and a Taylor expansion with Lagrange residual of the real and imaginary parts of $\widehat{\psi}$, there exist functions $\lambda_1$, $\lambda_2$ on $[-A,A]$ such that
$$
\widehat{\psi}(x) = \Big( \frac{d^{N_{\psi}}}{dx^{N_{\psi}}} \Re \widehat{\psi}(x)\Big|_{\lambda_1(x)} + i \frac{d^{N_{\psi}}}{dx^{N_{\psi}}} \Im \widehat{\psi}(x)\Big|_{\lambda_2(x)} \Big)\frac{x^{N_{\psi}}}{N_{\psi}!}.
$$
Therefore, and extending this reasoning to $\widehat{\psi}'(x)$, $\widehat{\psi}''(x)$,
\begin{equation}\label{e:psi^_around_zero_u^_around_pi}
|\widehat{\psi}^{(l)}(x)| = O(|x|^{N_{\psi}-l}), \quad x \rightarrow 0, \quad l = 0,1,2.
\end{equation}
For $h_a$ as in \eqref{e:int_exp(iyz)_ha(y)dy}, we now show that
\begin{equation}\label{e:ha=h'a=h''a=0}
\textnormal{$h_{a}$, $h'_{a}$, $h''_{a}$ are differentiable and $h_{a}(0) = h'_{a}(0) = h''_a(0) = 0$}.
\end{equation}
We will only develop expressions for $y > 0$, since analogous developments hold for $y < 0$. For mathematical convenience, rewrite $h_a$ as in \eqref{e:int_exp(iyz)_ha(y)dy} as
\begin{equation}\label{e:ha(y)}
h_a(y) = y^{-(2h_{12}+1)}\rho_{12}\hspace{0.5mm} \vartheta_a(y).
\end{equation}
Hence,
\begin{equation}\label{e:h'a(y)}
h'_a(y) = \rho_{12}\{- (2 h_{12}+1) y^{-(2h_{12}+2)} \vartheta_a(y) + y^{-(2h_{12}+1)} \vartheta'_a(y)\},
\end{equation}
$$
h''_a(y) = \rho_{12}\{(2 h_{12}+1) (2 h_{12}+2) y^{-(2h_{12}+3)} \vartheta_a(y) -2 (2 h_{12}+1) y^{-(2h_{12}+2)} \vartheta'_a(y)
$$
\begin{equation}\label{e:h''a(y)}
+ y^{-(2h_{12}+1)} \vartheta''_a(y)\}.
\end{equation}
Note that, by conditions \eqref{e:N_psi} and \eqref{e:hmax<delta0=<2(1+hmin)},
\begin{equation}\label{e:2N+delta0-(2*h12+1+l)-1>0}
2N_{\psi}+ \varpi_0 - (2h_{12}+1 + l)-1 \geq 2 + \varpi_0 - 2 h_{\max} -l  > 0, \quad l = 0,1,2.
\end{equation}
Then, $h_a$ is also smooth around zero and $h_a(0) = 0$. Moreover, by \eqref{e:psi^_around_zero_u^_around_pi} and \eqref{e:2N+delta0-(2*h12+1+l)-1>0} with $l = 0$, for fixed $n$ and $|y| \leq a \varepsilon_0$,
\begin{equation}\label{e:|h_a(y)|/y->0}
\frac{|h_a(y)|}{y} \leq  \frac{C}{y}y^{-(2h_{12}+ 1)} \Big(\frac{y}{a}\Big)^{\varpi_0} y^{2N_{\psi}} = \frac{C}{a^{\varpi_0}} y^{2N_{\psi}+ \varpi_0 - (2h_{12}+2)} \rightarrow 0,
\end{equation}
as $y \rightarrow 0^{+}$, where the constant $C > 0$ does not depend on $n$. Similarly, by \eqref{e:g_is_Schwartz}, \eqref{e:psi^_around_zero_u^_around_pi} and \eqref{e:2N+delta0-(2*h12+1+l)-1>0} with $l = 1$,
$$
\frac{|h'_a(y)|}{y} \leq \frac{C'}{y} \Big\{y^{-(2h_{12}+ 2)}\Big(\frac{y}{a}\Big)^{\varpi_0} y^{2N_{\psi}}
+ y^{-(2h_{12}+ 1)}  \Big[  \Big( \frac{y}{a}\Big)^{\varpi_0 - 1}\frac{1}{a} y^{2N_{\psi}} + \Big( \frac{y}{a}\Big)^{\varpi_0} y^{2N_{\psi}-1}\Big] \Big\}
$$
\begin{equation}\label{e:|h'a(y)|/y->0}
\leq C' \frac{y^{2 N_{\psi} - (2h_{12}+2) + \varpi_0 -1}}{a^{\varpi_0}} \rightarrow 0.
\end{equation}
This proves \eqref{e:ha=h'a=h''a=0}, as desired. Next, note that
$$
\frac{\partial}{\partial x}e^{itx} \psi(t) = it e^{itx}\psi(t), \quad \frac{\partial^2}{\partial x^2}e^{itx} \psi(t) = -t^2 e^{itx}\psi(t)
$$
and $it \psi(t)$, $t^2 \psi(t) \in L^1(\bbR)$ by the continuity of $\psi$ and condition \eqref{e:W2}. Therefore, by the dominated convergence theorem, $\widehat{\psi}'(x) = C \int_{\bbR} e^{itx} it \psi(t) dt$, $\widehat{\psi}''(x) = C' \int_{\bbR} e^{itx} (-t^2) \psi(t) dt$ for appropriate constants $C, C' \in \bbR$. Consequently, by \eqref{e:W2},
\begin{equation}\label{e:psi^,psi^',psi^''_are_bounded}
\max_{l=0,1,2} \sup_{x \in \bbR}|\widehat{\psi}^{(l)}(x)| \leq C \int_{\bbR} |t|^l |\psi(t)| dt < \infty.
\end{equation}
So, fix $z \neq 0$. By \eqref{e:g_is_bounded} and \eqref{e:psi^,psi^',psi^''_are_bounded},
\begin{equation}\label{e:integ_by_parts_limits_are_zero}
\lim_{|y|\rightarrow \infty}\Big|h_{a}(y)\Big| = 0 = \lim_{|y|\rightarrow \infty}\Big|h'_{a}(y)\Big|.
\end{equation}
Thus, in view of \eqref{e:int_exp(iyz)_ha(y)dy}, \eqref{e:ha=h'a=h''a=0}, \eqref{e:h'a(y)}, \eqref{e:h''a(y)}, \eqref{e:2N+delta0-(2*h12+1+l)-1>0} (with $l =2$) and \eqref{e:integ_by_parts_limits_are_zero}, by integrating by parts twice we obtain
$$
\Big| \frac{\Xi^{jj'}_{12}(az)}{a^{2h_{12}}} - \Phi^{jj'}_{12}(z)\Big| = \Big|\frac{1}{z^2} \int_{\bbR} e^{izy} h''_a(y)dy \Big|
$$
\begin{equation}\label{e:|int_eiyz_ha(y)dy|=<C/z2}
\leq \frac{C}{z^2}  \int_{\bbR}  \{|y|^{-(2h_{12}+3)} |\vartheta_a(y)| + |y|^{-(2h_{12}+2)} |\vartheta'_a(y)| + |y|^{-(2h_{12}+1)} |\vartheta''_a(y)|\} dy \leq \frac{C'}{z^2},
\end{equation}
where the last inequality is a consequence of \eqref{e:g_is_bounded}, \eqref{e:psi^_around_zero_u^_around_pi} and \eqref{e:psi^,psi^',psi^''_are_bounded}.

Now consider the first summation term in \eqref{e:gcd(2j,2j')sum_Phijj'(z)}. We proceed as in the proof of Proposition 3.3, $(i)$, in \cite{abry:didier:2016:supplementary} to establish that
\begin{equation}\label{e:summation_deviation_HfBm_idealHfBm}
\frac{1}{n_*} \sum^{2^{j'}n_*}_{k=1}\sum^{2^{j}n_*}_{k'=1}\Big(\frac{\Xi^{jj'}_{12}(a(2^j k - 2^{j'}k'))}{a^{2h_{12}}} - \Phi^{jj'}_{12}(2^j k - 2^{j'}k')\Big)
\rightarrow 0, \quad n \rightarrow \infty.
\end{equation}
We outline the main steps for the reader's convenience. By Theorem 1.8 in Jones and Jones \cite{jones:jones:1998}, p.\ 10, the set of values $r \in \bbZ$ to the equation $a2^j k - a2^{j'}k'= r$, $k, k' \in \bbZ$, is given by $\textnormal{gcd}(a2^j,a2^{j'})\bbZ =: {\mathcal R}$. Therefore, a pair $(k,k') \in \bbZ^2$ is a solution to
\begin{equation}\label{e:ajk-aj'k'=wgcd(aj,aj')}
a2^j k - a2^{j'}k'= \textnormal{gcd}(a2^j,a2^{j'})w
\end{equation}
for some $w \in \bbZ$ if and only if it is a solution to
\begin{equation}\label{e:2jk-2j'k'=wgcd(2j,2j')}
2^j k - 2^{j'}k'= \textnormal{gcd}(2^j,2^{j'})w
\end{equation}
for the same $w$. Therefore, we can replace $n$ with $n_*$ in Lemmas B.2 and B.3, \cite{abry:didier:2016:supplementary}, and reexpress the first summation term on the left-hand side of \eqref{e:summation_deviation_HfBm_idealHfBm} as
\begin{equation}\label{e:sum_xi(n/a)/(n/a)*(Xi-Phi/a2h12)}
\sum_{r \in {\mathcal R \cap B_{jj'}(n_*)}} \frac{\xi_r(n_*)}{n_*} \hspace{1mm} \Big(\frac{\Xi^{jj'}_{12}(ar) - \Phi^{jj'}_{12}(ar)}{a^{2h_{12}}}\Big).
\end{equation}
In \eqref{e:sum_xi(n/a)/(n/a)*(Xi-Phi/a2h12)}, $B_{jj'}(n_*)$ is the range for $r$ such that the pairs $(k,k')$ satisfying \eqref{e:2jk-2j'k'=wgcd(2j,2j')} lie in the region
\begin{equation}\label{e:1=<k=<2j'n,1=<k'=<2jn}
1 \leq k \leq 2^{j'}n_*, \quad 1 \leq k' \leq 2^{j}n_*,
\end{equation}
and $\xi_r(n_*)$ is the number of such solution pairs $(k,k')$ given some $r$. Moreover, for any sufficiently large $n$, let $k_0 \in \{1,\hdots,2^{j'}n_*\}$ be the smallest number such that $(k_0,k'(k_0)) \in \bbN^2$ solves \eqref{e:ajk-aj'k'=wgcd(aj,aj')} (for some $w \in \bbZ$), where
\begin{equation}\label{e:k'(k)}
k'(k) := \frac{2^j}{2^{j'}}k - \frac{\textnormal{gcd}(2^j,2^{j'})w}{2^{j'}}.
\end{equation}
From the proof of Lemma B.2, \cite{abry:didier:2016:supplementary}, the set ${\mathcal A}$ of such solutions to \eqref{e:2jk-2j'k'=wgcd(2j,2j')} has the form
\begin{equation}\label{e:k**=k*+aj'N_set_of_solutions}
{\mathcal A} = \Big\{(k,k') \in \bbZ^2: k = k_0 + \frac{2^{j'}}{\gcd(2^j,2^{j'})} \bbZ ,\hspace{2mm} k' \textnormal{ is given by \eqref{e:k'(k)}}\Big\}.
\end{equation}
In light of \eqref{e:k**=k*+aj'N_set_of_solutions}, define the function $k(z) = k_0 + \frac{2^{j'}}{\textnormal{gcd}(2^j,2^{j'})}z$, $z \in \bbZ$. In particular, $(k(0), k'(k(0)))$ is a solution pair for \eqref{e:2jk-2j'k'=wgcd(2j,2j')}. Let $\bbR \ni x = \gcd(2^j,2^{j'}) (n_* - k_0/2^{j'})$. Then, by \eqref{e:k**=k*+aj'N_set_of_solutions}, $(k(\lfloor x \rfloor),k'(k(\lfloor x \rfloor)) )$ is the rightmost solution for \eqref{e:ajk-aj'k'=wgcd(aj,aj')} within the first-entry range $k = 1,\hdots,2^{j'}n_*$. Moreover, given $r$, the number of solution pairs in the region \eqref{e:1=<k=<2j'n,1=<k'=<2jn} is $\xi_r(n_*) = \lfloor x \rfloor + 1$, where
\begin{equation}\label{e:integer_part_x*(a/n)->gcd}
\lfloor x \rfloor n^{-1}_{*} \rightarrow \gcd(2^j,2^{j'}), \quad n \rightarrow \infty.
\end{equation}
In addition, by \eqref{e:|int_eiyz_ha(y)dy|=<C/z2}, and \eqref{e:Phi(j,j')q1q2(z)},
\begin{equation}\label{e:|Xi-Phi|=<1/r2}
\Big| \frac{\Xi^{jj'}_{12}(ar) - \Phi^{jj'}_{12}(ar)}{a^{2h_{12}}}\Big| \leq \frac{C}{r^2}, \hspace{2mm}r \neq 0,
\quad \lim_{n \rightarrow \infty}\Big| \frac{\Xi^{jj'}_{12}(ar) - \Phi^{jj'}_{12}(ar)}{a^{2h_{12}}}\Big| =0, \hspace{2mm}r \in \bbZ.
\end{equation}
By expression \eqref{e:sum_xi(n/a)/(n/a)*(Xi-Phi/a2h12)}, the dominated convergence theorem and \eqref{e:integer_part_x*(a/n)->gcd}, the limit \eqref{e:summation_deviation_HfBm_idealHfBm} holds.

Next recall that, up to a change of sign, $\Phi^{jj'}_{12}(z)$ corresponds to the cross moment of the wavelet transform of a fBm. Thus, by an analogous procedure, we also obtain
\begin{equation}\label{e:(1/n*)sum_sum_Phi(2jk-2j'k')->gcd(2j,2j')sum_Phi(zgcd(2j,2j'))}
\frac{1}{n_*} \sum^{2^{j'}n_*}_{k=1}\sum^{2^{j}n_*}_{k'=1}\Phi^{jj'}_{12}(2^j k - 2^{j'}k')
\rightarrow \textnormal{gcd}(2^j,2^{j'})\sum^{\infty}_{z = - \infty}\Phi^{jj'}_{12}(z\textnormal{gcd}(2^j,2^{j'})), \quad n \rightarrow \infty.
\end{equation}
This establishes \eqref{e:gcd(2j,2j')sum_Phijj'(z)}.

To show statement ($v$), first note that
$$
o\Big(\frac{a^{2\max\{h_{13}+h_{24},h_{14}+h_{23}\} - 2(h_{12}+h_{34})}}{n_*}\Big)\Big/ \frac{a^{(h_1 + h_2 + h_3 + h_4)- 2(h_{12}+h_{34})}}{n_*}=o(1),
$$
which follows from \eqref{e:assumption_a(n)_n} and the fact that 
\begin{equation}\label{e:2max[h13+h24,h14+h23]=<h1+h2+h3+h4}
2\max\{h_{13}+h_{24},h_{14}+h_{23}\}\leq h_1 + h_2 + h_3 + h_4. 
\end{equation}
Thus, by expression \eqref{e:Cov(W12,W34)} for $\cov[W^{(12)}_n(a2^j),W^{(34)}_n(a2^{j'})]$ (established in the proof of Lemma \ref{l:E(Wnj-1)(Wnj'-1)} below),
$$
\frac{\sqrt{n_{a,j}}}{a^{\delta_{12}}}\frac{\sqrt{n_{a,j'}}}{a^{\delta_{34}}}\Cov\Big[W^{(12)}_{n}(a2^j), W^{(34)}_{n}(a2^{j'}) \Big]
= \frac{2^{\frac{j+j'}{2}}n_*}{a^{\delta_{12}+\delta_{34}}}\Cov\Big[W^{(12)}_{n}(a2^j), W^{(34)}_{n}(a2^{j'}) \Big]
$$
$$
= \frac{2^{-\frac{(j+j')}{2}}}{[\Phi^{jj}_{12}(0)\Phi^{j'j'}_{34}(0) (1+ O(a^{-\varpi_0}))^2 ]}
$$
$$
\cdot \Big\{\frac{a^{2(h_{13}+h_{24}) }}{a^{h_1 + h_2 + h_3 + h_4}}  \frac{1}{n_*} \sum^{2^{j'}n_*}_{k = 1}\sum^{2^{j}n_*}_{k' = 1}\Phi^{jj'}_{13}(2^{j}k-2^{j'}k')\Phi^{jj'}_{24}(2^{j}k-2^{j'}k')
$$
\begin{equation}\label{e:sqrt(n)sqrt(n)/aa_Cov(W,W)}
+ \frac{a^{2(h_{14}+h_{23})}}{a^{h_1 + h_2 + h_3 + h_4}} \frac{1}{n_*}\sum^{2^{j'}n_*}_{k = 1}\sum^{2^{j}n_*}_{k' = 1}\Phi^{jj'}_{14}(2^{j}k-2^{j'}k')\Phi^{jj'}_{23}(2^{j}k- 2^{j'}k') \Big\} + o(1).
\end{equation}
By \eqref{e:2max[h13+h24,h14+h23]=<h1+h2+h3+h4} and \eqref{e:sqrt(n)sqrt(n)/aa_Cov(W,W)}, statement $(v)$ holds.

The argument for showing ($vi$) is an adaptation of the proof of Theorem 3.1 in \cite{abry:didier:2016:supplementary} (see also \cite{bardet:2000}, pp.510-513; \cite{bardet:2002}, p.997; and \cite{istas:lang:1997}, Lemma 2). For notational simplicity, we only write the proof for $m = 2$, where the entries are indexed 1 and 2, and $q_1, q_2 = 1,2$ denote generic entries. The proof is by means of the Cram\'{e}r-Wold device. Under \eqref{e:mu_j>0}, form the vector of wavelet coefficients
$$
{\mathbf Y}_n := \Big( d_1(a2^{j_1},1),d_2(a2^{j_1},1), \hdots, d_1(a2^{j_1},n_{a,j_1}),d_2(a2^{j_1},n_{a,j_1});\hdots;
$$
\begin{equation}\label{e:Yn}
d_1(a2^{j_2},1),d_2(a2^{j_2},1), \hdots, d_1(a2^{j_2},n_{a,j_2}),d_2(a2^{j_2},n_{a,j_2})\Big) \in \bbR^{\Upsilon(n)},
\end{equation}
where
\begin{equation}\label{e:Upsilon(n)}
\Upsilon(n) := 2 \hspace{1mm}\sum^{j_2}_{j=j_1}n_{a,j}.
\end{equation}
Let
\begin{equation}\label{e:alphavec}
{\boldsymbol \theta} = ({\boldsymbol \theta}_{j_1},\hdots,{\boldsymbol \theta}_{j_2}) \in \bbR^{3J},
\end{equation}
where $J = j_2 - j_1 + 1$ and ${\boldsymbol \theta}_{j} = (\theta_{j,1},\theta_{j,12},\theta_{j,2})^* \in \bbR^3$, $j = j_1,\hdots,j_2$. Now form the block-diagonal matrix $D_n$ defined by
\begin{equation}\label{e:block-diagonal_matrix_D}
\textnormal{diag}\Big( \underbrace{\frac{1}{n_{a,j_1}}\sqrt{\frac{1}{2^{j_1}}}\Omega_{n,j_1}, \hdots, \frac{1}{n_{a,j_1}}\sqrt{\frac{1}{2^{j_1}}}\Omega_{n,j_1}}_\text{$n_{a,j_1}$};\hdots;
\underbrace{\frac{1}{n_{a,j_2}}\sqrt{\frac{1}{2^{j_2}}}\Omega_{n,j_2}, \hdots, \frac{1}{n_{a,j_2}}\sqrt{\frac{1}{2^{j_2}}}\Omega_{n,j_2}}_\text{$n_{a,j_2}$}\Big),
\end{equation}
where
\begin{equation}\label{e:Omega-nj}
\Omega_{n,j}
= \left(\begin{array}{cc}
\frac{\theta_{j,1}}{\EE{d^2_{1}(a2^j,0)}}  & \frac{1}{a^{\delta_{12}}}\frac{\theta_{j,12}}{2 \EE{d_{1}(a2^j,0)d_{2}(a2^j,0)}} \\
\frac{1}{a^{\delta_{12}}}\frac{\theta_{j,12}}{2 \EE{d_{1}(a2^j,0)d_{2}(a2^j,0)}}  & \frac{\theta_{j,2}}{\EE{d^2_{2}(a2^j,0)}}
\end{array}\right), \quad j = j_1,\hdots, j_2
\end{equation}
In \eqref{e:Omega-nj}, it can be understood that the slow growth factors for the main diagonal terms are $\frac{1}{a^{2\delta_{11}}} = \frac{1}{a^{2\delta_{22}}} \equiv 1$. We would like to establish the limiting distribution of the statistic
$$
T_{n} = \sum^{j_2}_{j=j_1} \frac{{\boldsymbol \theta}^*_{j}}{\sqrt{2^j}} \vecoper_{{\mathcal S}}\Big[ \Big(\frac{1}{a^{ \delta_{q_1 q_2}}}\Big)_{q_1,q_2 = 1,2} \circ W_n(a2^j) \Big]
$$
$$
= \sum^{j_2}_{j=j_1} \frac{{\boldsymbol \theta}^*_{j}}{\sqrt{2^j}} \frac{1}{n_{a,j}}\Big(\sum^{n_{a,j}}_{k=1}\frac{d^2_1(a2^j,k)}{\EE{d^2_{1}(a2^j,0)}},
\frac{1}{a^{\delta_{12}}}\sum^{n_{a,j}}_{k=1}\frac{d_1(a2^j,k)d_2(a2^j,k)}{\EE{d_{1}(a2^j,0)d_{2}(a2^j,0)}},\sum^{n_{a,j}}_{k=1}\frac{d^2_2(a2^j,k)}{\EE{d^2_{2}(a2^j,0)}}\Big)^*
$$
$$
= {\mathbf Y}^*_n D_n {\mathbf Y}_n,
$$
where it suffices to consider ${\boldsymbol \theta}$ in \eqref{e:alphavec} such that
\begin{equation}\label{e:only_alphas_for_which_the_limit_is_nontrivial}
{\boldsymbol \theta}^* \Sigma(H) {\boldsymbol \theta} > 0
\end{equation}
(see \cite{brockwell:davis:1991}, pp.211 and 214). The matrix $\Sigma(H)$ in \eqref{e:only_alphas_for_which_the_limit_is_nontrivial} is obtained from \eqref{e:Gjj'} and can be written in block form as $\Sigma(H) =: (G^{jj'})_{j,j'=j_1,\hdots,j_2}$, corresponding to block entries of the vector ${\boldsymbol \theta} = (\theta_{j_1},\hdots,\theta_{j_2})^*$. Let $\Gamma_{{\textbf Y}_n} = \COV{{\mathbf Y}_n}{{\mathbf Y}_n}$, and consider the spectral decomposition $\Gamma^{1/2}_{\textbf{Y}_n} D_n \Gamma^{1/2}_{\textbf{Y}_n} = O \Lambda O^*$, where $\Lambda$ is diagonal with real, and not necessarily positive, eigenvalues
\begin{equation}\label{e:xi_i,n}
\xi_{i}(a2^j) , \quad i = 1,\hdots,\Upsilon(n),
\end{equation}
and $O$ is an orthogonal matrix. Now let ${\mathbf Z} \sim N(0,I_{\Upsilon(n)})$. Then,
$$
T_{n} \stackrel{d}= {\mathbf Z}^* \Gamma^{1/2}_{\textbf{Y}_n} D_n \Gamma^{1/2}_{\textbf{Y}_n} {\mathbf Z} = {\mathbf Z}^* O \Lambda  O^* {\mathbf Z} \stackrel{d}={\mathbf Z}^* \Lambda {\mathbf Z} =: \sum^{\Upsilon(n)}_{i=1}\xi_{i}(a2^j) Z^{2}_i.
$$
Assume for the moment that
\begin{equation}\label{e:max_eig=o(1/(2^J/2))}
\max_{i=1,\hdots,\Upsilon(n)}|\xi_{i}(a2^j)| = o\Big(\Big(\frac{a}{n} \Big)^{1/2}\Big).
\end{equation}
By \eqref{e:only_alphas_for_which_the_limit_is_nontrivial} and \eqref{e:Gjj'},
$$
\frac{n}{a} \VAR{T_{n}} = \sum^{j_2}_{j=j_1}\sum^{j_2}_{j'=j_1}{\boldsymbol \theta}^*_{j} \Big\{ \sqrt{\frac{n}{a2^j}} \sqrt{\frac{n}{a2^{j'}}}\cov \Big[\hspace{1mm}\vecoper_{{\mathcal S}}\Big[\Big(\frac{1}{a^{\delta_{q_1 q_2}}}\Big)_{q_1,q_2 = 1,2} \circ W_n(a2^j)\Big],
$$
$$
{\vecoper_{{\mathcal S}}\Big[\Big(\frac{1}{a^{ \delta_{q_1 q_2}}}\Big)_{q_1,q_2 = 1,2} \circ W_n(a2^{j'})\Big]} \hspace{1mm}\Big] \Big\} {\boldsymbol \theta}_{j'}
\rightarrow \sum^{j_2}_{j=j_1}\sum^{j_2}_{j'=j_1}{\boldsymbol \theta}^*_{j} \hspace{1mm}G^{jj'} \hspace{1mm}  {\boldsymbol \theta}_{j'} > 0.
$$
Therefore, there exists a constant $C > 0$ such that, for large enough $n$, $\frac{n}{a} \VAR{T_{n}} \geq C > 0$. In view of condition \eqref{e:max_eig=o(1/(2^J/2))},
$$
\frac{\max_{i=1,\hdots,\Upsilon(n)}|\xi_{i}(a2^j)|}{\sqrt{\VAR{T_{n}}}} \leq C' \Big(\frac{n}{a}\Big)^{1/2} \max_{i=1,\hdots,\Upsilon(n)}|\xi_{i}(a2^j)| \rightarrow 0, \quad n \rightarrow \infty.
$$
The claim \eqref{e:clt_wavelet_coef} is now a consequence of Lemma B.4 in \cite{abry:didier:2016:supplementary}.

So, we need to show \eqref{e:max_eig=o(1/(2^J/2))}. The first step is to establish the bound
\begin{equation}\label{e:maxeig_Gamma1/2DGamma1/2=<maxblockD*maxeigGamma}
\sup_{\textbf{u} \in S^{\Upsilon(n) - 1}} |\textbf{u}^* \Gamma^{1/2}_{\textbf{Y}_n} D_n \Gamma^{1/2}_{\textbf{Y}_n} \textbf{u} | \leq C
\max_{j=j_1,\hdots,j_2}\frac{1}{n_{a,j}} \|\Omega_{n,j}\| \hspace{1mm} \sup_{\textbf{u} \in S^{\Upsilon(n) - 1}} \textbf{u}^* \Gamma_{{\textbf Y}_n} \textbf{u}.
\end{equation}
Let $\textbf{u} \in S^{\Upsilon(n) - 1}$ and let $\textbf{v} =  \Gamma^{1/2}\textbf{u} $. We can break up the vector $\textbf{v}$ into two-dimensional subvectors $v_{\cdot,\cdot}$ to reexpress $\textbf{v} = (v_{j_1,1}, \hdots, v_{j_1,n_{a,j_1}}; \hdots; v_{j_2,1}, \hdots, v_{j_2,n_{a,j_2}})^*$. Based on the block-diagonal structure of $D_n$ expressed in \eqref{e:block-diagonal_matrix_D},
$$
|\textbf{u}^* \Gamma^{1/2}_{\textbf{Y}_n} D_n \Gamma^{1/2}_{\textbf{Y}_n} \textbf{u}| =| \textbf{v}^* D_n \textbf{v} |
=  \Big|\sum^{j_2}_{j=j_1}\sum^{n_j}_{l=1}v^*_{j,l} \hspace{1mm}\frac{\Omega_{n,j}}{n_{a,j} \sqrt{2^j}} \hspace{1mm} v_{j,l}  \Big|
\leq C \sum^{j_2}_{j=j_1}\sum^{n_{a,j}}_{l=1} \frac{1}{n_{a,j} \sqrt{2^j}}\|\Omega_{n,j}\|  \hspace{1mm} \|v_{j,l}\|^2
$$
\begin{equation}\label{e:maxeig_Gamma1/2DGamma1/2=<maxblockD*maxeigGamma_proof}
\leq C \Big( \max_{j=j_1,\hdots,j_2}\frac{1}{n_{a,j}\sqrt{2^j}}\|\Omega_{n,j}\|  \Big)\hspace{1mm}  \sum^{j_2}_{j=j_1}\sum^{n_{a,j}}_{l=1} \|v_{j,l}\|^2
= C \Big( \max_{j=j_1,\hdots,j_2}\frac{1}{n_{a,j} \sqrt{2^j}}\|\Omega_{n,j}\|  \Big) \textbf{u}^* \Gamma_{{\textbf Y}_n} \textbf{u},
\end{equation}
where the constant $C$ comes from a change of matrix norms and only depends on the fixed dimension $m=2$. By taking $\sup_{\textbf{u} \in S^{\Upsilon(n) - 1}}$ on both sides of \eqref{e:maxeig_Gamma1/2DGamma1/2=<maxblockD*maxeigGamma_proof}, we arrive at \eqref{e:maxeig_Gamma1/2DGamma1/2=<maxblockD*maxeigGamma}.

The second step towards showing \eqref{e:max_eig=o(1/(2^J/2))} consists of analyzing the asymptotic behavior of the right-hand side of \eqref{e:maxeig_Gamma1/2DGamma1/2=<maxblockD*maxeigGamma}, as $n \rightarrow \infty$. For this, we will assume the result of Lemma \ref{l:cov_and_eigenstructure_wavelet_transf_and_var} below. So, note that
\begin{equation}\label{e:max1/K_asympt_1/2J}
\max_{j=j_1,\hdots,j_2}\frac{1}{n_{a,j}} \|\Omega_{n,j}\|  \sim C \hspace{1mm}\frac{a}{n}\frac{1}{a^{2 \min\{h_1,h_2,h_{12}\}}}\frac{1}{a^{\delta_{12}}}, \quad n \rightarrow \infty.
\end{equation}
Moreover, by \eqref{e:max_ki=<decay} below, the maximum eigenvalue of $\Gamma_{{\textbf Y}_n}$ is bounded by $\|\Gamma_{{\textbf Y}_n}\|\leq C a^{2 \max\{h_1,h_2\}}$, where $\|\cdot\|$ is the matrix Euclidean norm. Therefore, in view of \eqref{e:max1/K_asympt_1/2J} and \eqref{e:max_ki=<decay}, the right-hand side of \eqref{e:maxeig_Gamma1/2DGamma1/2=<maxblockD*maxeigGamma} is bounded by $C \frac{a^{2 (h_{\max} - h_{\min})+1}}{n}\frac{1}{a^{ \delta_{12}}}$. In turn, the latter expression divided by $\sqrt{\frac{a}{n}}$ is bounded by $C \Big( \frac{a^{4 (h_{\max} - h_{\min}) + 1}}{n}\Big)^{1/2}\frac{1}{a^{\delta_{12}}}$. By condition \eqref{e:assumption_a(n)_n}, this implies \eqref{e:max_eig=o(1/(2^J/2))}, and as a result, also \eqref{e:clt_wavelet_coef}. $\Box$\\

\begin{remark}
As a consequence of expression \eqref{e:sqrt(n)sqrt(n)/aa_Cov(W,W)} and the fact that $\frac{a^{\delta_{12}}}{\sqrt{n_{a,j}}} \rightarrow 0$ under \eqref{e:assumption_a(n)_n},
\begin{equation}\label{e:W12->1_in_L2}
W^{(12)}_{n}(a2^j) \stackrel{L^2(P)}\longrightarrow 1, \quad n \rightarrow \infty.
\end{equation}
\end{remark}

\section{Section \ref{sec:aci}}\label{s:aci_appendix}

\noindent {\sc Proof of Theorem~\ref{t:asymptotic_dist_estimators}}: Fix $q_1,q_2 = 1,\hdots,m$. Based on \eqref{e:sum_wj=0,sum_jwj=1}, rewrite
$$
\frac{1}{a^{\delta_{q_1 q_2}}}\sqrt{\frac{n}{a}}(\widehat{\alpha}_{q_1 q_2} - \alpha_{q_1 q_2})
$$
$$
= \sum^{j_2}_{j=j_1} \frac{w_j}{a^{\delta_{q_1 q_2}}} \sqrt{\frac{n}{a}} \hspace{0.5mm}\log |W^{(q_1 q_2)}_n(a2^j)|
+  \sum^{j_2}_{j=j_1} \frac{w_j}{a^{\delta_{q_1 q_2}}}\sqrt{\frac{n}{a}} \Big\{ \log \frac{\Big|\EE{S^{(q_1 q_2)}_n(a2^j)}\Big|}{a^{\alpha_{q_1 q_2}}} - \log |\Phi^{jj}_{q_1 q_2}(0)| \Big\}
$$
\begin{equation}\label{e:sqrt(n/a)(alphahat-alpha)}
+ \frac{1}{a^{\delta_{q_1 q_2}}}\sqrt{\frac{n}{a}} \Big(\sum^{j_2}_{j=j_1} w_j \hspace{0.5mm} \log |\Phi^{jj}_{q_1 q_2}(0)|  - \alpha_{q_1 q_2}\Big).
\end{equation}
By Lemma \ref{l:cov_and_eigenstructure_wavelet_transf_and_var}, $(ii)$ (below), and an application of the mean value theorem, for some $\theta(n) > 0$ between $\Big|\EE{S^{(q_1 q_2)}_n(a2^j)}\Big|/a^{\alpha_{q_1 q_2}}$ and $|\Phi^{jj}_{q_1 q_2}(0)|$ the second term in the sum \eqref{e:sqrt(n/a)(alphahat-alpha)} can be bounded in absolute value by
$$
\sum^{j_2}_{j=j_1} \frac{w_j}{a^{\delta_{q_1 q_2}}} \hspace{0.5mm}\sqrt{\frac{n}{a}} \Big\{\frac{1}{\theta(n) }\Big|\frac{\Big|\EE{S^{(q_1 q_2)}_n(a2^j)}\Big|}{a^{\alpha_{q_1 q_2}}} -  |\Phi^{jj}_{q_1 q_2}(0)|\Big|\Big\}
$$
$$
= \sum^{j_2}_{j=j_1} \frac{w_j}{a^{\delta_{q_1 q_2}}}  \hspace{0.5mm}\sqrt{\frac{n}{a}}\frac{1}{|\Phi^{jj}_{q_1 q_2}(0)|+ o(1)} \hspace{0.5mm}\frac{C_j}{a^{\varpi_0}} \leq C' \sqrt{\frac{n}{a^{2 \delta_{q_1 q_2}+1 + 2 \varpi_0}}} \rightarrow 0,
$$
as $n \rightarrow \infty$, where the limit is a consequence of condition \eqref{e:assumption_a(n)_n}. Also note that, after a change of variable $2^j x = y$ in the expression for $\Phi^{jj}_{q_1 q_2}(0)$ (see \eqref{e:Phi(j,j')q1q2(z)}),
$$
\Phi^{jj}_{q_1 q_2}(0) = 2^{j \alpha_{q_1 q_2}} \rho_{q_1 q_2} \int_{\bbR} |y|^{-(\alpha_{q_1 q_2}+1)}|\widehat{\psi}(y)|^2 dy
=: 2^{j \alpha_{q_1 q_2}} c_{q_1 q_2} \in \bbR.
$$
Therefore, by \eqref{e:sum_wj=0,sum_jwj=1}, the third term in the sum \eqref{e:sqrt(n/a)(alphahat-alpha)} can be written as
$$
\frac{1}{a^{\delta_{q_1 q_2}}}\sqrt{\frac{n}{a}}\Big(\sum^{j_2}_{j=j_1} w_j \hspace{0.5mm} (j \alpha_{q_1 q_2} + \log |c_{q_1 q_2}|)  - \alpha_{q_1 q_2} \Big) = 0.
$$
So, in regard to the first term in the sum \eqref{e:sqrt(n/a)(alphahat-alpha)}, consider the weight matrix $M \in M(\frac{m(m+1)}{2},\frac{m(m+1)}{2}J,\bbR)$ defined by
$$
\Big(2^{j_1/2}w_{j_1}I_{\frac{m(m+1)}{2}} \hspace{1mm}; 2^{(j_1+1)/2}w_{j_1+1}I_{\frac{m(m+1)}{2}}\hspace{1mm}; \hspace{2mm}\hdots \hspace{2mm}; 2^{j_2/2}w_{j_2} I_{\frac{m(m+1)}{2}}\Big),
$$
where $I_{\frac{m(m+1)}{2}}$ is a $\frac{m(m+1)}{2} \times  \frac{m(m+1)}{2}$ identity matrix and $J$ is given by \eqref{e:J=j2-j1+1}. We would like to show that
\begin{equation}\label{e:sqrt(n/a)(MWn-1)->Normal}
M\Big(  \textnormal{vec}_{{\mathcal S}}\Big[\Big( \frac{\sqrt{n_{a,j}}}{a^{ \delta_{q_1 q_2}}} \Big)_{q_1,q_2 = 1,\hdots,m} \circ \hspace{2mm}\log \circ | W_{n}(a2^j)|\Big]  \Big)_{j=j_1,\hdots,j_2}\stackrel{d}\rightarrow \NOR_{\frac{m(m+1)}{2}  }(0,MGM^*),
\end{equation}
where $\log \circ | A| := \Big(\log |A_{q_1 q_2}| \Big)_{q_1,q_2 = 1,\hdots,m}$ for any $m \times m$ real matrix $A$, and the term post-multiplying the matrix $M$ in \eqref{e:sqrt(n/a)(MWn-1)->Normal} is a $\frac{m(m+1)}{2} J$-dimensional random vector.

For any $0 < r < 1$, fix a pair $q_1$, $q_2$ and an octave $j$, which specifies one of the entries of the random vector on the left-hand side of \eqref{e:sqrt(n/a)(MWn-1)->Normal}. Define the set $A_n = \{\omega: \min_{q_1,q_2} W^{(q_1 q_2)}_n(a2^j) > r\}$. Under \eqref{e:mu_j>0}, Proposition \ref{p:4th_moments_wavecoef}, $(vi)$, implies that $P(A_n) \rightarrow 1$ as $n \rightarrow \infty$. Thus, for large enough $n$, in the set $A_n$ the mean value theorem gives the almost sure expression
\begin{equation}\label{e:logWq1q2_MVT-based_expansion}
\bbR \ni 2^{j/2}w_j \frac{\sqrt{n_{a,j}}}{a^{ \delta_{q_1 q_2}}}\log | W^{(q_1 q_2)}_n(a2^j)|
= 2^{j/2}w_j\frac{\sqrt{n_{a,j}}}{a^{ \delta_{q_1 q_2}}}\frac{(W^{(q_1 q_2)}_n(a2^j)-1)}{{\theta_{+}(W^{(q_1 q_2)}_n(a2^j))}}
\end{equation}
for a random variable $\theta_{+}(W^{(q_1 q_2)}_n(a2^j))$ between $W^{(q_1 q_2)}_n(a2^j)$ and 1. Since $W^{(q_1 q_2)}_n(a2^j) \stackrel{P}\rightarrow 1$, then $\theta_+(W^{(q_1 q_2)}_n(a2^j)) \stackrel{P}\rightarrow 1$. By considering \eqref{e:logWq1q2_MVT-based_expansion} for all $1 \leq q_1 \leq q_2 \leq m$ and $j  = j_1,\hdots,j_2$, relation \eqref{e:sqrt(n/a)(MWn-1)->Normal} is now a consequence of Proposition \ref{p:4th_moments_wavecoef}, ($vi$), and Slutsky's theorem. $\Box$\\

\section{Section \ref{sec:CI}}\label{s:CI_appendix}

\subsection{General results}

For $j = j_1,\hdots,j_2$ and $n \in \bbN$, consider the jointly Gaussian vector
$$
{\boldsymbol {\mathcal Y}}_{n} = \Big(\frac{d_{1}(a2^{j_1},1)}{\sqrt{\varrho^{(12)}(a2^{j_1})}},\hdots,\frac{d_{1}(a2^{j_1},n_{a,j_1})}{\sqrt{\varrho^{(12)}(a2^{j_1})}},
\hdots, \frac{d_{1}(a2^{j_2},1)}{\sqrt{\varrho^{(12)}(a2^{j_2})}},\hdots, \frac{d_{1}(a2^{j_2},n_{a,j_2})}{\sqrt{\varrho^{(12)}(a2^{j_2})}},
$$
\begin{equation}\label{e:Wn}
\frac{d_{2}(a2^{j_1},1)}{\sqrt{\varrho^{(12)}(a2^{j_1})}}, \hdots ,\frac{d_{2}(a2^{j_1},n_{a,j_1})}{\sqrt{\varrho^{(12)}(a2^{j_1})}},\hdots,
\frac{d_{2}(a2^{j_2},1)}{\sqrt{\varrho^{(12)}(a2^{j_2})}}, \hdots, \frac{d_{2}(a2^{j_2},n_{a,j_2})}{\sqrt{\varrho^{(12)}(a2^{j_2})}} \Big)^* \in \bbR^{\Upsilon(n)}
\end{equation}
(see \eqref{e:Upsilon(n)} for the definition of $\Upsilon(n)$). Let
\begin{equation}\label{e:Gamma_n_spectral}
\Gamma_{{\boldsymbol {\mathcal Y}}_n} = \EE{{\boldsymbol {\mathcal Y}}_{n}{\boldsymbol {\mathcal Y}}^*_{n}} = O \Lambda O^* = O \hspace{1mm}\diag(\lambda_{1,{\boldsymbol {\mathcal Y}}},\hdots, \lambda_{\Upsilon(n),{\boldsymbol {\mathcal Y}}})\hspace{1mm}O^*, \quad O \in O(\Upsilon(n)),
\end{equation}
be the associated covariance matrix and its matrix spectral decomposition. The following lemma provides the finite-sample and asymptotic properties of the covariance structure of wavelet coefficients, both from $\Xi^{jj'}$ and ${\boldsymbol {\mathcal Y}}_n$. Note that such covariance structure does not in general correspond to a multivariate stationary stochastic process when multiple octaves $j$ are considered.
\begin{lemma}\label{l:cov_and_eigenstructure_wavelet_transf_and_var}
For $j = j_1,\hdots,j_2$, $q_1,q_2 = 1,\hdots,m$, and $n \in \bbN$, the following statements hold.
\begin{itemize}
\item [$(i)$] Consider $\Xi^{jj',a}(a(n)(2^{j}k-2^{j'}k')) = \EE{D(a(n)2^j,k)D(a(n)2^{j'},k')^*}$ (see \eqref{e:Phij,j'}). For $j = j'$ and $n \in \bbN$, there is a continuous spectral density $f_{\psi,n}(x)_{q_1 q_2}$ such that we can write
\begin{equation}\label{e:Xijj/standard}
\frac{\Xi^{jj,a}_{q_1 q_2}(a(n)2^{j}(k-k'))}{(a(n)2^j)^{2 h_{q_1 q_2}}} = \int^{\pi}_{-\pi} e^{i(k-k')x}f_{\psi,n}(x)_{q_1 q_2}\hspace{1mm}dx, \quad k,k'\in \bbZ.
\end{equation}
Moreover,
\begin{equation}\label{e:f_psi,n(x)=<C}
|f_{\psi,n}(x)_{q_1 q_2}| \leq C, \quad x \in (-\pi,\pi],
\end{equation}
for a constant $C > 0$ that does not depend on $n$.
\item [$(ii)$] Let $\Phi^{jj'}_{\cdot \cdot}(z)$ be as in \eqref{e:Phi(j,j')q1q2(z)}, $z \in \bbZ$, and fix any $0 < \xi < 1$. Then,
\begin{equation}\label{e:bound_for_deviation_integral_from_that_for_fBm}
\Big|\frac{\Xi^{jj',a}_{q_1 q_2}(za(n))}{a(n)^{2h_{q_1 q_2}}} - \Phi^{jj'}_{q_1 q_2}(z) \Big| \leq  \frac{C}{a(n)^{\varpi_0}}, \quad n \rightarrow \infty,
\end{equation}
where $\varpi_0$ and $\beta$ are defined in expressions \eqref{e:g_is_Schwartz} and \eqref{e:psihat_is_slower_than_a_power_function}, respectively.
\item [$(iii)$] Let ${\boldsymbol {\mathcal Y}}_n$ be as in \eqref{e:Wn}, and let $\lambda_{i,{\boldsymbol {\mathcal Y}}}$, $i = 1,\hdots,\Upsilon(n)$, be the eigenvalues of the covariance matrix $\Gamma_{{\boldsymbol {\mathcal Y}}_n}$ (see \eqref{e:Upsilon(n)} and \eqref{e:Gamma_n_spectral}). Then, for some $C > 0$,
\begin{equation}\label{e:max_eig=<a(n)^(2maxh1,h2-2h12)}
\max_{i=1,\hdots,\Upsilon(n)}\lambda_{i,{\boldsymbol {\mathcal Y}}}\leq C a(n)^{2 (\max\{h_{1},h_{2} \} - h_{12})}.
\end{equation}
\end{itemize}
\end{lemma}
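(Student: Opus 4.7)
The plan for part ($i$) is to specialize the harmonizable representation \eqref{e:ED(2j,k)D(2j',k')=based_on_harmonizable} to $j=j'$, perform the change of variable $y=a(n)2^j x$, and then periodize by folding $\bbR$ into translates of $[-\pi,\pi]$ to read off
$$f_{\psi,n}(x)_{q_1 q_2} = \rho_{q_1 q_2}\sum_{l\in\bbZ}|x+2\pi l|^{-(2h_{q_1 q_2}+1)}\,g_{q_1 q_2}\!\Big(\tfrac{x+2\pi l}{a(n)2^j}\Big)\,|\widehat{\psi}(x+2\pi l)|^2.$$
The uniform bound \eqref{e:f_psi,n(x)=<C} will follow by splitting off the $l=0$ summand—whose singularity $|x|^{-(2h_{q_1 q_2}+1)}$ near the origin is absorbed by the vanishing-moment estimate $|\widehat{\psi}(x)|^2\leq C|x|^{2N_\psi}$ of \eqref{e:psi^_around_zero_u^_around_pi}, and which is controlled away from the origin by \eqref{e:psi^,psi^',psi^''_are_bounded}—from the $|l|\geq 1$ tail, which is summable thanks to $|\widehat{\psi}|\leq C(1+|\cdot|)^{-\beta}$ and $\beta>1$; \eqref{e:g_is_bounded} kills any $n$-dependence, and the uniform convergence of the series yields continuity.

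For part ($ii$), after the analogous change of variable $y=a(n)x$ the difference to estimate becomes
$$\rho_{q_1 q_2}\int_{\bbR} e^{izy}|y|^{-(2h_{q_1 q_2}+1)}\bigl(g_{q_1 q_2}(y/a(n))-1\bigr)\widehat{\psi}(2^jy)\overline{\widehat{\psi}(2^{j'}y)}\,dy,$$
whose modulus is independent of $z$. I would split the integration domain into $|y|<a(n)\varepsilon_0$ and $|y|\geq a(n)\varepsilon_0$. On the former, \eqref{e:g_is_Schwartz} contributes a prefactor $|y/a(n)|^{\varpi_0}$; the residual integral $\int|y|^{\varpi_0-2h_{q_1 q_2}-1}|\widehat{\psi}(2^jy)\widehat{\psi}(2^{j'}y)|dy$ is finite because the integrand behaves like $|y|^{\varpi_0-2h_{q_1 q_2}-1+2N_\psi}$ near zero and like $|y|^{\varpi_0-2h_{q_1 q_2}-1-2\beta}$ at infinity, the latter integrable since $\varpi_0\leq 2(1+h_{\min})$ and $\beta>1$ together give $\varpi_0<2h_{q_1 q_2}+2\beta$. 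This contributes $O(a(n)^{-\varpi_0})$. On the outer region, using $|g_{q_1 q_2}-1|\leq C$ and the tail decay of $\widehat{\psi}$ yields a contribution of order $a(n)^{-2h_{q_1 q_2}-2\beta} = O(a(n)^{-\varpi_0})$, by the same inequality.

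For part ($iii$), the plan is to factor $\Gamma_{{\boldsymbol{\mathcal Y}}_n}=D\Gamma'D$, where $D$ is diagonal with entries $1/\sqrt{\varrho^{(12)}(a(n)2^j)}$ and $\Gamma'=\EE{\textbf{Y}_n\textbf{Y}_n^*}$ for $\textbf{Y}_n$ as in \eqref{e:Yn}, so that $\lambda_{\max}(\Gamma_{{\boldsymbol{\mathcal Y}}_n})\leq \lambda_{\max}(D^2)\,\lambda_{\max}(\Gamma')$. Part ($ii$) specialized to $(j'=j,\,z=0,\,q_1=1,\,q_2=2)$ gives $\varrho^{(12)}(a(n)2^j)\sim (a(n)2^j)^{2h_{12}}\Phi^{jj}_{12}(0)$, hence $\lambda_{\max}(D^2)\leq C\,a(n)^{-2h_{12}}$ for large $n$ and $j$ in the fixed range $\{j_1,\hdots,j_2\}$. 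To bound $\lambda_{\max}(\Gamma')$ I would expand $u^*\Gamma' u=\EE{|\sum_{q,j,k}u_{q,j,k}d_q(a(n)2^j,k)|^2}$ and apply Cauchy--Schwarz over the finite $(q,j)$ index set, at the cost of the constant $2J$, reducing to the bound of $\sum_{q,j}\EE{|\sum_k u_{q,j,k}d_q(a(n)2^j,k)|^2}$. Each single-scale, single-component quadratic form is Toeplitz, and part ($i$) provides the uniform-in-$n$ symbol bound which gives $\EE{|\sum_k u_{q,j,k}d_q(a(n)2^j,k)|^2}\leq C(a(n)2^j)^{2h_q}\|u_{q,j,\cdot}\|^2$; summation over the fixed range then yields $\lambda_{\max}(\Gamma')\leq C\,a(n)^{2\max\{h_1,h_2\}}$, and combining with the bound on $\lambda_{\max}(D^2)$ delivers \eqref{e:max_eig=<a(n)^(2maxh1,h2-2h12)}.

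The main technical obstacle is part ($iii$): parts ($i$) and ($ii$) boil down to essentially one-dimensional Fourier estimates, whereas ($iii$) has to control a very large covariance matrix that is jointly multiscale and multivariate. The key insight is to avoid a direct attack on the off-diagonal scale-blocks of $\Gamma'$—a Gershgorin or Schur-complement approach quickly becomes cumbersome—by decoupling the scale index through Cauchy--Schwarz on the finite set $(q,j)$, which is harmless because $J$ and $m$ are constants, and then invoking the uniform-in-$n$ Toeplitz symbol bound from part ($i$) at each single scale in isolation.
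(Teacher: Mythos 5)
Your proposal is correct and follows the same overall strategy as the paper's proof: periodization plus the vanishing-moment and decay estimates for ($i$); a near/far split of the frequency integral combined with \eqref{e:g_is_Schwartz} and the tail decay of $\widehat{\psi}$ for ($ii$); and a block-decoupling of the quadratic form, reduced to single-scale Toeplitz symbol bounds and the normalization $\varrho^{(12)}(a(n)2^j)\sim (a(n)2^j)^{2h_{12}}\Phi^{jj}_{12}(0)$, for ($iii$). Two execution details differ, both harmless and arguably cleaner on your side. In ($ii$) you split at $|y|=a(n)\varepsilon_0$ rather than at $a(n)^{1-\xi}$, so the outer region contributes $O(a(n)^{-2h_{q_1q_2}-2\beta})$ directly and the chain $2h_{q_1q_2}+2\beta>2h_{\min}+2\geq\varpi_0$ finishes the argument, avoiding the paper's intermediate bound $Ca(n)^{-\min\{\varpi_0,(1-\xi)(2h_{q_1q_2}+2\beta)\}}$ and the subsequent choice of small $\xi$. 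In ($iii$) you factor $\Gamma_{{\boldsymbol {\mathcal Y}}_n}=D\Gamma'D$ and decouple the $2J$ blocks via the $L^2$ triangle inequality plus Cauchy--Schwarz, whereas the paper keeps the normalization inside the vector, expands the sixteen block products of the quadratic form, and applies $x\bar{y}+\bar{x}y\leq|x|^2+|y|^2$ pairwise before periodizing; your route only ever needs the diagonal symbols $f_{\psi,n}(x)_{qq}$ from part ($i$), not the cross-component ones, at the price of an extra factor $2J$ in the constant. One point worth making explicit in a final write-up: the step $\lambda_{\max}(D^2)\leq Ca(n)^{-2h_{12}}$ relies on $\Phi^{jj}_{12}(0)\neq 0$, i.e., on $\rho_{12}\neq 0$, which is guaranteed here by the standing assumption \eqref{e:rho_q1q2_neq_0} of the appendix (together with the sign convention \eqref{e:mu_j>0}).
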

\begin{proof}
We will use the shorthand $q_1 = 1$ and $q_2 = 2$ throughout the proof.

We first show $(i)$. By making the change of variable $y = a2^jx$ in \eqref{e:ED(2j,k)D(2j',k')=based_on_harmonizable}, we obtain \eqref{e:Xijj/standard} with
\begin{equation}\label{e:f_psi(x)_discrete}
f_{\psi,n}(x)_{12} := \sum^{\infty}_{l = - \infty} \frac{|\widehat{\psi}(x + 2 \pi l)|^2}{|x + 2 \pi l|^{2 h_{12} + 1}} \hspace{1mm}\rho_{12}g_{12}\Big(\frac{x + 2 \pi l}{a2^j}\Big).
\end{equation}
Moreover, by \eqref{e:N_psi}--\eqref{e:psihat_is_slower_than_a_power_function}, $\widehat{\psi}$ is continuous, and hence, by \eqref{e:g_is_bounded}, \eqref{e:psihat_is_slower_than_a_power_function}, \eqref{e:psihat_deriv=0} and the dominated convergence theorem, the periodic function $f_{\psi,n}(x)_{12}$ in \eqref{e:f_psi(x)_discrete} is also continuous on $[-\pi,\pi]$, and hence, bounded. This shows \eqref{e:Xijj/standard} and \eqref{e:f_psi,n(x)=<C}.

We now turn to $(ii)$. It suffices to show that, for any $0 < \xi < 1$,
\begin{equation}\label{e:bound_for_deviation_integral_from_that_for_fBm-0<xi<1}
\Big|\frac{\Xi^{jj',a}_{1 2}(za)}{a^{2h_{1 2}}} - \Phi^{jj'}_{1 2}(z) \Big| \leq  \frac{C}{a^{\min\{\varpi_0,(1 - \xi)(2h_{1 2}+ 2 \beta)\}}}, \quad n \rightarrow \infty,
\end{equation}
since for small enough $\xi$, conditions \eqref{e:hmax<delta0=<2(1+hmin)} and \eqref{e:psihat_is_slower_than_a_power_function} imply that
$$
(1 - \xi)(2h_{1 2}+ 2 \beta )> 2 h_{\min} + 2 \geq \varpi_0.
$$
In fact, by \eqref{e:ED(2j,k)D(2j',k')=based_on_harmonizable}, \eqref{e:Phi(j,j')q1q2(z)} and a change of variable $y = ax$,
$$
\frac{\Xi^{jj',a}_{12}(za)}{a^{2h_{12}}} - \Phi^{jj'}_{12}(z)
$$
$$
= \frac{1}{a^{2 h_{12}}} \int_{\bbR} e^{i z ax}|x|^{-(2h_{12}+1)} \rho_{12}\{g_{12}(x) - 1\} \widehat{\psi}(a2^jx)\overline{\widehat{\psi}(a2^{j'}x)}dx
$$
\begin{equation}\label{e:deviation_integral_from_that_for_fBm}
= \Big\{\int_{|y| \leq a^{1- \xi}}+\int_{|y| > a^{1- \xi}}\Big\}
e^{i z y}|y|^{-(2h_{12}+1)} \rho_{12}\Big\{g_{12}\Big(\frac{y}{a} \Big) - 1\Big\} \widehat{\psi}(2^j y)\overline{\widehat{\psi}(2^{j'} y)}dy
\end{equation}
for any $0 < \xi < 1$. For large enough $n$, by \eqref{e:g_is_Schwartz}, \eqref{e:hmax<delta0=<2(1+hmin)} and \eqref{e:N_psi}--\eqref{e:psihat_is_slower_than_a_power_function}, the absolute value of the first integral in the sum \eqref{e:deviation_integral_from_that_for_fBm} can be bounded by
\begin{equation}\label{e:deviation_integral_from_that_for_fBm_1st_term}
\frac{C}{a^{\varpi_0}}\int_{|y| \leq a^{1 - \xi}} |y|^{-(2 h_{12}+1) + \varpi_0} |\widehat{\psi}(2^j y)\overline{\widehat{\psi}(2^{j'} y)}|dy \leq \frac{C'}{a^{\varpi_0}}.
\end{equation}
On the other hand, by \eqref{e:W2} the absolute value of the second term in the sum \eqref{e:deviation_integral_from_that_for_fBm} can be bounded by
\begin{equation}\label{e:deviation_integral_from_that_for_fBm_2nd_term}
C \int_{|y|> a^{1 - \xi}} |y|^{-(2 h_{12}+1) - 2 \beta}dy \leq \frac{C'}{a^{(1 - \xi)(2 h_{12} +2 \beta)}}.
\end{equation}
Expressions \eqref{e:deviation_integral_from_that_for_fBm_1st_term} and \eqref{e:deviation_integral_from_that_for_fBm_2nd_term} yield \eqref{e:bound_for_deviation_integral_from_that_for_fBm}.\\

We turn to $(iii)$. For notational simplicity, consider only two octaves $j,j'$, whence $J = j_2 - j_1 + 1 = 2$. Then, from \eqref{e:Upsilon(n)},
$$
\Upsilon(n) = 2(n_{a,j_1}+n_{a,j_2}).
$$
Fix ${\mathbf v} \in \bbC^{\Upsilon(n)}$. For notational simplicity, divide the summation range $k = 1,\hdots, \Upsilon(n)$ into the subranges
$$
K_1 = \{1,\hdots,n_{a,j}\}, \quad K_2 = \{n_{a,j}+1,\hdots,n_{a,j}+n_{a,j'}\},
$$
$$
K_3 = \{n_{a,j}+n_{a,j'}+1,\hdots,2n_{a,j}+n_{a,j'}\}, \quad K_4 = \{2n_{a,j}+n_{a,j'}+1,\hdots,2(n_{a,j}+n_{a,j'})\}.
$$
Define the octave and index functions
$$
j(k) = j 1_{\{ K_1 \cup K_3 \}}(k) + j' 1_{\{K_2 \cup K_4 \}}(k), \quad q(k) = 1 \hspace{1mm}1_{\{K_1 \cup K_2 \}}(k) + 2 \hspace{1mm} 1_{\{K_3 \cup K_4 \}}(k),
$$
respectively, which reflects the order of appearance of different octave and index values in the vector \eqref{e:Wn}. By \eqref{e:ED(2j,k)D(2j',k')=based_on_harmonizable},
$$
{\mathbf v}^* \Gamma_{{\boldsymbol {\mathcal Y}}_n}{\mathbf v} = \sum^{\Upsilon(n)}_{k_1 =1}\sum^{\Upsilon(n)}_{k_2 =1}v_{k_1}\overline{v}_{k_2}\Big( \Gamma_{{\boldsymbol {\mathcal Y}}_n}\Big)_{k_1,k_2}
$$
\begin{equation}\label{e:v*Gammav_bound}
= \int_{\bbR} \sum^{\Upsilon(n)}_{k_1 =1} \sum^{\Upsilon(n)}_{k_2 =1} \frac{v_{k_1}\overline{v}_{k_2}e^{ia(2^{j(k_1)}k_1 -2^{j(k_2)}k_2)x}\widehat{\psi}(a2^{j(k_1)}x)\overline{\widehat{\psi}(a2^{j(k_2)}x)}}{\sqrt{\varrho^{(12)}(a2^{j(k_1)})\varrho^{(12)}(a2^{j(k_2)})}}f_{q(k_1)q(k_2)}(x) dx
\end{equation}
By expanding the double summation in \eqref{e:v*Gammav_bound} into each pair in the Cartesian product $\{K_1, K_2, K_3, K_4\}^2$, we end up with 16 double summation terms under the sign of the integral, i.e., 8 pairs of conjugates. To the cross terms, namely, those involving distinct summation ranges in the index $k$, we can apply the elementary inequality $ x\overline{y} + \overline{x}y\leq |x|^2 + |y|^2$. One such pair, associated with the summation regions $K_2 \times K_3$ and $K_3 \times K_2$, is
$$
\int_{\bbR}\Big(\sum_{k_1 \in K_3} \frac{v_{k_1}e^{ia2^{j}k_1 x}\widehat{\psi}(a2^{j}x)}{\sqrt{\varrho^{(12)}(a2^{j})}}\sum_{k_2 \in K_2}\frac{\overline{v}_{k_2}e^{-ia2^{j'}k_2 x}\overline{\widehat{\psi}(a2^{j'}x)}}{\sqrt{\varrho^{(12)}(a2^{j'})}}
$$
$$
+\sum_{k_1 \in K_2}\frac{v_{k_1}e^{ia2^{j'}k_1 x}\widehat{\psi}(a2^{j'}x)}{\sqrt{\varrho^{(12)}(a2^{j'})}}
\sum_{k_2 \in K_3} \frac{\overline{v}_{k_2}e^{-ia2^{j}k_2 x}\overline{\widehat{\psi}(a2^{j}x)}}{\sqrt{\varrho^{(12)}(a2^{j})}}\Big)f_{12}(x)dx
$$
$$
\leq \int_{\bbR}\Big\{\Big|\sum_{k \in K_3} \frac{v_{k}e^{ia2^{j}k x}\widehat{\psi}(a2^{j}x)}{\sqrt{\varrho^{(12)}(a2^{j})}}
 \Big|^2
 + \Big|\sum_{k \in K_2 }\frac{\overline{v}_{k}e^{-ia2^{j'}k x}\overline{\widehat{\psi}(a2^{j'}x)}}{\sqrt{\varrho^{(12)}(a2^{j'})}}\Big|\Big\}^2 f_{12}(x)dx,
$$
since $f_{12}(x) = f_{21}(x)$, and analogous bounds hold for the remaining terms. Therefore, \eqref{e:v*Gammav_bound} is bounded by
$$
\int_{\bbR} \Big|\sum_{k \in K_1} v_{k}e^{ia2^{j}kx}\Big|^2 \frac{|\widehat{\psi}(a2^{j}x)|^2}{\varrho^{(12)}(a2^{j})} (f_{11}(x) + 3 f_{12}(x))dx
$$
$$
+ \int_{\bbR} \Big|\sum_{k \in K_2} v_{k}e^{ia2^{j'}kx}\Big|^2 \frac{|\widehat{\psi}(a2^{j'}x)|^2}{\varrho^{(12)}(a2^{j'})} (f_{11}(x) + 3 f_{12}(x))dx
$$
$$
+ \int_{\bbR} \Big|\sum_{k \in K_3} v_{k}e^{ia2^{j}kx}\Big|^2 \frac{|\widehat{\psi}(a2^{j}x)|^2}{\varrho^{(12)}(a2^{j})} (f_{22}(x)+ 3 f_{12}(x))dx
$$
$$
+ \int_{\bbR}  \Big|\sum_{k \in K_4} v_{k}e^{ia2^{j'}kx}\Big|^2 \frac{|\widehat{\psi}(a2^{j'}x)|^2}{\varrho^{(12)}(a2^{j'})} (f_{22}(x)+ 3 f_{12}(x))dx.
$$
By the change of variable $y = a2^{j(k)}x$ in each integral, breaking them up (in $\bbR$) into subregions of length $2\pi$ and using the periodicity of Fourier sums, we obtain
$$
\int^{\pi}_{-\pi} \Big|\sum_{k \in K_1} v_{k}e^{iky}\Big|^2 \sum^{\infty}_{l= -\infty}\Big\{(a2^j)^{2h_1}|y+ 2 \pi l|^{-(2h_1 +1)}\rho_1 g_1\Big(\frac{y+ 2\pi l}{a2^j}\Big)
$$
$$
+ 3 (a2^j)^{2h_{12}}|y+ 2 \pi l|^{-(2h_{12} +1)}\rho_{12} g_{12}\Big(\frac{y+ 2 \pi l}{a2^j}\Big)\Big\}\frac{|\widehat{\psi}(y + 2 \pi l )|^2}{\varrho^{(12)}(a2^{j})} dy
$$
$$
+  \int^{\pi}_{- \pi}  \Big|\sum_{k \in K_2} v_{k}e^{iky}\Big|^2 \sum^{\infty}_{l = -\infty}\Big\{(a2^j)^{2h_1}|y+2 \pi l|^{-(2h_1 +1)}\rho_1 g_1\Big(\frac{y+ 2 \pi l}{a2^{j'}}\Big)
$$
$$
+3(a2^j)^{2h_{12}}|y+ 2 \pi l|^{-(2h_{12} +1)}\rho_{12} g_{12}\Big(\frac{y+ 2 \pi l}{a2^{j'}}\Big) \Big\}\frac{|\widehat{\psi}(y+ 2 \pi l)|^2}{\varrho^{(12)}(a2^{j'})} dy
$$
$$
+ \int^{\pi}_{- \pi}  \Big|\sum_{k \in K_3} v_{k}e^{iky}\Big|^2 \sum^{\infty}_{l = - \infty}\Big\{(a2^j)^{2h_2}|y+ 2 \pi l|^{-(2h_2 +1)}\rho_2 g_2\Big(\frac{y+ 2 \pi l}{a2^{j}}\Big) $$
$$
+ 3 (a2^j)^{2h_{12}}|y+ 2 \pi l|^{-(2h_{12} +1)}\rho_{12} g_{12}\Big(\frac{y+ 2 \pi l}{a2^{j}}\Big)\Big\}\frac{|\widehat{\psi}(y+ 2 \pi l)|^2}{\varrho^{(12)}(a2^{j})} dy
$$
$$
+ \int^{\pi}_{-\pi}  \Big|\sum_{k \in K_4} v_{k}e^{iky}\Big|^2 \sum^{\infty}_{l= - \infty}\Big\{(a2^j)^{2h_2}|y+ 2 \pi l|^{-(2h_2 +1)}\rho_2 g_2\Big(\frac{y+ 2 \pi l}{a2^{j'}}\Big)
$$
$$
+ 3(a2^j)^{2h_{12}}|y+ 2 \pi l|^{-(2h_{12} +1)}\rho_{12} g_{12}\Big(\frac{y+ 2 \pi l}{a2^{j'}}\Big)\Big\}\frac{|\widehat{\psi}(y+ 2 \pi l)|^2}{\varrho^{(12)}(a2^{j'})}  dy
$$
\begin{equation}\label{e:bound_cov_matrix_Wn}
\leq C a^{2(\max\{h_1,h_2\}-h_{12})} \int^{\pi}_{-\pi} \Big\{\Big|\sum_{k \in K_1} v_{k}e^{iky}\Big|^2 + \Big|\sum_{k \in K_2} v_{k}e^{iky}\Big|^2
\end{equation}
$$
+ \Big|\sum_{k \in K_3} v_{k}e^{iky}\Big|^2
+\Big|\sum_{k \in K_4} v_{k}e^{iky}\Big|^2 \Big\} dy
$$
$$
= C a^{2(\max\{h_1,h_2\}-h_{12})} {\mathbf v}^* {\mathbf v}
$$
for some $C > 0$. The inequality \eqref{e:bound_cov_matrix_Wn} is a consequence of \eqref{e:Xijj/standard} (from Lemma \ref{l:cov_and_eigenstructure_wavelet_transf_and_var}, $(i)$) and Proposition \ref{p:4th_moments_wavecoef}, $(iii)$, as applied to $\varrho^{(12)}(a2^{j(k)})$. Thus, the claim \eqref{e:max_eig=<a(n)^(2maxh1,h2-2h12)} holds. $\Box$\\
\end{proof}

\begin{remark}For $q_1 = 1$ and $q_2 = 2$, let ${\mathbf Y}_n$ and ${\boldsymbol {\mathcal Y}}_n$ be as in \eqref{e:Yn} and \eqref{e:Wn}, respectively. Then,
$$
{\mathbf Y}_n = {\mathcal P}_n \textnormal{diag}\Big(\underbrace{\sqrt{\varrho^{(12)}(a2^{j_1})},\hdots,\sqrt{\varrho^{(12)}(a2^{j_1})}}_{n_{a,j_1}},
\hdots, \underbrace{\sqrt{\varrho^{(12)}(a2^{j_2})},\hdots, \sqrt{\varrho^{(12)}(a2^{j_2})}}_{n_{a,j_2}},
$$
$$
\underbrace{\sqrt{\varrho^{(12)}(a2^{j_1})}, \hdots ,\sqrt{\varrho^{(12)}(a2^{j_1})}}_{n_{a,j_1}},\hdots,
\underbrace{\sqrt{\varrho^{(12)}(a2^{j_2})}, \hdots, \sqrt{\varrho^{(12)}(a2^{j_2})}}_{n_{a,j_2}} \Big){\boldsymbol {\mathcal Y}}_n
$$
$$
=: {\mathcal P}_n  N_n {\boldsymbol {\mathcal Y}}_n
$$
for some permutation matrix ${\mathcal P}_n $. Moreover, since $\Gamma_{{\textbf Y}_n} = {\mathcal P}_n  N_n \Gamma_{{\boldsymbol {\mathcal Y}}_n}N_n {\mathcal P}^*_n $ is a real symmetric matrix, by Lemma \ref{l:cov_and_eigenstructure_wavelet_transf_and_var}, $(ii)$ and $(v)$, we obtain the bound
\begin{equation}\label{e:max_ki=<decay}
\|\Gamma_{{\textbf Y}_n}\| = \|{\mathcal P}_n  N_n \Gamma_{{\boldsymbol {\mathcal Y}}_n}N_n {\mathcal P}^*_n \| \leq C a(n)^{2\max\{h_{1},h_{2}\}}
\end{equation}
for the maximum eigenvalue of $\Gamma_{{\textbf Y}_n}$, where $\|\cdot\|$ is the matrix Euclidean norm.
\end{remark}

For any $n \in \bbN$, consider the Gaussian vector ${\boldsymbol {\mathcal Y}}_{n}$ as in \eqref{e:Wn} but with only one octave $j$. It will be convenient to reexpress the sum $W^{(12)}_{n}(a2^j)$ as in \eqref{e:Sq1q2} (with $q_1 = 1$ and $q_2 =2$) based on a quadratic form. Define the permutation matrix
\begin{equation}\label{e:Pi}
\Pi_n = \left(\begin{array}{cc}
{\mathbf 0} & I_{n_{a,j}} \\
I_{n_{a,j}} & {\mathbf 0}
\end{array}\right) \in O(2 n_{a,j}).
\end{equation}
Consider the spectral decomposition \eqref{e:Gamma_n_spectral}. Then,
$$
W^{(12)}_{n}(a2^j) = \frac{1}{2n_{a,j}} {\boldsymbol {\mathcal Y}}^*_n \hspace{0.5mm} \Pi_n \hspace{0.5mm} {\boldsymbol {\mathcal Y}}_n \quad \textnormal{(for one octave $j$)}
$$
\begin{equation}\label{e:sum_q_1,q_2}
\stackrel{d}=
\frac{1}{n_{a,j}}\hspace{0.5mm} {\mathbf Z}^{*} \hspace{0.5mm}\frac{( O \Lambda^{1/2} O^*) \Pi_n (O \Lambda^{1/2}O^*)}{2}\hspace{0.5mm}{\mathbf Z}
\stackrel{d}= \frac{1}{ n_{a,j}}\hspace{0.5mm} {\mathbf Z}^* \hspace{0.5mm}\frac{\Lambda^{1/2} O^* \Pi_n O \Lambda^{1/2}}{2} \hspace{0.5mm}{\mathbf Z},
\end{equation}
where ${\mathbf Z} \sim N(0,I_{2 n_{a,j}})$. Let
\begin{equation}\label{e:Sn}
S_n = \frac{\Lambda^{1/2}O^* \Pi_n O \Lambda^{1/2}}{2},
\end{equation}
which is a real symmetric matrix. Write out its spectral decomposition
\begin{equation}\label{e:Sn=OS_LambdaS_O*S}
S_n = O_S \Lambda_S O^*_{S}, \quad O \in O(2n_{a,j}), \quad \Lambda_{S} = \textnormal{diag}(\lambda_{1,S}, \hdots,\lambda_{2n_{a,j},S}).
\end{equation}
Expression \eqref{e:sum_q_1,q_2} becomes
$$
W^{(12)}_{n}(a2^j) = \frac{1}{n_{a,j}}{\mathbf Z}^* O_S \Lambda_S O^*_{S} {\mathbf Z} \stackrel{d}= \frac{1}{n_{a,j}}{\mathbf Z}^* \Lambda_S {\mathbf Z}
$$
\begin{equation}\label{e:sum_W=sum_lambda*Z2-lambda*Z2}
= \sum_{i \in i_{+}(n)} \frac{\lambda_{i,S}}{n_{a,j}} Z^2_{i} + \sum_{i \in i_{-}(n)} \frac{\lambda_{i,S}}{n_{a,j}} Z^2_{i}
=: \sum_{i \in i_+(n)} \eta_{i,n}Z^{2}_{i}  - \sum_{i \in i_{-}(n)} \eta_{i,n} Z^{2}_{i},
\end{equation}
where $i_{+}(n)$ and $i_{-}(n)$ are the indices for which $\lambda_{i,S}$ is nonnegative or negative, respectively. In particular, note that
$$
\VAR{ \hspace{0.5mm}W^{(12)}_n(a(n)2^j)} = \|{\boldsymbol \eta}_n\|^2_2,
$$
where
\begin{equation}\label{e:eta-n_vec}
{\boldsymbol \eta}_{n} := (\eta_{1,n},\hdots,\eta_{2n_{a,j},n})^*
\end{equation}
and $\eta_{i,n}$, $i=1,\hdots,2 n_{a,j}$, are as in expression \eqref{e:sum_W=sum_lambda*Z2-lambda*Z2}.

The following lemma is an exponential inequality for $\chi^2$-like distributions, and corresponds to Lemma 1 in \cite{laurent:massart:2000}. It will be used in the ensuing Lemma \ref{l:P(WN<e)->0} to establish a bound on the rate of convergence to zero of the probability $P(W^{(12)}_n(a2^j) \leq r)$, where $-\infty < r < 1/2$ (under \eqref{e:mu_j>0}).

\begin{lemma}\label{l:lemma_laurent_massart}
\cite{laurent:massart:2000} Let $Z_1,\hdots,Z_n \stackrel{\textnormal{i.i.d.}}\sim N(0,1)$ and $\eta_1,\hdots,\eta_n \geq 0$, not all zero. Let $\|{\boldsymbol \eta}\|_2$ and $\|{\boldsymbol \eta}\|_{\infty}$ be the Euclidean square and sup norms of the vector ${\boldsymbol \eta} = (\eta_1,\hdots,\eta_n)^*$. Also, define the random variable
$X = \sum^{n}_{i=1} \eta_{i,n} (Z^2_i - 1)$.
Then, for every $x > 0$,
\begin{equation}\label{e:laurent_massart_bound1}
P(X \geq 2 \|{\boldsymbol \eta}\|_2 \hspace{1mm}\sqrt{x}+ 2 \|{\boldsymbol \eta}\|_{\infty} \hspace{1mm}x) \leq \exp(-x),
\end{equation}
\begin{equation}\label{e:laurent_massart_bound2}
P(X \leq - 2 \|{\boldsymbol \eta}\|_2 \hspace{1mm}\sqrt{x} ) \leq \exp(-x).
\end{equation}
\end{lemma}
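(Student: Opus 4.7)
The plan is to use the classical Chernoff/Laplace-transform approach, exploiting the fact that for $Z\sim \NOR(0,1)$ one has the explicit moment generating function $\EE{\exp(\lambda(Z^2-1))}=\exp(-\lambda)/\sqrt{1-2\lambda}$ for $\lambda<1/2$. Writing $\psi(u):=-u-\tfrac{1}{2}\log(1-2u)=\sum_{k\ge 2}(2u)^{k}/(2k)$ for $|2u|<1$, by the independence of the $Z_i$ we get
$$
\log \EE{\exp(\lambda X)} \;=\; \sum_{i=1}^{n}\psi(\lambda \eta_i),\qquad \lambda<\tfrac{1}{2\|{\boldsymbol \eta}\|_\infty}.
$$
From here I would establish two one-sided bounds on $\psi$: for $u\ge 0$, $\psi(u)\le u^{2}/(1-2u)$ (by bounding the series $\sum_{k\ge 2}(2u)^{k}/(2k)$ by $(2u)^{2}/(4(1-2u))$), and for $u\le 0$, $\psi(u)\le u^{2}$ (using the elementary estimate $\log(1+v)\ge v-v^{2}/2$ for $v\ge 0$ with $v=-2u$).

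For the upper tail \eqref{e:laurent_massart_bound1}, the first bound and the non-negativity of the $\eta_i$ give, for $0<\lambda<1/(2\|{\boldsymbol \eta}\|_\infty)$,
$$
\log \EE{\exp(\lambda X)} \;\le\; \frac{\lambda^{2}\|{\boldsymbol \eta}\|_{2}^{2}}{1-2\lambda\|{\boldsymbol \eta}\|_\infty}.
$$
Combining this with Markov's inequality and choosing the explicit $\lambda_{*}=\sqrt{x}\big/(\|{\boldsymbol \eta}\|_{2}+2\sqrt{x}\,\|{\boldsymbol \eta}\|_\infty)$ (which automatically lies in $(0,1/(2\|{\boldsymbol \eta}\|_\infty))$), one checks by direct substitution that the exponent in the Chernoff bound equals exactly $-x$ at the point $t=2\|{\boldsymbol \eta}\|_{2}\sqrt{x}+2\|{\boldsymbol \eta}\|_\infty x$; this is the main (mildly non-obvious) calculation. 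For the lower tail \eqref{e:laurent_massart_bound2}, the second bound yields $\log \EE{\exp(\lambda X)}\le \lambda^{2}\|{\boldsymbol \eta}\|_{2}^{2}$ for every $\lambda<0$, so the standard sub-Gaussian optimization at $\lambda=-t/(2\|{\boldsymbol \eta}\|_{2}^{2})$ with $t=2\|{\boldsymbol \eta}\|_{2}\sqrt{x}$ gives the result immediately.

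The only real obstacle is verifying the two Taylor-type upper bounds on $\psi$ uniformly in sign, and then performing the optimization over $\lambda$ so that the constants $2$ and the mixed linear/quadratic structure $\|{\boldsymbol \eta}\|_{2}\sqrt{x}+\|{\boldsymbol \eta}\|_\infty x$ come out cleanly; everything else (independence, Markov, and the MGF of $Z^{2}$) is routine. This is exactly the argument carried out in Lemma 1 of \cite{laurent:massart:2000}, which is the result we are quoting here; I would reproduce it essentially verbatim, omitting the derivation of the chi-squared MGF since it is standard.
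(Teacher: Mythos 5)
Your proof is correct, and it is essentially the argument of Lemma 1 in Laurent--Massart (2000), which the paper simply quotes without proof; the two bounds on $\psi(u)=-u-\tfrac12\log(1-2u)$, the resulting sub-gamma/sub-Gaussian Laplace-transform estimates, and the explicit choice $\lambda_*=\sqrt{x}/(\|{\boldsymbol \eta}\|_2+2\sqrt{x}\,\|{\boldsymbol \eta}\|_\infty)$ (which one checks gives exponent exactly $-x$ and satisfies $\lambda_*<1/(2\|{\boldsymbol \eta}\|_\infty)$) all go through as you describe. Nothing to add.
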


\begin{lemma}\label{l:P(WN<e)->0}
Let $W^{(12)}_{n}(a2^j)$ be as in \eqref{e:sum_W=sum_lambda*Z2-lambda*Z2}, and fix $-\infty < r < 1/2$. Then, for any $0 < \xi < 1$,
\begin{equation}\label{e:P(Wbar=<bn0)=<exp(-nu(1+xi))}
P(W^{(12)}_{n}(a(n)2^j) \leq r) \leq \exp\Big\{ -\Big(\frac{n}{a^{2(h_{1}+h_{2}) - 4 h_{12}+1}} \Big)^{1 - \xi} \Big\}
\end{equation}
for large enough $n$.
\end{lemma}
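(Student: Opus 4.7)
The strategy is to apply the Gaussian $\chi^2$-type deviation inequalities of Lemma \ref{l:lemma_laurent_massart} to the decomposition \eqref{e:sum_W=sum_lambda*Z2-lambda*Z2} of $W^{(12)}_n(a(n)2^j)$, after controlling both the Euclidean norm and the sup norm of the coefficient vector ${\boldsymbol \eta}_n$ in \eqref{e:eta-n_vec}.

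Under \eqref{e:mu_j>0} we have $\EE{W^{(12)}_n(a2^j)} = \sum_i s_i\eta_{i,n} = 1$, so the event $\{W^{(12)}_n(a2^j) \leq r\}$ with $r < 1/2$ is recast as
\[
\Big\{\sum_{i \in i_-(n)}\eta_{i,n}(Z_i^2 - 1) - \sum_{i \in i_+(n)}\eta_{i,n}(Z_i^2 - 1) \geq 1-r\Big\}, \quad 1-r > 1/2.
\]
A union bound separates this into two events to which \eqref{e:laurent_massart_bound1} and \eqref{e:laurent_massart_bound2} apply with nonnegative coefficients, yielding $P(W^{(12)}_n(a2^j) \leq r) \leq 2 e^{-x}$ for any $x > 0$ such that $2\|{\boldsymbol \eta}_n\|_2 \sqrt{x} + 2\|{\boldsymbol \eta}_n\|_\infty x \leq (1-r)/2$.

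The Euclidean norm is controlled via $\|{\boldsymbol \eta}_n\|_2^2 = \VAR{W^{(12)}_n(a2^j)}/2$ combined with Proposition \ref{p:4th_moments_wavecoef}, $(v)$, giving $\|{\boldsymbol \eta}_n\|_2^2 \leq Ca(n)^{2\delta_{12}+1}/n$, where $\delta_{12} = h_1+h_2-2h_{12}$. The sup norm is the technical heart of the argument: it equals $(2n_{a,j})^{-1}\max_i|\lambda_i(\Pi_n\Gamma_{{\boldsymbol {\mathcal Y}}_n})|$, and the naive estimate $\max|\lambda_i(\Pi_n \Gamma_{{\boldsymbol {\mathcal Y}}_n})| \leq \|\Gamma_{{\boldsymbol {\mathcal Y}}_n}\| \leq Ca(n)^{2(\max\{h_1,h_2\}-h_{12})}$ provided by Lemma \ref{l:cov_and_eigenstructure_wavelet_transf_and_var}, $(iii)$, is insufficient whenever $h_{12} > \min\{h_1, h_2\}$ (one checks that for small $\xi$ it would be weaker than the desired bound). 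The required improvement exploits the block-off-diagonal structure $\Pi_n\Gamma_{{\boldsymbol {\mathcal Y}}_n} = \left(\begin{smallmatrix}(\Gamma_{{\boldsymbol {\mathcal Y}}_n})_{12}^* & (\Gamma_{{\boldsymbol {\mathcal Y}}_n})_{22}\\ (\Gamma_{{\boldsymbol {\mathcal Y}}_n})_{11} & (\Gamma_{{\boldsymbol {\mathcal Y}}_n})_{12}\end{smallmatrix}\right)$ together with the real-valuedness of its spectrum (via the similarity with $\Gamma_{{\boldsymbol {\mathcal Y}}_n}^{1/2}\Pi_n \Gamma_{{\boldsymbol {\mathcal Y}}_n}^{1/2}$). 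Conjugating by $D_c = \textnormal{diag}(c I_{n_{a,j}}, I_{n_{a,j}})$ preserves the eigenvalues for any $c > 0$; pairing a real eigenvector $(u_1, u_2)^*$ of the resulting matrix with its eigenvalue equation and bounding the cross-terms by Cauchy-Schwarz gives $|\lambda| \leq \|(\Gamma_{{\boldsymbol {\mathcal Y}}_n})_{12}\| + \tfrac{1}{2}\bigl(c\|(\Gamma_{{\boldsymbol {\mathcal Y}}_n})_{22}\| + \|(\Gamma_{{\boldsymbol {\mathcal Y}}_n})_{11}\|/c\bigr)$. Minimizing over $c$ via AM-GM and invoking the block-norm estimates $\|(\Gamma_{{\boldsymbol {\mathcal Y}}_n})_{qq}\| \leq Ca(n)^{2(h_q-h_{12})}$ and $\|(\Gamma_{{\boldsymbol {\mathcal Y}}_n})_{12}\| \leq C$ (which follow from the Toeplitz spectral representation in Lemma \ref{l:cov_and_eigenstructure_wavelet_transf_and_var}, $(i)$, together with $\varrho^{(12)}(a2^j) \sim Ca(n)^{2h_{12}}$) yields $\max_i|\lambda_i(\Pi_n\Gamma_{{\boldsymbol {\mathcal Y}}_n})| \leq C(1 + a(n)^{\delta_{12}}) \leq C'a(n)^{\delta_{12}}$ (using $\delta_{12}\geq 0$), hence $\|{\boldsymbol \eta}_n\|_\infty \leq Ca(n)^{\delta_{12}+1}/n$.

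Since $\delta_{12} \geq 0$ implies $a(n)^{2\delta_{12}+1} \geq a(n)^{\delta_{12}+1}$, the choice $x = c_0 \, n/a(n)^{2\delta_{12}+1}$ with $c_0 > 0$ sufficiently small fulfills both constraints, producing $P(W^{(12)}_n(a2^j) \leq r) \leq 2\exp\bigl(-c_0 n/a(n)^{2\delta_{12}+1}\bigr)$. Setting $y_n := n/a(n)^{2\delta_{12}+1}$, condition \eqref{e:assumption_a(n)_n} forces $y_n\to\infty$; therefore, for any fixed $0 < \xi < 1$ we have $y_n^{1-\xi} = o(y_n)$, whence $2e^{-c_0 y_n} \leq e^{-y_n^{1-\xi}}$ for $n$ large enough. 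Observing that $2\delta_{12}+1 = 2(h_1+h_2) - 4h_{12}+1$ concludes the proof. The principal obstacle throughout is the sharp spectral-radius bound on the non-symmetric matrix $\Pi_n\Gamma_{{\boldsymbol {\mathcal Y}}_n}$: the generic operator-norm bound on $\Gamma_{{\boldsymbol {\mathcal Y}}_n}$ is not tight enough, and the diagonal-rescaling trick described above is essential.
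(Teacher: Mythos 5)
Your proof is correct, and it shares the paper's skeleton: decompose $W^{(12)}_{n}(a2^j)$ into the positive- and negative-eigenvalue quadratic forms of \eqref{e:sum_W=sum_lambda*Z2-lambda*Z2} and apply the two tail bounds of Lemma \ref{l:lemma_laurent_massart}. It diverges in two substantive ways. First, you use a union bound over the two half-events, whereas the paper conditions on the independent negative-part variable and integrates; both work. Second, and more importantly, the paper controls the sup norm only through $\|{\boldsymbol \eta}_{n}\|_\infty \leq \|{\boldsymbol \eta}_{n}\|_2$ and then plugs in $x=(n/a^{2\delta_{12}+1})^{1-\xi}$, while you prove the genuinely sharper estimate $\|{\boldsymbol \eta}_{n}\|_\infty \leq C a^{\delta_{12}+1}/n$ by exploiting the block structure of $\Pi_n\Gamma_{{\boldsymbol {\mathcal Y}}_n}$, the reality of its spectrum, and the diagonal-rescaling/AM--GM argument. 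That extra work buys a true exponential bound $2\exp(-c_0\, n/a^{2\delta_{12}+1})$ rather than a stretched exponential, and it quietly repairs a soft spot in the paper's own argument: since $\Var[W^{(12)}_n(a2^j)]=2\|{\boldsymbol \eta}_n\|_2^2$, one has $\|{\boldsymbol \eta}_{2,n}\|_2\asymp (a^{2\delta_{12}+1}/n)^{1/2}$ (the displayed asymptotic in \eqref{e:bounds_norm_eta-n} conflates the norm with its square), so under the crude sup-norm bound the term $\|{\boldsymbol \eta}_{2,n}\|_\infty\, x$ in \eqref{e:P(Y>1-delta)_bound_LandM} is only $O((n/a^{2\delta_{12}+1})^{1/2-\xi})$ and tends to zero only for $\xi>1/2$; your spectral-radius estimate removes that restriction and delivers the claim for every $0<\xi<1$. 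The one step to make fully explicit is $\|(\Gamma_{{\boldsymbol {\mathcal Y}}_n})_{12}\|\leq C$, which follows, exactly as you indicate, from the bounded normalized cross spectral density in Lemma \ref{l:cov_and_eigenstructure_wavelet_transf_and_var}, $(i)$, together with $\varrho^{(12)}(a2^j)\asymp a^{2h_{12}}$ from Proposition \ref{p:4th_moments_wavecoef}, $(iii)$.
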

\begin{proof}
Expression \eqref{e:sum_W=sum_lambda*Z2-lambda*Z2} implies that
$$
P(W^{(12)}_{n}(a2^j) \leq r)
= P\Big( \sum_{i \in i_{+}(n)} \eta_{i,n}(Z^{2}_{i}-1)  \leq -1 + r + \sum_{i \in i_{-}(n)} \eta_{i,n}(Z^{2}_{i}-1) \Big).
$$
For notational simplicity, let $X_{n_{a,j}} = \sum_{i \in i_{+}(n)} \eta_{i,n}(Z^{2}_{i}-1)$ and $Y_{n_{a,j}} = \sum_{i \in i_{-}(n)} \eta_{i,n}(Z^{2}_{i}-1) $, which are zero mean random variables. Now fix $r < \delta < 1$. By the independence of $X_{n_{a,j}}$ and $Y_{n_{a,j}}$,
\begin{equation}\label{e:P(X=<-1+Y)=two_integrals}
P( X_{n_{a,j}} \leq -1 + r  + Y_{n_{a,j}}) = \Big\{ \int^{1-\delta }_{-\infty} + \int^{\infty}_{1-\delta} \Big\} P(X_{n_{a,j}} \leq -1 + r  + y) f_{Y_{n_{a,j}}}(y)dy,
\end{equation}
where $f_{Y_{n_{a,j}}}$ is the density function of $Y_{n_{a,j}}$. The first integral in \eqref{e:P(X=<-1+Y)=two_integrals} is bounded from above by $P(X_{n_{a,j}} \leq -\delta + r) P( Y_{n_{a,j}} \leq 1-\delta)$. Let
$$
{\boldsymbol \eta}_{1,n} = (\eta_{i,n})_{i \in i_{+}(n)}, \quad {\boldsymbol \eta}_{2,n} = (\eta_{i,n})_{i \in i_{-}(n)}.
$$
Then, by \eqref{e:Cov(W12,W34)} below (under \eqref{e:2max[h1,h2,2h12]=2(h1+h2)}),
\begin{equation}\label{e:bounds_norm_eta-n}
\max\{\|{\boldsymbol \eta}_{1,n}\|_2 ,\|{\boldsymbol \eta}_{2,n}\|_2 \}\leq \|{\boldsymbol \eta}_{n}\|_2 \sim C \frac{a^{2(h_{1}+h_{2}) - 4 h_{12}+1}}{n}, \quad \|{\boldsymbol \eta}_{2,n}\|_\infty \leq \|{\boldsymbol \eta}_{n}\|_\infty \leq \|{\boldsymbol \eta}_{n}\|_2,
\end{equation}
for a constant $C \in \bbR$. Moreover, since $- \delta + r < 0$, then
$$
P(X_{n_{a,j}} \leq -\delta + r) = P \Big( \sum_{i \in i_{+}(n)} \eta_{i,n} (Z^2_i-1) \leq - \delta + r \Big)
$$
$$
= P \Big( \sum_{i \in i_{+}(n)} \eta_{i,n} (Z^2_i-1) \leq - 2 \|{\boldsymbol \eta}_{1,n}\|_2 \sqrt{ \frac{(\delta - r)^2}{4\|{\boldsymbol \eta}_{1,n}\|^2_2}}\Big) \leq \exp\Big\{- \frac{(\delta-1/2)^2}{4 \|{\boldsymbol \eta}_{1,n}\|^2_2}\Big\}
$$
\begin{equation}\label{e:P(X=<-delta)_bound_LandM}
\leq \exp\Big\{ - C \frac{n}{a^{2(h_{1}+h_{2}) - 4 h_{12}+1}}  \Big\}.
\end{equation}
In \eqref{e:P(X=<-delta)_bound_LandM}, $C$ does not depend on $r$ and the last two inequalities are a consequence of \eqref{e:laurent_massart_bound2} and \eqref{e:bounds_norm_eta-n} below, respectively, where
\begin{equation}\label{e:2max[h1,h2,2h12]=2(h1+h2)}
2\max\{h_{q_1}+h_{q_2},2h_{q_1 q_2}\} = 2(h_{q_1}+h_{q_2}), \quad q_1,q_2 = 1,\hdots,m,
\end{equation}
stems from \eqref{e:hq1q2}. On the other hand, for any $0 < \xi < 1$ and large enough $n$ the second integral in \eqref{e:P(X=<-1+Y)=two_integrals} is bounded from above by
\begin{equation}\label{e:int_fYn_dy}
\int^{\infty}_{1-\delta} f_{Y_{n_{a,j}}}(y) dy = P(Y_{n_{a,j}} > 1 - \delta) = P \Big( \sum_{i \in i_{-}(n)} \eta_{i,n} (Z^2_i - 1) > 1 - \delta \Big).
\end{equation}
Suppose, without loss of generality, that
$$
\|{\boldsymbol \eta_{2,n}}\|_{2} > 0, \quad n \in \bbN
$$
(otherwise, $P ( \sum_{i \in i_{-}(n)} \eta_{i,n} (Z^2_i - 1) > 1 - \delta )=0$ for $n$ such that $\|{\boldsymbol \eta_{2,n}}\|_{2} = 0$). Therefore, by \eqref{e:bounds_norm_eta-n},
$$
\|{\boldsymbol \eta}_{2,n}\|_2 \Big(\frac{n}{a^{2(h_{1}+h_{2}) - 4 h_{12}+1}} \Big)^{\frac{1 - \xi}{2}}+ \|{\boldsymbol \eta}_{2,n}\|_\infty \Big(\frac{n}{a^{2(h_{1}+h_{2}) - 4 h_{12}+1}} \Big)^{1 - \xi} \rightarrow 0,
$$
for any $0 < \xi < 1$, as $n \rightarrow \infty$. Consequently, for large enough $n$, \eqref{e:int_fYn_dy} is bounded by
$$
P \Big( \sum_{i \in i_{-}(n)}\eta_{i,n} (Z^2_i - 1) > 2\|{\boldsymbol \eta}_{2,n}\|_2 \Big(\frac{n}{a^{2(h_{1}+h_{2}) - 4 h_{12}+1}} \Big)^{\frac{1 - \xi}{2}}
+ 2\|{\boldsymbol \eta}_{2,n}\|_\infty \Big(\frac{n}{a^{2(h_{1}+h_{2}) - 4 h_{12}+1}} \Big)^{1 - \xi}\Big)
$$
\begin{equation}\label{e:P(Y>1-delta)_bound_LandM}
\leq \exp\Big\{-\Big(\frac{n}{a^{2(h_{1}+h_{2}) - 4 h_{12}+1}} \Big)^{1 - \xi}\Big\},
\end{equation}
where the last inequality follows from \eqref{e:laurent_massart_bound1}. 
From \eqref{e:P(X=<-delta)_bound_LandM} and \eqref{e:P(Y>1-delta)_bound_LandM}, since $C$ does not depend on $r$, we obtain \eqref{e:P(Wbar=<bn0)=<exp(-nu(1+xi))}. $\Box$\\
\end{proof}

\subsection{Asymptotic covariances}

We will establish Theorem \ref{t:Cov(log,log)_cross} at the end of this section, after proving Lemmas \ref{l:third_order_cross_term=O(1/n^2)}--\ref{l:Cov(log,1)_Cov(1,1)}. The latter establish the asymptotic behavior of the first four moments of the $\bbR$-valued random variables $W^{(q_1 q_2)}_{n}(a(n)2^j)$, $W^{(q_3 q_4)}_{n}(a(n)2^j)$. So, consider the function
\begin{equation}\label{e:cov_truncated}
\log\hspace{0.5mm}|S^{(q_1 q_2)}_{n}(a(n)2^j)|\hspace{0.5mm}1_{\{|W_n^{(q_1 q_2)}(a(n)2^j)| > r\}},\qquad 1 \leq q_1 \leq q_2 \leq m, \quad 0 < r < \frac{1}{2},
\end{equation}
where the truncation works as a regularization of the log function around the origin. In the event $|W^{(q_1 q_2)}_{n}(a(n)2^j)| \leq r$, we interpret that
$$
\log \hspace{0.5mm} |S^{(q_1 q_2)}_{n}(a(n)2^j)|\hspace{0.5mm}  1_{\{|W^{(q_1q_2)}_{n}(a(n)2^j)| > r\}} = 0 \quad \textnormal{a.s.}
$$
Throughout this section, we will make use of the Isserlis theorem (e.g., \cite{vignat:2012}). For a zero mean, Gaussian random vector ${\mathbf Z} = (Z_1,\hdots,Z_m)^* \in \bbR^m$, the theorem states that
\begin{equation}\label{e:Isserlis}
\EE{Z_1 \hdots Z_{2k}} = \sum \prod \EE{Z_i Z_j}, \quad \EE{Z_1 \hdots Z_{2k+1}} = 0, \quad k = 1,\hdots, \lfloor m/2 \rfloor.
\end{equation}
The notation $\sum \prod$ stands for adding over all possible $k$-fold products of pairs $\EE{Z_i Z_j}$, where the indices partition the set $1,\hdots,2k$.

The following lemma shows that the high order centered cross moments of $W^{(12)}_n(a(n)2^j)$, $W^{(34)}_n(a(n)2^j)$ are negligible with respect to their low order counterparts.
\begin{lemma}\label{l:third_order_cross_term=O(1/n^2)}
Let $\kappa_{1},\kappa_{2} \in \bbN \cup \{0\}$, $\kappa_{1} + \kappa_{2} \geq 3$, and fix $0 < r < 1/2$. Then, as $n \rightarrow \infty$,
$$
\EE{ \Big( \frac{W^{(12)}_{n}(a(n)2^j)-1}{a(n)^{\delta_{12}}} \Big)^{\kappa_1} \Big( \frac{W^{(34)}_{n}(a(n)2^{j'})-1}{{a(n)^{\delta_{34}}}} \Big)^{\kappa_2} 1_{\{\min\{|W^{(12)}_{n}(a(n)2^{j'})|,|W^{(34)}_{n}(a(n)2^{j'})|\} > r \}}}
$$
\begin{equation}\label{e:E(Wnj-1)(Wnj'-1)^2_with_indicators}
= O\Big[ \Big(\frac{a(n)}{n}\Big)^2 \Big].
\end{equation}
\end{lemma}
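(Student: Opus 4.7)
The strategy is a Gaussian pairing expansion combined with a power-counting argument in $a(n)$.

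First, since $0 \leq 1_{\{\cdot\}} \leq 1$ and the complementary event has probability decaying faster than any polynomial in $n$ by Lemma~\ref{l:P(WN<e)->0} (using \eqref{e:assumption_a(n)_n}), while the integrand has at most polynomial moment growth by Gaussian hypercontractivity, I would drop the indicator: by Cauchy--Schwarz the restricted and unrestricted expectations differ by an error of smaller order than any polynomial in $n/a$. It therefore suffices to bound the unrestricted moment $\mathbb{E}[(W^{(12)}_n-1)^{\kappa_1}(W^{(34)}_n-1)^{\kappa_2}]/a^{\kappa_1\delta_{12}+\kappa_2\delta_{34}}$.

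Second, writing $W_n^{(q_1 q_2)}(a2^j)-1 = n_{a,j}^{-1}\sum_{k=1}^{n_{a,j}} V_k^{(q_1 q_2)}$ with the mean-zero (by stationarity) variable $V_k^{(q_1 q_2)} := d_{q_1}(a2^j,k)\,d_{q_2}(a2^j,k)/\varrho^{(q_1 q_2)}(a2^j) - 1$, I would apply Isserlis's theorem \eqref{e:Isserlis} to $\mathbb{E}[\prod_i V_{k_i}^{(\cdot\cdot)}]$. Viewing each $V_k^{(q_1 q_2)}$ as a vertex carrying two ``half-edges'' (its two Gaussian factors), Isserlis expresses the expectation as a sum over perfect matchings of all $2\kappa$ half-edges, where $\kappa := \kappa_1+\kappa_2$. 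The crucial algebraic simplification is that the centering $-1$ in each $V_k$ exactly cancels every matching that contains a within-vertex self-loop, so that only matchings whose induced graph on the $\kappa$ vertices is 2-regular survive, i.e., disjoint unions of $p$ cycles of lengths $\ell_1,\ldots,\ell_p\geq 2$ with $\sum_i \ell_i = \kappa$; hence $p \leq \lfloor \kappa/2\rfloor$.

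Third, for a fixed cycle decomposition I would bound its contribution by separating the scale factor from the index sum. By Lemma~\ref{l:cov_and_eigenstructure_wavelet_transf_and_var}, $(ii)$, every covariance satisfies $|\Xi_{q q'}^{jj',a}(\cdot)| = O(a^{2 h_{q q'}})$ and $\varrho^{(q_1 q_2)}(a2^j)\asymp a^{2 h_{q_1 q_2}}$. Summing covariance exponents over all edges and using the key inequality $h_{q q'}\leq (h_q+h_{q'})/2$ from \eqref{e:hq1q2},
\begin{equation*}
\sum_{\text{edges}} 2 h_{\text{ends of edge}}\;\leq\;\sum_{\text{half-edges}} h_{\text{type}}\;=\;\kappa_1(h_1+h_2)+\kappa_2(h_3+h_4),
\end{equation*}
while the denominator product of $\varrho$'s scales as $a^{2\kappa_1 h_{12}+2\kappa_2 h_{34}}$, so the overall scale ratio is bounded by $a^{\kappa_1\delta_{12}+\kappa_2\delta_{34}}$, exactly canceling the lemma's normalization. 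For the index sum, the decay $|\Phi^{jj'}_{q q'}(z)|=O(z^{-2})$ shown in \eqref{e:|int_eiyz_ha(y)dy|=<C/z2}, combined with Lemma~\ref{l:cov_and_eigenstructure_wavelet_transf_and_var}, $(ii)$, gives $\sum_z \bigl|\Xi^{jj',a}_{q q'}(az)/a^{2 h_{q q'}}\bigr|^\ell<\infty$ uniformly in $n$ for $\ell\geq 2$. Each cycle of length $\ell$ thus produces exactly one free summation index, so a $p$-cycle decomposition contributes $O(n_{a,j}^p)$ index combinations. The normalized moment is consequently $O(n_{a,j}^{p-\kappa})=O((a/n)^{\kappa-p})$, maximized when $p=\lfloor \kappa/2\rfloor$, which for $\kappa\geq 3$ yields $O((a/n)^{\lceil \kappa/2\rceil})=O((a/n)^2)$, as required.

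The main delicate step is the scale-factor inequality for cycles that mix $(12)$- and $(34)$-type vertices, where intermediate covariances involve the cross-parameters $h_{13}, h_{14}, h_{23}, h_{24}$. The uniform bound works precisely because \eqref{e:hq1q2} holds for \emph{every} pair of indices (i.e., positive semidefiniteness of the spectral density, cf.\ \eqref{e:CS}), so the same half-edge counting applies to arbitrary cycle shapes and the worst case remains $a^{\kappa_1\delta_{12}+\kappa_2\delta_{34}}$.
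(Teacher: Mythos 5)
Your proof is correct and rests on the same pillars as the paper's: drop the indicators via Cauchy--Schwarz and the exponential tail bound of Lemma \ref{l:P(WN<e)->0}, expand the centered product by Isserlis' theorem \eqref{e:Isserlis}, exploit the summability of the normalized covariances (the $O(z^{-2})$ decay together with Lemma \ref{l:cov_and_eigenstructure_wavelet_transf_and_var}, $(ii)$), and absorb the scale factors using $\delta_{\cdot\cdot}\geq 0$, i.e., \eqref{e:hq1q2}. The one genuine difference is that the paper only writes out the case $\kappa_1=1$, $\kappa_2=2$ explicitly (the eight surviving pairings in \eqref{e:E(X1X2-1)(X3X4-1)(X5X6-1)_after_Isserlis} are exactly the single $3$-cycles of your formalism) and asserts that the remaining cases are similar, whereas your cycle-decomposition and power-counting argument --- centering kills self-loops, each $\ell$-cycle contributes one free index and a uniformly summable product, the half-edge inequality controls the $a$-exponent for arbitrary mixed cycles --- actually delivers the general bound $O((a/n)^{\kappa-p})$ with $p\leq\lfloor\kappa/2\rfloor$ in one stroke. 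So your write-up is not a different method so much as a complete and more systematic version of the paper's sketch; the only point worth making explicit if you expand it is the uniform-in-$n$ summability $\sum_z\sup_n|\Xi^{jj',a}_{qq'}(az)|/a^{2h_{qq'}}<\infty$, which the paper obtains from \eqref{e:|Xi-Phi|=<1/r2} together with the fBm covariance decay (Proposition II.2 of the cited reference), exactly as you indicate.
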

\begin{proof}
We first show that
\begin{equation}\label{e:E(Wnj-1)(Wnj'-1)^2_no_indicators}
\EE{ \Big( \frac{W^{(12)}_{n}(a2^j)-1}{a^{\delta_{12}}} \Big)^{\kappa_1} \Big( \frac{W^{(34)}_{n}(a2^{j'})-1}{{a^{\delta_{34}}}} \Big)^{\kappa_2} }
= O\Big[ \Big(\frac{a}{n}\Big)^2\Big],
\end{equation}
$\kappa_{1},\kappa_{2} \in \bbN$, $\kappa_{1}  + \kappa_{2} \geq 3$. We will only establish \eqref{e:E(Wnj-1)(Wnj'-1)^2_no_indicators} for $\kappa_1 = 1$ and $\kappa_2 = 2$, since the remaining cases can be tackled by a similar argument.

The left-hand side of \eqref{e:E(Wnj-1)(Wnj'-1)^2_no_indicators} can be rewritten as
$$
\frac{1}{a^{\delta_{12}+2 \delta_{34}}}\frac{1}{n_{a,j} n^2_{a,j'}}{\Bbb E}\Big[\sum^{n_{a,j}}_{k=1}\sum^{n_{a,j'}}_{k_1=1}\sum^{n_{a,j'}}_{k_2=1}\Big(\frac{d_{1}(a2^j,k)d_{2}(a2^j,k)}{\varrho^{(12)}(a2^j) } - 1\Big)
$$
\begin{equation}\label{e:E(Wnj-1)(Wnj'-1)^2_no_indicators_expanded}
\Big(\frac{d_{3}(a2^{j'},k'_1)d_{4}(a2^{j'},k'_1)}{\varrho^{(34)}(a2^{j'})} - 1\Big)\Big(\frac{d_{3}(a2^{j'},k'_2)d_{4}(a2^{j'},k'_2)}{\varrho^{(34)}(a2^{j'})} - 1\Big)\Big],
\end{equation}
Starting from the assumption \eqref{e:mu_j>0}, for notational simplicity denote the generic terms under the summation sign in \eqref{e:E(Wnj-1)(Wnj'-1)^2_no_indicators_expanded} as
$X_1 = d_{1}(a2^j,k)/\sqrt{\varrho^{(12)}(a2^j) }$, $X_2 = d_{2}(a2^j,k)/\sqrt{\varrho^{(12)}(a2^j) }$, $X_3 = d_{3}(a2^{j'},k'_{1})/\sqrt{\varrho^{(34)}(a2^{j'})}$, $X_4 = d_{4}(a2^{j'},k'_{1})/\sqrt{\varrho^{(34)}(a2^{j'})}$, $X_5 = d_{3}(a2^{j'},k'_2)/\sqrt{\varrho^{(34)}(a2^{j'})}$, $X_6 = d_{4}(a2^{j'},k'_2)/\sqrt{\varrho^{(34)}(a2^{j'})}$. Then,
$$
\EE{(X_1X_2 - 1)(X_3X_4 - 1)(X_5X_6 - 1)}
$$
\begin{equation}\label{e:E(X1X2-1)(X3X4-1)(X5X6-1)}
= \EE{X_1 X_2X_3X_4X_5X_6} - \{\EE{X_1X_2X_3X_4} + \EE{X_1X_2X_5X_6} + \EE{X_3X_4X_5X_6}\}+2.
\end{equation}
By applying the Isserlis relation \eqref{e:Isserlis} to the six-fold and four-fold products in \eqref{e:E(X1X2-1)(X3X4-1)(X5X6-1)}, the latter expression becomes
$$
\EE{X_1X_3}\EE{X_2X_5}\EE{X_4 X_6}+ \EE{X_1X_3}\EE{X_2X_6}\EE{X_4 X_5}
$$
$$
+ \EE{X_1X_4}\EE{X_2X_5}\EE{X_3 X_6}+ \EE{X_1X_4}\EE{X_2X_6}\EE{X_3 X_5}
$$
$$
+ \EE{X_1X_5}\EE{X_2X_3}\EE{X_4 X_6} + \EE{X_1X_5}\EE{X_2X_4}\EE{X_3 X_6}
$$
\begin{equation}\label{e:E(X1X2-1)(X3X4-1)(X5X6-1)_after_Isserlis}
+ \EE{X_1X_6}\EE{X_2X_3}\EE{X_4 X_5} + \EE{X_1X_6}\EE{X_2X_4}\EE{X_3 X_5}.
\end{equation}
The asymptotic behavior of the summation of each term in the seven-fold sum \eqref{e:E(X1X2-1)(X3X4-1)(X5X6-1)_after_Isserlis} can be established in the same way, so we only study the first one. Up to a constant, the latter is asymptotically equivalent to
$$
\frac{a^{(2h_{13}+2h_{23}+2h_4)- (2h_{12}+4 h_{34})}}{a^{\delta_{12}+2 \delta_{34}}\hspace{1mm}n_{a,j}}\frac{1}{n^2_{a,j'}}\sum^{n_{a,j}}_{k=1}\sum^{n_{a,j'}}_{k'_1=1}\sum^{n_{a,j'}}_{k'_2=1}\frac{\EE{d_{1}(a2^j,k)d_{3}(a2^{j'},k'_1)}}{a^{2h_{13}}}
$$
$$
\hspace{1mm}\frac{\EE{d_{2}(a2^j,k)d_{3}(a2^{j'},k'_2)}}{a^{2h_{23}}}\hspace{1mm}\frac{\EE{d_{4}(a2^{j'},k'_1)d_{4}(a2^{j'},k'_2)}}{a^{2h_{4}}}
$$
$$
= \frac{a^{2(h_{13}+h_{23}+ h_4)}}{a^{h_1 + h_2 + 2(h_3 + h_4)} \hspace{1mm}n_{a,j}}\frac{1}{n^2_{a,j'}}\sum^{n_{a,j'}}_{k'_1=1}\sum^{n_{a,j'}}_{k'_2=1} \hspace{1mm}\frac{\EE{d_{4}(a2^{j'},k'_1)d_{4}(a2^{j'},k'_2)}}{a^{2h_4}}
$$
\begin{equation}\label{e:triple_sum}
\cdot\Big\{\sum^{n_{a,j}}_{k=1} \frac{\EE{d_{1}(a2^j,k)d_{3}(a2^{j'},k'_1)}}{a^{2 h_{13}}}\hspace{1mm}\frac{\EE{d_{2}(a2^j,k)d_{3}(a2^{j'},k'_2)}}{a^{2h_{23}}}\Big\}.
\end{equation}
However, the summation in $k$ in expression \eqref{e:triple_sum} is bounded by
$$
\Big|\sum^{n_{a,j}}_{k=1} \frac{\EE{d_{1}(a2^j,k)d_{3}(a2^{j'},k'_1)}}{a^{2 h_{13}}}\hspace{1mm}\frac{\EE{d_{2}(a2^j,k)d_{3}(a2^{j'},k'_2)}}{a^{2h_{23}}}\Big|
$$
$$
\leq \sum^{n_{a,j}}_{k=1} \Big(\Big|\frac{\EE{d_{1}(a2^j,k)d_{3}(a2^{j'},k'_1)} - \Phi^{jj'}_{13}(a(2^j k-2^{j'}k'_1))}{a^{2 h_{13}}}\Big| + \Big|\frac{\Phi^{jj'}_{13}(a(2^j k-2^{j'}k'_1))}{a^{2 h_{13}}}\Big| \Big)
$$
$$
\Big( \Big|\frac{\EE{d_{2}(a2^j,k)d_{3}(a2^{j'},k'_2)} - \Phi^{jj'}_{23}(a(2^j k-2^{j'}k'_2))}{a^{2h_{23}}}\Big|  + \Big|\frac{\Phi^{jj'}_{23}(a(2^j k-2^{j'}k'_2))}{a^{2h_{23}}}\Big|\Big)
$$
$$
\leq \sum^{n_{a,j}}_{k=1} \Big\{\Big|\frac{\EE{d_{1}(a2^j,k)d_{3}(a2^{j'},k'_1)} - \Phi^{jj'}_{13}(a(2^j k-2^{j'}k'_1))}{a^{2 h_{13}}}\Big|
$$
$$
\cdot \Big|\frac{\EE{d_{2}(a2^j,k)d_{3}(a2^{j'},k'_2)} - \Phi^{jj'}_{23}(a(2^j k-2^{j'}k'_2))}{a^{2h_{23}}}\Big|
$$
$$
+ \Big|\frac{\Phi^{jj'}_{13}(a(2^j k-2^{j'}k'_1))}{a^{2 h_{13}}}\Big| \Big|\frac{\EE{d_{2}(a2^j,k)d_{3}(a2^{j'},k'_2)} - \Phi^{jj'}_{23}(a(2^j k-2^{j'}k'_2))}{a^{2h_{23}}}\Big|
$$
$$
+ \Big|\frac{\EE{d_{1}(a2^j,k)d_{3}(a2^{j'},k'_1)} - \Phi^{jj'}_{13}(a(2^j k-2^{j'}k'_1))}{a^{2 h_{13}}}\Big| \Big|\frac{\Phi^{jj'}_{23}(a(2^j k-2^{j'}k'_2))}{a^{2h_{23}}}\Big|
$$
\begin{equation}\label{e:bound_doublesum_product_wavelet_moments}
+ \Big|\frac{\Phi^{jj'}_{13}(a(2^j k-2^{j'}k'_1))}{a^{2 h_{13}}}\Big| \Big|\frac{\Phi^{jj'}_{23}(a(2^j k-2^{j'}k'_2))}{a^{2h_{23}}}\Big|\Big\} \leq C,
\end{equation}
where $C$ does not depend on $k$. To justify the last inequality, we only look at the second term in \eqref{e:bound_doublesum_product_wavelet_moments}, since the remaining terms can be analyzed in a similar way. It suffices to proceed as in \cite{abry:didier:2016:supplementary}, in particular around expression (B.31) in the latter reference. Indeed, starting from \eqref{e:W2}, suppose without loss of generality that
$$
\textnormal{supp}(\psi) = [0,1].
$$
For $0 < \varepsilon < 1/2$, decompose
$$
\sum^{n_{a,j}}_{k=1} \Big|\Phi^{jj'}_{13}(2^j k-2^{j'}k'_1)\Big| \Big|\frac{\EE{d_{2}(a2^j,k)d_{3}(a2^{j'},k'_2)} - \Phi^{jj'}_{23}(a(2^j k-2^{j'}k'_2))}{a^{2h_{23}}}\Big|
$$
$$
= \sum^{n_{a,j}}_{k=1} \Big(1_{\Big\{\frac{\max\{2^j,2^{j'}\}}{|2^jk-2^{j'}k'_1|} > \varepsilon\Big\}} + 1_{\Big\{\frac{\max\{2^j,2^{j'}\}}{|2^jk-2^{j'}k'_1|} \leq \varepsilon \Big\}}\Big)
$$
\begin{equation}\label{e:sum_small_2jk-2j'k'_vs_large_2jk-2j'k'}
\Big|\Phi^{jj'}_{13}(2^j k-2^{j'}k'_1)\Big| \Big|\frac{\EE{d_{2}(a2^j,k)d_{3}(a2^{j'},k'_2)} - \Phi^{jj'}_{23}(a(2^j k-2^{j'}k'_2))}{a^{2h_{23}}}\Big|.
\end{equation}
The first sum term in \eqref{e:sum_small_2jk-2j'k'_vs_large_2jk-2j'k'} only contains a finite number of terms, where such number does not depend on $k$ or $k'_1$. Moreover,
$$
\max \Big\{|\Phi^{jj'}_{13}(2^j k-2^{j'}k'_1)|,\Big|\frac{\EE{d_{2}(a2^j,k)d_{3}(a2^{j'},k'_2)} - \Phi^{jj'}_{23}(a(2^j k-2^{j'}k'_2))}{a^{2h_{23}}}\Big|\Big\} \leq C,
$$
where $C$ does not depend $k$, $k'_1$ or $k''_2$. The latter statement follows from the fact that $\Phi^{jj'}_{13} $ is, up to a change of sign, the wavelet variance of a fBm and from \eqref{e:|Xi-Phi|=<1/r2}.
Moreover, by Proposition II.2 in \cite{bardet:2002} and again by \eqref{e:|Xi-Phi|=<1/r2}, the second sum term in \eqref{e:sum_small_2jk-2j'k'_vs_large_2jk-2j'k'} is bounded by $C \sum_{z \in \bbZ \backslash\{0\}} z^{-4}$. This shows that \eqref{e:sum_small_2jk-2j'k'_vs_large_2jk-2j'k'} is bounded by a constant not depending on $k$, $k'_1$ or $k'_2$. Hence, \eqref{e:bound_doublesum_product_wavelet_moments} holds.

Consequently, up to a constant the absolute value of \eqref{e:triple_sum} is bounded from above by
$$
C' \frac{a^{2(h_{13}+h_{23}+ h_4)+2}}{a^{h_1 + h_2 + 2(h_3 + h_4)}\hspace{1mm}n^2}\hspace{1mm}\frac{1}{n_{a,j'}}\sum^{n_{a,j'}}_{k'_1=1} \sum^{n_{a,j'}}_{k'_2=1} \Big|\frac{\EE{d_{4}(a2^{j'},k'_1) d_{4}(a2^{j'},k'_2)}}{a^{2h_4}}\Big|
\leq C'' \Big(\frac{a}{n}\Big)^2,
$$
where we used the fact that $\frac{1}{a^{\delta_{13}+\delta_{23}}} \leq 1$. This establishes \eqref{e:E(Wnj-1)(Wnj'-1)^2_no_indicators}.

So, rewrite
$$
\EE{\Big( \frac{W^{(12)}_{n}(a2^j)-1}{a^{\delta_{12}}} \Big)\Big( \frac{W^{(34)}_{n}(a2^{j'})-1}{a^{\delta_{34}}} \Big)^2}
$$
$$
- \EE{\Big( \frac{W^{(12)}_{n}(a2^j)-1}{a^{\delta_{12}}} \Big)1_{\{W^{(12)}_{n}(a2^j) > r\}}\Big( \frac{W^{(34)}_{n}(a2^{j'})-1}{a^{\delta_{34}}} \Big)^2 1_{\{W^{(34)}_{n}(a2^{j'}) > r\}}}
$$
$$
= {\Bbb E}\Big[\Big( \frac{W^{(12)}_{n}(a2^j)-1}{a^{\delta_{12}}} \Big)\Big(\frac{W^{(34)}_{n}(a2^{j'})-1}{a^{\delta_{34}}}  \Big)^2 \Big\{1_{\{W^{(12)}_{n}(a2^j) \leq r\}}1_{\{W^{(34)}_{n}(a2^{j'}) > r\}}
$$
\begin{equation}\label{e:difference_E(Wnj-1)(Wnj'-1)2_with_without_truncation}
+ 1_{\{W^{(12)}_{n}(a2^j) > r\}}1_{\{W^{(34)}_{n}(a2^{j'}) \leq r\}} + 1_{\{W^{(12)}_{n}(a2^j) \leq r\}} 1_{\{W^{(34)}_{n}(a2^{j'}) \leq r\}}\Big\}\Big].
\end{equation}
The asymptotic behavior of every term on the right-hand side of \eqref{e:difference_E(Wnj-1)(Wnj'-1)2_with_without_truncation} can be established in the same way, so we only look at the first one. For any $0 < \xi < 1$, the Cauchy-Schwarz inequality, Lemma \ref{l:P(WN<e)->0} and expression \eqref{e:E(Wnj-1)(Wnj'-1)^2_no_indicators} yield
$$
\Big|\EE{\Big(\frac{W^{(12)}_{n}(a2^j)-1}{a^{\delta_{12}}} \Big)1_{\{W^{(12)}_{n}(a2^j) \leq r\}}\Big(\frac{W^{(34)}_{n}(a2^{j'})-1}{a^{\delta_{34}}}  \Big)^2 1_{\{W^{(34)}_{n}(a2^{j'}) > r\}}} \Big|
$$
$$
\leq \sqrt{\EE{\Big(\frac{W^{(12)}_{n}(a2^j)-1}{a^{\delta_{12}}} \Big)^2 \Big(\frac{W^{(34)}_{n}(a2^{j'})-1}{a^{\delta_{34}}}  \Big)^4}} \sqrt{P(W^{(12)}_{n}(a2^j) \leq r)}
$$
$$
\leq C \Big(\frac{a}{n}\Big) \exp\Big\{-\frac{1}{2}\Big( \frac{n}{a^{2(h_{3}+h_{4}) - 4 h_{34}+1}} \Big)^{1 -\xi}\Big\},
$$
where $C$ does not depend on $r$. Moreover, under \eqref{e:assumption_a(n)_n},
$$
\exp\Big\{-\frac{1}{2}\Big( \frac{n}{a^{2(h_{3}+h_{4}) - 4 h_{34}+1}} \Big)^{1 -\xi}\Big\} = o\Big(\frac{a}{n}\Big).
$$
Therefore, \eqref{e:E(Wnj-1)(Wnj'-1)^2_with_indicators} holds. $\Box$\\
\end{proof}

The following lemma expresses, up to a residual term, the cross-covariance (first cross-moment) between sample wavelet variances in terms of the functions \eqref{e:Phi(j,j')q1q2(z)}.
\begin{lemma}\label{l:E(Wnj-1)(Wnj'-1)}
Let $\Phi^{jj'}_{\cdot \cdot}(z)$ and $n_*$ be as in \eqref{e:Phi(j,j')q1q2(z)} and \eqref{e:n*}, respectively. For $0 < r < 1/2$,
$$
\EE{ (W^{(12)}_{n}(a(n)2^j) - 1) 1_{\{|W^{(12)}_{n}(a(n)2^j)| > r \}} (W^{(34)}_{n}(a(n)2^{j'}) - 1) 1_{\{|W^{(34)}_{n}(a(n)2^{j'})| > r \}} }
$$
$$
= \hspace{0.5mm} \frac{2^{-(j+j')}}{\Phi^{jj}_{12}(0) \Phi^{j'j'}_{34}(0)\{1 + O(a(n)^{-\varpi_0})\}^2 }
$$
$$
\Big\{\frac{a(n)^{2 (h_{13}+h_{24})- 2 (h_{12}+h_{34})}}{n_*}\Big[ \frac{1}{n_*}\sum^{2^{j'}n_*}_{k = 1}\sum^{2^{j}n_*}_{k' = 1}\Phi^{jj'}_{1 3}(2^{j}k-2^{j'}k')\Phi^{jj'}_{24}(2^{j}k-2^{j'}k')\Big]
$$
$$
+ \frac{a(n)^{2 (h_{14}+h_{23})-2 (h_{12}+h_{34})}}{n_*} \Big[\frac{1}{n_*}\sum^{2^{j'}n_*}_{k = 1}\sum^{2^{j}n_*}_{k' = 1}\Phi^{jj'}_{1 4}(2^{j}k-2^{j'}k')\Phi^{jj'}_{23}(2^{j}k-2^{j'}k')\Big]
$$\begin{equation}\label{e:E(Wnj-1)(Wnj'-1)^2}
+ o\Big( \frac{a^{2\max\{h_{13}+h_{24},h_{12}+h_{23}\} - 2 (h_{12}+h_{34})}}{n_*}\Big)\Big\},
\end{equation}
as $n \rightarrow \infty$.
\end{lemma}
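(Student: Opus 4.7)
The plan is to show \eqref{e:E(Wnj-1)(Wnj'-1)^2} in two conceptually independent stages: first, that the truncation by the indicators $1_{\{|W_n^{(\cdot \cdot)}(\cdot)| > r\}}$ can be removed at a cost that is smaller than every term on the right-hand side; second, that the untruncated cross moment $\EE{(W_n^{(12)}(a2^j)-1)(W_n^{(34)}(a2^{j'})-1)}$ equals the claimed asymptotic expression. For the truncation removal, I would write the difference between the truncated and untruncated second cross moment as three terms (one indicator failing, then the other, then both) and bound each by the Cauchy--Schwarz inequality: the product-moment factor is $O(a(n)/n)$ (this is exactly \eqref{e:E(Wnj-1)(Wnj'-1)^2_no_indicators} in Lemma~\ref{l:third_order_cross_term=O(1/n^2)} with $\kappa_1=\kappa_2=1$, which is proved in the same way as the higher-order case treated there), and the probability factor decays like $\exp\{-(n/a(n)^{2(h_\cdot+h_\cdot)-4h_{\cdot \cdot}+1})^{1-\xi}\}$ by Lemma~\ref{l:P(WN<e)->0}. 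Under \eqref{e:assumption_a(n)_n}, this exponential is $o$ of any power of $a(n)/n$, and in particular $o$ of the terms on the right-hand side of \eqref{e:E(Wnj-1)(Wnj'-1)^2}.

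For the untruncated moment, I would expand using the definitions \eqref{e:Sq1q2} and \eqref{e:Wn(j)}:
\[
\EE{(W^{(12)}_n(a2^j)-1)(W^{(34)}_n(a2^{j'})-1)} = \frac{1}{n_{a,j}n_{a,j'}\,\varrho^{(12)}(a2^j)\varrho^{(34)}(a2^{j'})}\sum_{k=1}^{n_{a,j}}\sum_{k'=1}^{n_{a,j'}}\mathcal{C}_{k,k'},
\]
where $\mathcal{C}_{k,k'}=\Cov[d_1(a2^j,k)d_2(a2^j,k),d_3(a2^{j'},k')d_4(a2^{j'},k')]$. Since the wavelet coefficients are jointly Gaussian and centered, Isserlis' formula \eqref{e:Isserlis} gives
\[
\mathcal{C}_{k,k'}=\Xi^{jj',a}_{13}\bigl(a(2^jk-2^{j'}k')\bigr)\Xi^{jj',a}_{24}\bigl(a(2^jk-2^{j'}k')\bigr)+\Xi^{jj',a}_{14}\bigl(a(2^jk-2^{j'}k')\bigr)\Xi^{jj',a}_{23}\bigl(a(2^jk-2^{j'}k')\bigr).
\]

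Next I would apply Lemma~\ref{l:cov_and_eigenstructure_wavelet_transf_and_var}$(ii)$ to each factor, writing $\Xi^{jj',a}_{q q'}(az)=a^{2h_{q q'}}\{\Phi^{jj'}_{q q'}(z)+O(a^{-\varpi_0})\}$ for $z=2^jk-2^{j'}k'$, and similarly $\varrho^{(q_1 q_2)}(a2^j)=a^{2h_{q_1 q_2}}\Phi^{jj}_{q_1 q_2}(0)\{1+O(a^{-\varpi_0})\}$. Reindexing the sum by writing $k\mapsto k/1$, $k'\mapsto k'/1$ and noting that $n_{a,j}n_{a,j'}=n_\ast^2/2^{j+j'}$ with $n_{a,j}=2^{j'}n_\ast$, $n_{a,j'}=2^jn_\ast$ (see \eqref{e:n*}), the scaling factors combine as
\[
\frac{a^{2h_{13}+2h_{24}}}{a^{2(h_{12}+h_{34})}}\cdot\frac{1}{n_{a,j}n_{a,j'}}=\frac{2^{-(j+j')}\,a^{2(h_{13}+h_{24})-2(h_{12}+h_{34})}}{n_\ast^2},
\]
and analogously for the $(14,23)$ product, which yields the two main terms of \eqref{e:E(Wnj-1)(Wnj'-1)^2} after factoring out $(1/n_\ast)$ to turn one of the two summations into the double Riemann sum appearing in the statement.

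The last step is the control of the residual $o(a^{2\max\{h_{13}+h_{24},h_{14}+h_{23}\}-2(h_{12}+h_{34})}/n_\ast)$. This comes from two sources: (a) the $O(a^{-\varpi_0})$ perturbations that appear when replacing each $\Xi^{jj',a}_{qq'}$ by $a^{2h_{qq'}}\Phi^{jj'}_{qq'}$, producing cross terms that involve at least one factor $O(a^{-\varpi_0})$ multiplying bounded-in-$k,k'$ factors, so that after $(1/n_\ast)\sum_{k,k'}$ one obtains $O(a^{-\varpi_0})$ times the leading sum, which is $o(1)$ times the main terms under \eqref{e:assumption_a(n)_n}; and (b) the boundedness of the Riemann sums of $\Phi^{jj'}_{q q'}\Phi^{jj'}_{q'' q'''}$ uniformly in $n$, which is a consequence of the bound $|\Phi^{jj'}_{qq'}(z)|\le C/(1+z^2)$ obtained in \eqref{e:|int_eiyz_ha(y)dy|=<C/z2}. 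The main obstacle I anticipate is a bookkeeping one: tracking all the $O(a^{-\varpi_0})$ corrections, pushing them past the $(1/n_\ast)\sum\sum$ uniformly in $k,k'$, and ensuring that the two rescalings $a^{2(h_{13}+h_{24})-2(h_{12}+h_{34})}$ and $a^{2(h_{14}+h_{23})-2(h_{12}+h_{34})}$ are handled in parallel so that the overall residual is indeed bounded by the maximum of the two (as in the statement), rather than by their sum with an unwanted exponent.
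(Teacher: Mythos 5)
Your proposal follows essentially the same route as the paper's proof: Isserlis' formula to reduce the cross-covariance to the two products of second moments $\Xi^{jj',a}_{13}\Xi^{jj',a}_{24}+\Xi^{jj',a}_{14}\Xi^{jj',a}_{23}$, Lemma \ref{l:cov_and_eigenstructure_wavelet_transf_and_var}$(ii)$ to replace each $\Xi^{jj',a}_{qq'}$ by $a^{2h_{qq'}}\Phi^{jj'}_{qq'}$ up to $O(a^{-\varpi_0})$ with the residual absorbed via the boundedness of the Riemann sums, and Cauchy--Schwarz combined with the exponential bound of Lemma \ref{l:P(WN<e)->0} to remove the indicators (the paper does this step last rather than first, which is immaterial). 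The only slip is your citation of \eqref{e:E(Wnj-1)(Wnj'-1)^2_no_indicators} ``with $\kappa_1=\kappa_2=1$,'' a case excluded by the hypothesis $\kappa_1+\kappa_2\ge 3$ of Lemma \ref{l:third_order_cross_term=O(1/n^2)}; the paper instead squares inside the Cauchy--Schwarz inequality and invokes the legitimate $\kappa_1=\kappa_2=2$ case, which yields the same $O(a(n)/n)$ factor up to the harmless powers $a(n)^{\delta_{12}+\delta_{34}}$, so your argument is correct after this minor repair.
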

\begin{proof}
As in the proof of Lemma \ref{l:third_order_cross_term=O(1/n^2)}, we first drop the indicator functions on the left-hand side of \eqref{e:E(Wnj-1)(Wnj'-1)^2} and investigate the limit. We will show that
$$
\Cov\Big[W^{(12)}_{n}(a2^j), W^{(34)}_{n}(a2^{j'}) \Big]
= \frac{2^{-(j+j')}}{[\Phi^{jj}_{12}(0)\Phi^{j'j'}_{34}(0) (1+ O(a^{-\varpi_0}))^2 ]}
$$
$$
\cdot \Big\{\frac{a^{2(h_{13}+h_{24}) - 2(h_{12}+h_{34})}}{n_*}  \frac{1}{n_*} \sum^{2^{j'}n_*}_{k = 1}\sum^{2^{j}n_*}_{k' = 1}\Phi^{jj'}_{13}(2^{j}k-2^{j'}k')\Phi^{jj'}_{24}(2^{j}k-2^{j'}k')
$$
$$
+ \frac{a^{2(h_{14}+h_{23}) - 2(h_{12}+h_{34})}}{n_*} \frac{1}{n_*}\sum^{2^{j'}n_*}_{k = 1}\sum^{2^{j}n_*}_{k' = 1}\Phi^{jj'}_{14}(2^{j}k-2^{j'}k')\Phi^{jj'}_{23}(2^{j}k- 2^{j'}k')
$$
\begin{equation}\label{e:Cov(W12,W34)}
+ o\Big(\frac{a^{2\max\{h_{13}+h_{24},h_{14}+h_{23}\} - 2(h_{12}+h_{34})}}{n_*}\Big)\Big\}.
\end{equation}
In fact, the left-hand side of \eqref{e:first_order_cross_expression_up_to_premultiplication_by_growth_rate} can be written as
$$
\EE{(W^{(12)}_{n}(a2^j) - 1) (W^{(34)}_{n}(a2^{j'}) - 1) }
$$
\begin{equation}\label{e:E(product_cross_sums)_reexpressed}
= \frac{1}{n_{a,j} n_{a,j'}} \sum^{n_{a,j}}_{k=1} \sum^{n_{a,j'}}_{k'=1} \Big\{ \EE{ \frac{d_{1}(a2^j,k)d_{2}(a2^j,k)}{\varrho^{(12)}(a2^j)}\frac{d_{3}(a2^{j'},k')d_{4}(a2^{j'},k')}{\varrho^{(34)}(a2^{j'})}}
- 1\Big\}.
\end{equation}
By the Isserlis relation \eqref{e:Isserlis}, the first term in the argument of the sum \eqref{e:E(product_cross_sums)_reexpressed} can be reexpressed as
$$
\frac{\EE{ d_{1}(a2^j,k)d_{2}(a2^j,k) d_{3}(a2^{j'},k')d_{4}(a2^{j'},k')}}{\varrho^{(12)}(a2^j)\varrho^{(34)}(a2^{j'})}
$$
$$
= \Big\{1 + \frac{\EE{d_{1}(a2^j,k)d_{3}(a2^{j'},k') }\hspace{1mm}\EE{d_{2}(a2^j,k)d_{4}(a2^{j'},k')}}{\varrho^{(12)}(a2^j)\varrho^{(34)}(a2^{j'})} $$
\begin{equation}\label{e:first_term_reexpressed}
+ \frac{\EE{d_{1}(a2^j,k)d_{4}(a2^{j'},k')} \hspace{1mm}\EE{d_{2}(a2^j,k)d_{3}(a2^{j'},k')}}{\varrho^{(12)}(a2^j)\varrho^{(34)}(a2^{j'})} \Big\}.
\end{equation}
By Lemma \ref{l:cov_and_eigenstructure_wavelet_transf_and_var}, $(ii)$, and \eqref{e:first_term_reexpressed}, we can rewrite \eqref{e:E(product_cross_sums)_reexpressed} as
$$
\frac{1}{n_{a,j} n_{a,j'}}\sum^{n_{a,j}}_{k=1} \sum^{n_{a,j'}}_{k'=1} \Big\{\frac{\EE{ d_{1}(a2^j,k)d_{3}(a2^{j'},k') }\hspace{1mm}\EE{d_{2}(a2^j,k)d_{4}(a2^{j'},k')}}{\varrho^{(12)}(a2^j) \varrho^{(34)}(a2^{j'}) }
$$
$$
+ \frac{\EE{d_{1}(a2^j,k)d_{4}(a2^{j'},k')} \hspace{1mm}\EE{d_{2}(a2^j,k)d_{3}(a2^{j'},k')}}{\varrho^{(12)}(a2^j) \varrho^{(34)}(a2^{j'})} \Big\}
$$
$$
= \frac{2^{-(j+j')}}{\Phi^{jj}_{12}(0)\Phi^{j'j'}_{34}(0) (1+ O(a^{-\varpi_0}))^2}
$$
$$
\Big\{\frac{a^{2(h_{13}+h_{24}) - 2(h_{12}+h_{34})}}{n_*} \Big[\frac{1}{n_*}\sum^{2^{j'}n_*}_{k=1}\sum^{2^{j}n_*}_{k'=1}\frac{\EE{d_{1}(a2^j,k)d_{3}(a2^{j'},k')}}{a^{2h_{13}}}\frac{\EE{d_{2}(a2^j,k)d_{4}(a2^{j'},k')}}{a^{2h_{24}}} \Big]
$$
\begin{equation}\label{e:first_order_cross_expression}
+\frac{a^{2(h_{14}+h_{23}) - 2(h_{12}+h_{34})}}{n_*} \Big[\frac{1}{n_*}\sum^{2^{j'}n_*}_{k=1}\sum^{2^{j}n_*}_{k'=1}\frac{\EE{d_{1}(a2^j,k)d_{4}(a2^{j'},k')}}{a^{2h_{14}}}\frac{\EE{d_{2}(a2^{j},k)d_{3}(a2^{j'},k')}}{a^{2h_{23}}}\Big]\Big\}.
\end{equation}
By adding and subtracting the counterparts $\Phi^{jj'}_{\cdot \cdot}(2^jk-2^{j'}k')$ for each term, up to the factor $2^{-(j+j')}/\{\Phi^{jj}_{12}(0)\Phi^{j'j'}_{34}(0) (1+ O(a^{-\varpi_0}))^2 \}$ the expression \eqref{e:first_order_cross_expression} can be written as
$$
\frac{a^{2(h_{13}+h_{24}) - 2(h_{12}+h_{34})}}{n_*} \Big[\frac{1}{n_*}\sum^{2^{j'}n_*}_{k=1}\sum^{2^{j}n_*}_{k'=1}\Big(\frac{\EE{d_{1}(a2^j,k)d_{3}(a2^{j'},k')} }{a^{2h_{13}}}- \Phi^{jj'}_{13}(2^jk-2^{j'}k')\Big)
$$
$$
\Big(\frac{\EE{d_{2}(a2^j,k)d_{4}(a2^{j'},k')} }{a^{2h_{24}}}- \Phi^{jj'}_{24}(2^jk-2^{j'}k')\Big)
$$
$$
+ \Phi^{jj'}_{13}(2^jk-2^{j'}k') \Big(\frac{\EE{d_{2}(a2^j,k)d_{4}(a2^{j'},k')}}{a^{2h_{24}}} - \Phi^{jj'}_{24}(2^jk-2^{j'}k')\Big)
$$
$$
+ \Phi^{jj'}_{24}(2^jk-2^{j'}k') \Big(\frac{\EE{d_{1}(a2^j,k)d_{3}(a2^{j'},k')}}{a^{2h_{13}}} - \Phi^{jj'}_{13}(2^jk-2^{j'}k')\Big)
$$
$$
+ \Phi^{jj'}_{13}(2^jk-2^{j'}k') \Phi^{jj'}_{24}(2^jk-2^{j'}k')\Big\}\Big]
$$
$$
+ \frac{a^{2(h_{14}+h_{23}) - 2(h_{12}+h_{34})}}{n_*} \Big[\frac{1}{n_*}\sum^{2^{j'}n_*}_{k=1}\sum^{2^{j}n_*}_{k'=1}\Big\{\Big( \frac{\EE{d_{1}(a2^j,k)d_{4}(a2^{j'},k')}}{a^{2h_{14}}} - \Phi^{jj'}_{14}(2^jk-2^{j'}k')\Big)
$$
$$
\cdot \Big( \frac{\EE{d_{2}(a2^{j},k)d_{3}(a2^{j'},k')}}{a^{2h_{23}}} - \Phi^{j'j}_{23}(2^{j}k-2^{j'}k')\Big)
$$
$$
+\Phi^{jj'}_{14}(2^{j}k-2^{j'}k') \Big( \frac{\EE{d_{1}(a2^j,k)d_{4}(a2^{j'},k')}}{a^{2h_{14}}} - \Phi^{jj'}_{23}(2^jk-2^{j'}k')\Big)
$$
$$
+\Phi^{jj'}_{23}(2^{j}k-2^{j'}k') \Big( \frac{\EE{d_{2}(a2^{j},k)d_{3}(a2^{j'},k') }}{a^{2h_{23}}} - \Phi^{jj'}_{14}(2^{j}k-2^{j'}k')\Big)
$$
$$
+ \Phi^{jj'}_{14}(2^{j}k-2^{j'}k') \Phi^{jj'}_{23}(2^{j}k- 2^{j'}k')\Big\} \Big]
$$
$$
= \frac{a^{2(h_{13}+h_{24}) - 2(h_{12}+h_{34})}}{n_*}  \frac{1}{n_*} \sum^{2^{j'}n_*}_{k = 1}\sum^{2^{j}n_*}_{k' = 1}\Phi^{jj'}_{13}(2^{j}k-2^{j'}k')\Phi^{jj'}_{24}(2^{j}k-2^{j'}k')
$$
$$
+ \frac{a^{2(h_{14}+h_{23}) - 2(h_{12}+h_{34})}}{n_*} \frac{1}{n_*}\sum^{2^{j'}n_*}_{k = 1}\sum^{2^{j}n_*}_{k' = 1}\Phi^{jj'}_{14}(2^{j}k-2^{j'}k')\Phi^{jj'}_{23}(2^{j}k- 2^{j'}k')
$$
\begin{equation}\label{e:first_order_cross_expression_up_to_premultiplication_by_growth_rate}
+ o\Big(\frac{a^{2\max\{h_{13}+h_{24},h_{14}+h_{23}\} - 2(h_{12}+h_{34})}}{n_*}\Big).
\end{equation}
It remains to justify the order of the error term in \eqref{e:first_order_cross_expression_up_to_premultiplication_by_growth_rate}. So, by adapting the proof of \eqref{e:summation_deviation_HfBm_idealHfBm}, we obtain
$$
\frac{a^{2(h_{13}+h_{24}) - 2(h_{12}+h_{34})}}{n_*} \frac{1}{n_*}\Big|\sum^{2^j n_*}_{k=1}\sum^{2^{j'} n_*}_{k'=1} \Phi^{jj'}_{13}(2^jk-2^{j'}k') \Big(\frac{\EE{d_{2}(a2^j,k)d_{4}(a2^{j'},k')}}{a^{2h_{24}}} - \Phi^{jj'}_{24}(2^jk-2^{j'}k')\Big)\Big|
$$
$$
= \frac{a^{2(h_{13}+h_{24}) - 2(h_{12}+h_{34})}}{n_*} \hspace{1mm}o(1) = o\Big(\frac{a^{2(h_{13}+h_{24}) - 2(h_{12}+h_{34})}}{n_*} \Big).
$$
The same bound holds for
$$
\frac{a^{2(h_{13}+h_{24}) - 2(h_{12}+h_{34})}}{n_*} \frac{1}{n_*}\Big|\sum^{2^j n_*}_{k=1}\sum^{2^{j'} n_*}_{k'=1} \Phi^{jj'}_{24}(2^jk-2^{j'}k') \Big(\frac{\EE{d_{1}(a2^j,k)d_{3}(a2^{j'},k')}}{a^{2h_{13}}} - \Phi^{jj'}_{13}(2^jk-2^{j'}k')\Big)\Big|.
$$
and
$$
\frac{a^{2(h_{13}+h_{24}) - 2(h_{12}+h_{34})}}{n_*} \frac{1}{n_*}\Big|\sum^{2^{j'}n_*}_{k=1}\sum^{2^{j}n_*}_{k'=1}\Big(\frac{\EE{d_{1}(a2^j,k)d_{3}(a2^{j'},k')}}{a^{2h_{13}}}- \Phi^{jj'}_{13}(2^jk-2^{j'}k')\Big)
$$
$$
\Big(\frac{\EE{d_{2}(a2^j,k)d_{4}(a2^{j'},k')}}{a^{2h_{24}}}- \Phi^{jj'}_{24}(2^jk-2^{j'}k')\Big)\Big|
$$
By extending this analysis to the remaining terms of \eqref{e:first_order_cross_expression_up_to_premultiplication_by_growth_rate}, we obtain analogous bounds and the error term $o\Big(\frac{a^{2\max\{h_{13}+h_{24},h_{14}+h_{23}\} - 2(h_{12}+h_{34})}}{n_*}\Big)$, as claimed.

In view of \eqref{e:first_order_cross_expression} and \eqref{e:first_order_cross_expression_up_to_premultiplication_by_growth_rate}, it suffices to show that the indicator functions on the left-hand side of \eqref{e:E(Wnj-1)(Wnj'-1)^2} do not affect the approximation order. In fact,
$$
\EE{ (W^{(12)}_{n}(a2^j) - 1) (W^{(34)}_{n}(a2^{j'}) - 1) }
 $$
 $$
 - \EE{ (W^{(12)}_{n}(a2^j) - 1) 1_{\{W^{(12)}_{n}(a2^j) > r \}} (W^{(34)}_{n}(a2^{j'}) - 1) 1_{\{W^{(34)}_{n}(a2^{j'}) > r \}} }
$$
$$
= {\Bbb E }\Big[(W^{(12)}_{n}(a2^j) - 1) (W^{(34)}_{n}(a2^{j'}) - 1)\cdot \Big(1_{\{W^{(12)}_{n}(a2^j) > r \}} 1_{\{W^{(34)}_{n}(a2^{j'}) \leq r \}}
$$
\begin{equation}\label{e:E(Wnj-1)(Wnj'-1)}
+ 1_{\{W^{(12)}_{n}(a2^j) \leq r\}} 1_{\{W^{(34)}_{n}(a2^{j'}) > r \}} + 1_{\{W^{(12)}_{n}(a2^j) \leq r \}} 1_{\{W^{(34)}_{n}(a2^{j'}) \leq r \}}\Big) \Big].
\end{equation}
Define
\begin{equation}\label{e:h*_hsub*}
h^* = \max_{q_1,q_2=1,2,3,4}h_{q_1q_2}, \quad h_* = \min_{q_1,q_2=1,2,3,4}h_{q_1q_2}.
\end{equation}
For $0 < \xi' < 1$, by the Cauchy-Schwarz inequality, expression \eqref{e:E(Wnj-1)(Wnj'-1)^2_no_indicators} (from Lemma \ref{l:third_order_cross_term=O(1/n^2)}) and Lemma \ref{l:P(WN<e)->0}, the first term on the right-hand side of \eqref{e:E(Wnj-1)(Wnj'-1)} is bounded by
$$
\Big|\EE{ (W^{(12)}_{n}(a2^j) - 1) 1_{\{W^{(12)}_{n}(a2^j) > r \}} (W^{(34)}_{n}(a2^{j'}) - 1) 1_{\{W^{(34)}_{n}(a2^{j'}) \leq r \}} }\Big|
$$
$$
\leq a^{\delta_{12}+\delta_{34}}\sqrt{\EE{ \Big( \frac{W^{(12)}_{n}(a2^j) - 1}{a^{\delta_{12}}} \Big)^2 \Big( \frac{W^{(34)}_{n}(a2^{j'}) - 1}{a^{\delta_{34}}} \Big)^2}} \sqrt{P(W^{(34)}_{n}(a2^j) \leq r)}
$$
$$
\leq C \Big(\frac{a^{4 h_{\max} - 4 h_{\min}+1}}{n}\Big) \exp\Big\{-\frac{1}{2}\Big(\frac{n}{a^{2(h_{3}+h_{4}) - 4 h_{34}+1}} \Big)^{1- \xi'} \Big\}
$$
$$
= o\Big( \frac{a^{2\max\{h_{13}+h_{24},h_{12}+h_{23}\} - 2 (h_{12}+h_{34})}}{n_*}\Big),
$$
where the constant $C$ does not depend on $r$ and the last equality is a consequence of \eqref{e:assumption_a(n)_n}. Similar bounds hold for the remaining terms on the right-hand side of \eqref{e:E(Wnj-1)(Wnj'-1)}. Therefore, the expression \eqref{e:E(Wnj-1)(Wnj'-1)^2} follows. $\Box$\\
\end{proof}

The following lemma describes the decay rate of the first individual truncated moment of the wavelet variance.
\begin{lemma}\label{l:order_first_moment_1(Wn>0)}
For any $0 < \xi < 1$ and $0 < r < 1/2$,
$$
\max\Big\{\Big| \EE{ \{W^{(12)}_{n}(a(n)2^j) - 1 \} 1_{\{W^{(12)}_{n}(a(n)2^j) \leq r\}} } \Big|; \Big|\EE{\{W^{(12)}_{n}(a(n)2^j) - 1 \} 1_{\{W^{(12)}_{n}(a(n)2^j) > r\}} }\Big|\Big\}
$$
\begin{equation}\label{e:order_first_moment_1(Wn>0)}
= O\Big(\frac{a(n)^{h_{1}+h_{2} - 2 h_{12}+1/2}}{\sqrt{n}}\exp\Big\{-\frac{1}{2}\Big(\frac{n}{a^{2(h_{1}+h_{2}) - 4 h_{12}+1}} \Big)^{1 - \xi}\Big\} \Big).
\end{equation}
\end{lemma}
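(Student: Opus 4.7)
The key structural observation is that the random variable $W^{(12)}_{n}(a(n)2^j)$ is, by construction in \eqref{e:Wn(j)}, the ratio $S^{(12)}_n(a(n)2^j)/\varrho^{(12)}(a(n)2^j)$, so it is automatically centered at $1$:
$$
\EE{W^{(12)}_{n}(a(n)2^j) - 1} = 0.
$$
This immediately gives
$$
\EE{\{W^{(12)}_{n}(a(n)2^j) - 1\} 1_{\{W^{(12)}_{n}(a(n)2^j) \leq r\}}}
= - \EE{\{W^{(12)}_{n}(a(n)2^j) - 1\} 1_{\{W^{(12)}_{n}(a(n)2^j) > r\}}},
$$
so the two terms inside the maximum on the left-hand side of \eqref{e:order_first_moment_1(Wn>0)} have the same absolute value, and it suffices to bound, say, the first one.

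The plan is to apply the Cauchy--Schwarz inequality
$$
\Big| \EE{ \{W^{(12)}_{n}(a(n)2^j) - 1 \} 1_{\{W^{(12)}_{n}(a(n)2^j) \leq r\}} } \Big|
\leq \sqrt{\VAR{W^{(12)}_{n}(a(n)2^j)}} \hspace{0.5mm}\sqrt{P(W^{(12)}_{n}(a(n)2^j) \leq r)},
$$
and then estimate each factor separately. For the variance factor, I would invoke Proposition~\ref{p:4th_moments_wavecoef}, $(v)$, with $(q_1,q_2,q_3,q_4) = (1,2,1,2)$ and $j' = j$, which yields the asymptotic equivalence
$$
\VAR{W^{(12)}_{n}(a(n)2^j)} \sim C\hspace{0.5mm} \frac{a(n)^{2\delta_{12}}}{n_{a,j}} = C\hspace{0.5mm} \frac{a(n)^{2(h_1 + h_2 - 2h_{12})+1}\hspace{0.5mm}2^j}{n},
$$
using $\delta_{12} = h_1 + h_2 - 2 h_{12}$ and $n_{a,j} = n/(a(n)2^j)$. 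Taking a square root gives a contribution of order $a(n)^{h_1+h_2-2h_{12}+1/2}/\sqrt{n}$, which matches the polynomial factor in \eqref{e:order_first_moment_1(Wn>0)}. For the probability factor, I would apply Lemma~\ref{l:P(WN<e)->0} directly, which (for $0 < r < 1/2$ and any $0 < \xi < 1$) yields
$$
\sqrt{P(W^{(12)}_{n}(a(n)2^j) \leq r)} \leq \exp\Big\{ -\frac{1}{2}\Big(\frac{n}{a(n)^{2(h_{1}+h_{2}) - 4 h_{12}+1}} \Big)^{1 - \xi} \Big\}.
$$
Multiplying the two bounds produces precisely the right-hand side of \eqref{e:order_first_moment_1(Wn>0)}.

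There is no real obstacle in this argument: the mean-zero identity eliminates the need to handle the two truncation sides separately, Cauchy--Schwarz reduces the problem to a second moment and a tail probability, and both of those have already been established upstream. The only minor subtlety is to keep track of the exponents of $a(n)$ so that the polynomial factor $a(n)^{h_1+h_2-2h_{12}+1/2}/\sqrt{n}$ emerges cleanly from the variance estimate and aligns with the exponent appearing inside the exponential bound from Lemma~\ref{l:P(WN<e)->0}.
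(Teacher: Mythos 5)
Your proposal is correct and follows essentially the same route as the paper: the mean-zero identity reduces the two truncated expectations to one, Cauchy--Schwarz splits it into $\sqrt{\VAR{W^{(12)}_{n}(a(n)2^j)}}$ times $\sqrt{P(W^{(12)}_{n}(a(n)2^j)\leq r)}$, and these are controlled by the variance asymptotics (the paper cites the covariance expansion \eqref{e:Cov(W12,W34)}, of which Proposition~\ref{p:4th_moments_wavecoef}, $(v)$, is the direct consequence you invoke) and by Lemma~\ref{l:P(WN<e)->0}, respectively. Your exponent bookkeeping ($\delta_{12}=h_1+h_2-2h_{12}$, $n_{a,j}=n/(a(n)2^j)$) matches the stated bound.
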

\begin{proof}
Notice that
$$
0 = \EE{ W^{(12)}_{n}(a2^j)   - 1} = \EE{ (W^{(12)}_{n}(a2^j)  - 1) 1_{\{W^{(12)}_{n}(a2^j) > r\}}} + \EE{( W^{(12)}_{n}(a2^j) - 1 ) 1_{\{W^{(12)}_{n}(a2^j) \leq r\}}}.
$$
Hence,
$$
\EE{( W^{(12)}_{n}(a2^j) - 1)  1_{\{W^{(12)}_{n}(a2^j) > r\}}} = - \EE{( W^{(12)}_{n}(a2^j)  - 1 ) 1_{\{W^{(12)}_{n}(a2^j) \leq r\}}}.
$$
However, by the Cauchy-Schwarz inequality,
\begin{equation}\label{e:CS_bound_on_first_moment*1(Wn<0)}
\EE{( W^{(12)}_{n}(a2^j)  - 1) 1_{\{W^{(12)}_{n}(a2^j) \leq r\}}} \Big| \leq \sqrt{ \VAR{\hspace{0.5mm}W^{(12)}_{n}(a2^j)}} \sqrt{P(W^{(12)}_{n}(a2^j) \leq r)}.
\end{equation}
By expressions \eqref{e:CS_bound_on_first_moment*1(Wn<0)}, \eqref{e:Cov(W12,W34)} and Lemma \ref{l:P(WN<e)->0}, the expression \eqref{e:order_first_moment_1(Wn>0)} follows. $\Box$\\
\end{proof}

The following lemma establishes the decay of the covariances between truncated terms \eqref{e:cov_truncated} and indicators involving wavelet variances, or between indicators only.
\begin{lemma}\label{l:Cov(log,1)_Cov(1,1)}
For any $0 < \xi < 1$ and $0 < r < 1/2$,
\begin{itemize}
\item [$(i)$]
$$
\COV{1_{\{|W^{(12)}_{n}(a 2^{j'})| > r\}}}{
1_{\{|W^{(34)}_{n}(a2^{j'})| > r\}}}
$$
\begin{equation}\label{e:Cov(1,1)}
\leq \exp\Big\{ - \frac{1}{2} \Big[\Big( \frac{n}{a^{2(h_{1}+h_{2}) - 4 h_{12}+1}}  \Big)^{1 - \xi}
+ \Big( \frac{n}{a^{2(h_{3}+h_{4}) - 4 h_{34}+1}}  \Big)^{1 - \xi}\Big]\Big\};
\end{equation}
\item [$(ii)$]
$$
\var{\Big[\log |W^{(12)}_n(a(n)2^j)| 1_{\{W^{(12)}_n(a(n)2^j) < - r \}} \Big]}
$$
\begin{equation}\label{e:Var_logW_1(W<-r)}
\leq (C + \log^2(r)) \exp \Big\{- \frac{1}{2}\Big( \frac{n}{a(n)^{2(h_1 + h_2) - 4h_{12}+1}}\Big)^{1 - \xi}\Big\}
\end{equation}
and
$$
\var{ \Big[\log W^{(12)}_n(a(n)2^j) 1_{\{W^{(12)}_n(a(n)2^j) > r \}} \Big]}
$$
\begin{equation}\label{e:Var_logW_1(W>r)}
\leq \log^2(r) \exp \Big\{- \Big( \frac{n}{a(n)^{2(h_1 + h_2) - 4h_{12}+1}}\Big)^{1 - \xi}\Big\} + o(1);
\end{equation}
\item [$(iii)$]
$$
\COV{ \log \hspace{0.5mm} |W^{(12)}_{n}(a 2^j)| \hspace{0.5mm}1_{\{|W^{(1 2)}_{n}(a2^j)| > r\}}}{ 1_{\{|W^{(34)}_{n}(a2^{j'})| > r\}}}
$$
$$
\leq \sqrt{\log^2(r) \exp \Big\{- \Big( \frac{n}{a(n)^{2(h_1 + h_2) - 4h_{12}+1}}\Big)^{1 - \xi} + o(1)}
$$
\begin{equation}\label{e:Cov(log,1)}
\cdot \exp\Big\{ - \frac{1}{2}\Big( \frac{n}{a^{2(h_{3}+h_{4}) - 4 h_{34}+1}} \Big)^{1 - \xi}\Big\}.
\end{equation}
\end{itemize}
In \eqref{e:Var_logW_1(W<-r)}, $C >0$ does not depend on $r$.
\end{lemma}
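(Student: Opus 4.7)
\medskip

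\textbf{Plan of proof.} All three parts reduce to combining the tail bound of Lemma \ref{l:P(WN<e)->0} with standard moment estimates on the quadratic form $W^{(q_1q_2)}_{n}(a(n)2^j)$. The key observations are that $\{|W^{(q_1 q_2)}_{n}(a(n)2^j)| \leq r\} \subseteq \{W^{(q_1 q_2)}_{n}(a(n)2^j) \leq r\}$ (so Lemma \ref{l:P(WN<e)->0} directly bounds the probability of the complement of each indicator event), and that $W^{(q_1 q_2)}_{n}(a(n)2^j) \to 1$ in $L^2$ by \eqref{e:W12->1_in_L2} (which provides the $o(1)$ contribution).

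For part \emph{(i)}, the key identity $\COV{1_A}{1_B}=\COV{1_{A^c}}{1_{B^c}}$ followed by the Cauchy--Schwarz inequality yields $|\COV{1_A}{1_B}|\leq \sqrt{P(A^c)}\sqrt{P(B^c)}$ (since $P(E)(1-P(E))\leq P(E)$). Applying Lemma \ref{l:P(WN<e)->0} separately to each factor with the same parameter $\xi$ produces the product of exponentials, which combines into \eqref{e:Cov(1,1)} after collecting the $-\tfrac12$ factors.

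For part \emph{(ii)}, I bound each variance by the second moment and partition the integration event according to the size of $|W^{(12)}_{n}(a(n)2^j)|$. For \eqref{e:Var_logW_1(W<-r)}, I split $\{W < -r\}$ into $\{r < |W| \leq 1\}$ (where $|\log|W||\leq|\log r|$, so the contribution is at most $\log^2(r)\,P(W\leq -r)$) and $\{|W|>1\}$ (where the inequality $|\log x|\leq |x-1|\leq |x|$ for $x\geq 1$ reduces the estimate, via Cauchy--Schwarz, to $\sqrt{E[W^4]}\sqrt{P(W\leq -r)}$). A uniform bound $E[(W^{(12)}_{n}(a(n)2^j))^4]\leq C$, obtained by extending the Isserlis-based computation in Lemma \ref{l:third_order_cross_term=O(1/n^2)} to the fourth centered moment, together with Lemma \ref{l:P(WN<e)->0} applied at $r_0 = -r < 1/2$, gives the stated rate. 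For \eqref{e:Var_logW_1(W>r)}, I pick any $r_0\in(r,1/2)$ and split $\{W>r\}$ into $\{r<W\leq r_0\}$ (where again $|\log W|\leq |\log r|$ and Lemma \ref{l:P(WN<e)->0} supplies the exponential decay) and $\{W>r_0\}$ (where the Lipschitz bound $|\log x|\leq C(r_0)|x-1|$ valid for $x\geq r_0$, combined with $\text{Var}[W^{(12)}_{n}(a(n)2^j)]\to 0$ from Proposition \ref{p:4th_moments_wavecoef}(v), yields the $o(1)$ term).

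Part \emph{(iii)} then follows directly from Cauchy--Schwarz applied to the covariance, which produces $\sqrt{\text{Var}[X]}\sqrt{\text{Var}[1_B]}\leq \sqrt{\text{Var}[X]}\sqrt{P(B^c)}$; the first factor is controlled by the bound from \eqref{e:Var_logW_1(W>r)} (absorbing the subdominant $\{W<-r\}$ contribution into the $o(1)$ term using the faster decay rate from \eqref{e:Var_logW_1(W<-r)}), and the second factor is controlled by Lemma \ref{l:P(WN<e)->0}. The main technical hurdle is verifying the uniform fourth-moment bound $\sup_n E[(W^{(12)}_{n}(a(n)2^j))^4]<\infty$, which is needed for the Cauchy--Schwarz step on $\{|W|>1\}$ in part \emph{(ii)}; this is established by mimicking the Isserlis-expansion argument of Lemma \ref{l:third_order_cross_term=O(1/n^2)} together with the cross-moment bounds from Lemma \ref{l:E(Wnj-1)(Wnj'-1)}, noting that the resulting sums converge thanks to the decorrelation bound \eqref{e:|Xi-Phi|=<1/r2}.
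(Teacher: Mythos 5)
Your proof is correct and follows essentially the same route as the paper's: Cauchy--Schwarz combined with the exponential tail bound of Lemma \ref{l:P(WN<e)->0} for the indicator events, plus truncated moment estimates for the logarithmic factors split between a ``small $|W|$'' region (where $\log^2$ is controlled by $\log^2(r)$) and a ``large $|W|$'' region. The only immaterial differences are that you split at $|W|=1$ rather than $1/2$, invoke a uniform fourth-moment bound (obtainable from the Isserlis computation of Lemma \ref{l:third_order_cross_term=O(1/n^2)}, as you note) where the paper gets by with $\log^2|W|\,1_{\{|W|\ge 1/2\}}\le C|W|$ and a second-moment Cauchy--Schwarz, and replace the paper's dominated-convergence step for the $o(1)$ term in \eqref{e:Var_logW_1(W>r)} by the Lipschitz bound $|\log x|\le r_0^{-1}|x-1|$ on $\{W>r_0\}$ together with $\Var[W^{(12)}_n(a(n)2^j)]\to 0$.
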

\begin{proof}
We first show \eqref{e:Cov(1,1)}. By the Cauchy-Schwarz inequality, the left-hand side of \eqref{e:Cov(1,1)} is bounded from above by
\begin{equation}\label{e:sqrt(Var1*Var1)}
\sqrt{\VAR{1_{\{|W^{(1 2)}_{n}(a2^j)| > r\}}}} \sqrt{ \VAR{1_{\{|W^{(3 4)}_{n}(a2^{j'})| > r\}} }}.
\end{equation}
Moreover, for $0 < \xi < 1$ and the octave $j$, Lemma \ref{l:P(WN<e)->0} implies that
$$
\VAR{1_{\{|W^{(1 2)}_{n}(a2^j)| > r\}} } = P(|W^{(12)}_{n}(a2^j)| > r) P(|W^{(1 2)}_{n}(a2^j)| \leq r)
$$
\begin{equation}\label{e:bound_Var(1)}
\leq \exp\Big\{ - \Big( \frac{n}{a^{2(h_{1}+h_{2}) - 4 h_{12}+1}} \Big)^{1 - \xi}\Big\}.
\end{equation}
The same bound holds for the octave $j'$ in \eqref{e:sqrt(Var1*Var1)}. Thus, \eqref{e:Cov(1,1)} holds.

To prove \eqref{e:Var_logW_1(W<-r)}, note that $\VAR{\log |W^{(12)}_n(a2^j)| 1_{\{W^{(12)}_n(a2^j) < - r \}}}$ is bounded by
$$
\EE{ \log^2 |W^{(12)}_n(a2^j)| 1_{\{W^{(1 2)}_{n}(a2^j) < - r\}}}
$$
$$
= \EE{\log^2 |W^{(12)}_n(a2^j)|\Big(1_{\{W^{(1 2)}_{n}(a2^j)\leq - 1/2\}} + 1_{\{-1/2 < W^{(1 2)}_{n}(a2^j) < - r\}}\Big)}
$$
$$
\leq C \EE{|W^{(12)}_n(a2^j)| 1_{\{W^{(1 2)}_{n}(a2^j)\leq - 1/2\}}} + \log^2(r) P(-1/2 < W^{(1 2)}_{n}(a2^j) < - r)
$$
$$
\leq C \sqrt{\EE{W^{(12)}_n(a2^j)^2} P(W^{(1 2)}_{n}(a2^j)\leq - 1/2)} + \log^2(r) P(-1/2 < W^{(1 2)}_{n}(a2^j) < - r)
$$
$$
\leq C' \exp \Big\{ -\frac{1}{2}\Big( \frac{n}{a^{2(h_1 + h_2) - 4h_{12}+1}}\Big)^{1 - \xi}\Big\}
+ \log^2(r) \exp \Big\{ -\Big( \frac{n}{a^{2(h_1 + h_2) - 4h_{12}+1}}\Big)^{1 - \xi}\Big\}.
$$
This establishes \eqref{e:Var_logW_1(W<-r)}.

To prove \eqref{e:Var_logW_1(W>r)}, note that $\VAR{\log W^{(12)}_n(a2^j) 1_{\{W^{(12)}_n(a2^j) > r \}}}$ is bounded by
$$
\EE{ \log^2 |W^{(12)}_n(a2^j)| 1_{\{W^{(1 2)}_{n}(a2^j) > r\}}}
$$
$$
= \EE{\log^2 |W^{(12)}_n(a2^j)|\Big(1_{\{r < W^{(1 2)}_{n}(a2^j) < 1/2\}}+ 1_{\{W^{(1 2)}_{n}(a2^j)\geq 1/2\}}\Big)}
$$
$$
\leq \log^2(r) P(r < W^{(1 2)}_{n}(a2^j) < 1/2) + \EE{\log^2 |W^{(12)}_n(a2^j)| 1_{\{W^{(1 2)}_{n}(a2^j)\geq 1/2\}}}.
$$
However, for some $C > 0$,
$$
\log^2 |W^{(12)}_n(a2^j)| 1_{\{W^{(1 2)}_{n}(a2^j)\geq  1/2\}} \leq C W^{(12)}_n(a2^j)^2,
$$
where, by \eqref{e:W12->1_in_L2}, $W^{(12)}_n(a2^j) \stackrel{P}\rightarrow 1$ and
$$
\EE{W^{(12)}_n(a2^j)^2} = \var{W^{(12)}_n(a2^j)} + 1 \rightarrow 1, \quad n \rightarrow \infty.
$$
Therefore, by the dominated convergence theorem for convergence in probability,
$$
\lim_{n \rightarrow \infty}\EE{\log^2 |W^{(12)}_n(a2^j)| 1_{\{W^{(1 2)}_{n}(a2^j)\geq  1/2\}}} = 0.
$$
This establishes \eqref{e:Var_logW_1(W>r)}.

To show \eqref{e:Cov(log,1)}, again by applying the Cauchy-Schwarz inequality, the left-hand side of \eqref{e:Cov(log,1)} is bounded from above by
\begin{equation}\label{e:sqrt(Var)*sqrt(Var_1)}
\sqrt{\VAR{\log|W^{(12)}_n(a2^j)|1_{\{|W^{(1 2)}_{n}(a2^j)| > r\}} }}\sqrt{\VAR{1_{\{|W^{(34)}_{n}(a2^{j'})| > r\}} }}.
\end{equation}
Hence, the bound follows from \eqref{e:Cov(1,1)} and \eqref{e:Var_logW_1(W>r)}. $\Box$\\
\end{proof}

We are now in a position to establish Theorem \ref{t:Cov(log,log)_cross}.\\

\noindent {\sc Proof of Theorem~\ref{t:Cov(log,log)_cross}}: Fix $0 < \xi < 1$ and recall that $n_* = \frac{n}{a2^{j+j'}}$. Then,
$$
\COV{ \log |S^{(12)}_{n}(a2^j)| \hspace{0.5mm}1_{\{|W^{(12)}_{n}(a2^j)| > r_n\}} }{ \log |S^{(34)}_{n}(a2^{j'})| \hspace{0.5mm} 1_{\{|W^{(34)}_{n}(a2^{j'})| > r_n\}}}
$$
$$
= \COV{\log |W^{(12)}_{n}(a2^j)| \hspace{0.5mm} 1_{\{|W^{(12)}_{n}(a2^j)| > r_n\}} }{ \log |W^{(34)}_{n}(a2^{j'})|\hspace{0.5mm} 1_{\{|W^{(34)}_{n}(a2^{j'})| > r_n\}}}
$$
$$
+ \log \big|\EE{d_{3}(a2^{j},0)d_{4}(a2^{j'},0)}\big|\hspace{0.5mm} \COV{ \log |W^{(12)}_{n}(a 2^j)|\hspace{0.5mm} 1_{\{|W^{(12)}_{n}(a2^j)| > r_n\}} }{ 1_{\{|W^{(34)}_{n}(a2^{j'})| > r_n\}}}
$$
$$
+ \log \big|\EE{d_{1}(a2^{j},0)d_{2}(a2^{j},0)}\big| \hspace{0.5mm} \COV{ 1_{\{|W^{(12)}_{n}(a 2^{j})| > r_n\}} }{ \log |W^{(34)}_{n}(a2^{j'})| \hspace{0.5mm} 1_{\{|W^{(34)}_{n}(a2^{j'})| > r_n\}}}
$$
$$
+ \log \big|\EE{d_{1}(a2^{j},0)d_{2}(a2^{j},0)}\big| \log \big|\EE{d_{3}(a2^{j'},0)d_{4}(a2^{j'},0)}\big|
$$
$$
\COV{ 1_{\{|W^{(12)}_{n}(a 2^{j})| > r_n\}} }{ 1_{\{|W^{(34)}_{n}(a2^{j'})| > r_n\}}}
$$
$$
 = \COV{ \log |W^{(12)}_{n}(a2^j)| 1_{\{|W^{(12)}_{n}(a2^j)| > r_n\}} }{ \log |W^{(34)}_{n}(a(n)2^{j'})| 1_{\{|W^{(34)}_{n}(a(n)2^{j'})| > r_n\}} }
 $$
\begin{equation}\label{e:Cov(logW,logW)}
+ o\Big( \Big(\frac{a(n)^{4 h_{\max} - 4 h_{\min}}}{n_*}\Big)^2\Big).
\end{equation}
The last two equalities in \eqref{e:Cov(logW,logW)} are a consequence of Lemma \ref{l:cov_and_eigenstructure_wavelet_transf_and_var}, $(ii)$, as applied to $\EE{d_{1}(a2^{j},0)d_{2}(a2^{j},0)}$ and $\EE{d_{3}(a2^{j'},0)d_{4}(a2^{j'},0)}$, and of the bound \eqref{e:Cov(log,1)} (from Lemma \ref{l:Cov(log,1)_Cov(1,1)}, $(iii)$) under the condition \eqref{e:rn->0}.

Therefore, it suffices to show that the main term on the right-hand side of \eqref{e:Cov(logW,logW)} is equal to the main term on the right-hand side of \eqref{e:Cov(log,log)_cross}. By accounting for absolute values, the covariance term in the former can be broken up into a sum of four terms, namely,
\begin{equation}\label{e:Cov(logW12,logW34)}
\COV{ \log W^{(12)}_{n}(a2^j)1_{\{W^{(12)}_{n}(a2^j) > r_n\}} }{ \log W^{(34)}_{n}(a2^{j'})1_{\{W^{(34)}_{n}(a2^{j'}) > r_n\}} }
\end{equation}
plus the remainder
$$
\COV{ \log W^{(12)}_{n}(a2^j) \hspace{0.5mm} 1_{\{W^{(12)}_{n}(a2^j) > r_n\}} }{ \log |W^{(34)}_{n}(a2^j)| \hspace{0.5mm} 1_{\{W^{(34)}_{n}(a2^j) < - r_n\}} }
$$
$$
+ \COV{ \log |W^{(12)}_{n}(a2^j)| \hspace{0.5mm} 1_{\{W^{(12)}_{n}(a2^j) < -r_n\}} }{ \log W^{(34)}_{n}(a2^j) \hspace{0.5mm} 1_{\{W^{(34)}_{n}(a2^j) > r_n\}} }
$$
\begin{equation}\label{e:proof_theo_CI_residual_sum_cov}
+ \COV{ \log |W^{(12)}_{n}(a2^j)| \hspace{0.5mm} 1_{\{W^{(12)}_{n}(a2^j) < -r_n\}} }{ \log |W^{(34)}_{n}(a2^j)| \hspace{0.5mm} 1_{\{W^{(34)}_{n}(a2^j) < - r_n\}} }
\end{equation}
By the Cauchy-Schwarz inequality, the bounds \eqref{e:Var_logW_1(W<-r)} and \eqref{e:Var_logW_1(W>r)} (from Lemma \ref{l:Cov(log,1)_Cov(1,1)}, $(ii)$) and condition \eqref{e:rn->0}, the absolute value of the second term in the sum \eqref{e:proof_theo_CI_residual_sum_cov} is bounded by
$$
\sqrt{\VAR{\log |W^{(12)}_n(a2^j)|1_{\{W^{(12)}_n(a2^j)< - r_n\}}}\VAR{ \log W^{(34)}_n(a2^j) 1_{\{W^{(34)}_n(a2^j)>  r_n\}}}}
$$
$$
= o\Big( \Big(\frac{a(n)^{4 h_{\max} - 4 h_{\min}}}{n_*}\Big)^2\Big).
$$
By a similar argument, the same bound holds for the remaining terms in the sum \eqref{e:proof_theo_CI_residual_sum_cov}.
Thus, it suffices to focus on \eqref{e:Cov(logW12,logW34)}. In the following developments, expressions involving individual sample wavelet variance terms will be expressed in terms of $W^{(12)}_n(a2^j)$, but analogous expressions hold when substituting $W^{(34)}_n(a2^{j'})$ for $W^{(12)}_n(a2^j)$.

Fix $0< \xi< 1$. For any given $j $, write out the almost sure Taylor expansion
$$
\log W^{(12)}_{n}(a2^j) \hspace{0.5mm} 1_{\{W^{(12)}_{n}(a2^j) > r_n\}}
$$
\begin{equation}\label{e:Taylor}
= \Big\{ (W^{(12)}_{n}(a2^j) - 1 ) - \frac{1}{2} \Big(\frac{W^{(12)}_{n}(a2^j)- 1 }{\theta^2_+(W^{(12)}_{n}(a2^j))}\Big)^2\Big\} 1_{\{W^{(12)}_{n}(a2^j) > r_n\}},
\end{equation}
where $\theta_+(W^{(12)}_{n}(a2^j)) \in [\min\{W^{(12)}_{n}(a2^j),1\},\max\{W^{(12)}_{n}(a2^j),1\}]$. Then,
$$
\EE{\log W^{(12)}_{n}(a2^j) \hspace{1mm} \log W^{(34)}_{n}(a2^{j'}) \hspace{1mm} 1_{\min\{\{W^{(12)}_{n}(a2^{j}),W^{(34)}_{n}(a2^{j'})\} > r_n\}}}
$$
$$
=  \EE{(W^{(12)}_{n}(a2^j) - 1 )(W^{(34)}_{n}(a2^{j'}) - 1 ) 1_{\{\min\{W^{(12)}_{n}(a2^{j}),W^{(34)}_{n}(a2^{j'})\} > r_n\}}}
$$
$$
- \frac{1}{2}\EE{(W^{(12)}_{n}(a2^j) - 1 )\Big(\frac{W^{(34)}_{n}(a2^{j'})- 1 }{\theta_+(W^{(34)}_{n}(a2^{j'}))}\Big)^2 1_{\{\min\{W^{(12)}_{n}(a2^{j}),W^{(34)}_{n}(a2^{j'})\} > r_n\}}}
$$
$$
- \frac{1}{2} \EE{\Big(\frac{W^{(12)}_{n}(a2^j)- 1}{\theta_+(W^{(12)}_{n}(a2^j))}  \Big)^2 (W^{(34)}_{n}(a2^{j'}) - 1 )\hspace{1mm}1_{\{\min\{W^{(12)}_{n}(a2^{j}),W^{(34)}_{n}(a2^{j'})\} > r_n\}} }
$$
\begin{equation}\label{e:main_proof_basic_expression_for_the_cross_moment}
+ \frac{1}{4} \EE{ \Big(\frac{W^{(12)}_{n}(a2^j)- 1 }{\theta_+(W^{(12)}_{n}(a2^{j}))}\Big)^2 \Big(\frac{W^{(34)}_{n}(a2^{j'})- 1 }{\theta_+(W^{(34)}_{n}(a2^{j'}))}\Big)^2 1_{\{\min\{W^{(12)}_{n}(a2^{j}),W^{(34)}_{n}(a2^{j'})\} > r_n\}} }.
\end{equation}
For $0 < r_n < 1/2$, recast
$$
\Big(\frac{W^{(12)}_{n}(a2^j) - 1}{\widehat{\theta}_+(W^{(12)}_{n}(a2^j))} \Big)^2  1_{\{W^{(12)}_{n}(a2^j) > r_n \}}
$$
$$
= \Big(\frac{W^{(12)}_{n}(a2^j) - 1}{\widehat{\theta}_+(W^{(12)}_{n}(a2^j))} \Big)^2  \Big(1_{\{r_n < W^{(12)}_{n}(a2^j) < 1/2\}}
+ 1_{\{W^{(12)}_{n}(a2^j) \geq 1/2\}}\Big)
$$
\begin{equation}\label{e:bound_remainder_+}
\leq \Big(\frac{W^{(12)}_{n}(a2^j) - 1}{r_n} \Big)^2   1_{\{r_n < W^{(12)}_{n}(a2^j)  < 1/2\}}
+ \Big(\frac{W^{(12)}_{n}(a2^j) - 1}{1/2} \Big)^2 1_{\{W^{(12)}_{n}(a2^j) \geq 1/2\}}.
\end{equation}
Therefore, we can rewrite the fourth term in \eqref{e:main_proof_basic_expression_for_the_cross_moment} as
$$
\EE{\Big|\Big(\frac{W^{(12)}_{n}(a2^{j})- 1}{\theta_+(W^{(12)}_{n}(a2^{j}))} \Big)^{2}\Big(\frac{W^{(34)}_{n}(a2^{j})- 1}{\theta_+(W^{(34)}_{n}(a2^{j}))} \Big)^{2}1_{\{\min\{W^{(12)}_{n}(a2^j),W^{(34)}_{n}(a2^{j'})\} > r_n\}}\Big|}
$$
$$
\leq  \frac{1}{r^4_n}  \EE{(W^{(12)}_{n}(a2^j) - 1 )^{2}1_{\{r_n < W^{(12)}_{n}(a2^j) < 1/2\}} (W^{(34)}_{n}(a2^{j'})- 1 )^{2}1_{\{r_n < W^{(34)}_{n}(a2^j) < 1/2\}}}
$$
$$
+ \frac{1}{(r_n/2)^2} \EE{(W^{(12)}_{n}(a2^j) - 1 )^{2} 1_{\{W^{(12)}_{n}(a2^j) \geq 1/2\}}(W^{(34)}_{n}(a2^{j'})- 1 )^{2}1_{\{r_n < W^{(34)}_{n}(a2^j) < 1/2\}}}
$$
$$
+ \frac{1}{(r_n/2)^2} \EE{(W^{(12)}_{n}(a2^j) - 1 )^{2} 1_{\{r_n < W^{(12)}_{n}(a2^j) < 1/2\}}(W^{(34)}_{n}(a2^{j'})- 1 )^{2}1_{\{W^{(34)}_{n}(a2^j) \geq 1/2\}}}
$$
\begin{equation}\label{e:main_proof_product_fourth_order_terms}
+ \frac{1}{(1/2)^4} \EE{(W^{(12)}_{n}(a2^j) - 1 )^{2} 1_{\{W^{(12)}_{n}(a2^j) \geq 1/2\}}(W^{(34)}_{n}(a2^{j'})- 1 )^{2}1_{\{W^{(34)}_{n}(a2^{j'}) \geq 1/2\}}}.
\end{equation}
By Lemma \ref{l:third_order_cross_term=O(1/n^2)}, the fourth term term in the sum is bounded by
\begin{equation}\label{e:main_proof_4th_term}
O\Big( \Big( \frac{a^{4 h_{\max} - 4 h_{\min}+1}}{n}\Big)^2\Big).
\end{equation}
By the Cauchy-Schwarz inequality, Lemma \ref{l:third_order_cross_term=O(1/n^2)} and condition \eqref{e:rn->0}, the first term in the sum \eqref{e:main_proof_product_fourth_order_terms} is bounded by
$$
\frac{a^{2 (\delta_{12}+\delta_{34})}}{r^4_n}  \sqrt{\EE{\Big( \frac{W^{(12)}_{n}(a2^j) - 1}{a^{\delta_{12}}} \Big)^{4} \Big( \frac{W^{(34)}_{n}(a2^{j'})- 1}{a^{\delta_{34}}} \Big)^{4}}}
$$ 
$$
\cdot \sqrt{ \EE{1_{\{r_n < W^{(12)}_{n}(a2^j) < 1/2\}}1_{\{r_n < W^{(34)}_{n}(a2^j) < 1/2\}} }  }
$$
$$
\leq \frac{a^{2 (\delta_{12}+\delta_{34})}}{r^4_n}  O\Big( \frac{a}{n}\Big) \sqrt{P(r_n < W^{(12)}_{n}(a2^j) < 1/2)P(r_n < W^{(34)}_{n}(a2^j) < 1/2)}
$$
$$
\leq \frac{a^{2 (\delta_{12}+\delta_{34})}}{r^4_n}  O\Big( \frac{a}{n}\Big) \exp\Big\{ - \frac{1}{2}\Big[\Big( \frac{n}{a^{2(h_1 + h_2) - 4 h_{12}+1}}\Big)^{1 - \xi}+\Big( \frac{n}{a^{2(h_3 + h_4) - 4 h_{34}+1}}\Big)^{1 - \xi}\Big]\Big\}
$$
\begin{equation}\label{e:main_proof_1st_term}
= O\Big( \Big( \frac{a^{4 h_{\max} - 4 h_{\min}+1}}{n}\Big)^2\Big),
\end{equation}
since $2 (\delta_{12}+\delta_{34}) \leq 4 h_{\max} - 4 h_{\min}$. The second term in the sum \eqref{e:main_proof_product_fourth_order_terms} is bounded by
$$
\frac{C a^{2 (\delta_{12}+\delta_{34})}}{r^2_n} \sqrt{\EE{\Big( \frac{W^{(12)}_{n}(a2^j) - 1}{a^{\delta_{12}}} \Big)^{4} \Big(\frac{W^{(34)}_{n}(a2^{j'})- 1 }{a^{\delta_{34}}}\Big)^{4}}}
$$
$$ 
\cdot \sqrt{ \EE{1_{\{W^{(12)}_{n}(a2^j) \geq 1/2\}}1_{\{r_n < W^{(34)}_{n}(a2^j) < 1/2\}}}}
$$
$$
\leq \frac{Ca^{2 (\delta_{12}+\delta_{34})}}{r^2_n} O\Big( \frac{a}{n}\Big) \sqrt{P(W^{(12)}_{n}(a2^j) \geq 1/2)P(r_n < W^{(34)}_{n}(a2^j) < 1/2)}
$$
\begin{equation}\label{e:main_proof_2nd_term}
\leq \frac{Ca^{2 (\delta_{12}+\delta_{34})}}{r^2_n} O\Big( \frac{a}{n}\Big) \exp\Big\{ - \frac{1}{2}\Big( \frac{n}{a^{2(h_3 + h_4) - 4 h_{34}+1}}\Big)^{1 - \xi}\Big\}
= O\Big( \Big( \frac{a^{4 h_{\max} - 4 h_{\min}+1}}{n}\Big)^2\Big).
\end{equation}
An analogous bound holds for the third term in the sum \eqref{e:main_proof_product_fourth_order_terms}. Therefore, by \eqref{e:main_proof_basic_expression_for_the_cross_moment}, \eqref{e:main_proof_4th_term}, \eqref{e:main_proof_1st_term}, \eqref{e:main_proof_2nd_term} and Lemma \ref{l:E(Wnj-1)(Wnj'-1)}, we conclude that \eqref{e:Cov(logW,logW)} is equal to the right-hand side of \eqref{e:Cov(log,log)_cross}, as claimed. $\Box$\\

\begin{remark}\label{r:q=q1=q2=>trunc_is_unnecessary}
For $q = q_1 = q_2$, $W^{(qq)}_{n,-}(a2^j) = 0$ a.s. (see \eqref{e:sum_W=sum_lambda*Z2-lambda*Z2}). Then, the existence of the moment $\EE{\log^l |W^{(qq)}_n(a2^j)|}$, $l \in \bbN$, can be directly established by applying relation (96) in \cite{moulines:roueff:taqqu:2007:spectral}. Moreover, the analysis of moments in this section can be extended without the truncation based on the sequence \eqref{e:rn->0}.
\end{remark}

\end{document}